\theoremstyle{plain}
\newtheorem{thm}{Theorem}[section]
\numberwithin{equation}{section} 
\numberwithin{figure}{section} 
\theoremstyle{plain}
\newtheorem{cor}[thm]{Corollary} 
\theoremstyle{plain}
\newtheorem{claim}[thm]{Claim} 
\theoremstyle{definition}
\newtheorem{defn}[thm]{Definition}
\theoremstyle{definition}
\newtheorem{rem}[thm]{Remark}
\theoremstyle{plain}
\newtheorem{lem}[thm]{Lemma} 
\theoremstyle{plain}
\newtheorem{prop}[thm]{Proposition} 
\theoremstyle{plain}
\newtheorem{fact}[thm]{Fact}
\theoremstyle{plain}
\theoremstyle{plain}
\newtheorem{example}[thm]{Example}
\newtheorem{ques}[thm]{Question}
\newtheorem{obse}[thm]{Observation}
\newtheorem{theo}[thm]{Theorem}
\newcommand{\Qu}{\mathbb{Q}}
\newcommand{\Na}{\mathbb{N}}	
\newcommand{\Ce}{\mathbb{C}}
\newcommand{\Ao}{\aleph_0}
\providecommand{\norm}[1]{\|#1\|}
\providecommand{\Norm}[1]{\left\|#1\right\|}
\providecommand{\parc}[1]{\left(#1\right)}
\providecommand{\parl}[1]{\left\{#1\right\}}
\providecommand{\Parl}[1]{\lbrace#1\rbrace}
\providecommand{\gen}[1]{\langle#1\rangle}
\newcommand{\Lim}[2]{\lim\limits_{{#1}\to {#2}}}
\newcommand{\Sum}[2]{\displaystyle\sum_{#1}^{#2}}
\newcommand\redsout{\bgroup\markoverwith{\textcolor{red}{\rule[0.5ex]{2pt}{1.5pt}}}\ULon}
\newcommand{\forkindep}[1][]{%
  \mathrel{
    \mathop{
      \vcenter{
        \hbox{\oalign{\noalign{\kern-.3ex}\hfil$\vert^*$\cr
              \noalign{\kern-.7ex}
              $\smile$\cr\noalign{\kern-.3ex}}}
      }
    }\displaylimits_{#1}
  }
}
\newcommand{\quotient}[2]{{\left.\raisebox{.1em}{$#1$}/\raisebox{-.1em}{$#2$}\right.}}
\@undefined\usepackage[usenames,dvips]{color}
\def \<{\langle}
\def \>{\rangle}
\def\Ind#1#2{#1\setbox0=\hbox{$#1x$}\kern\wd0\hbox to 0pt{\hss$#1\mid$\hss}
\lower.9\ht0\hbox to 0pt{\hss$#1\smile$\hss}\kern\wd0}
\def\Notind#1#2{#1\setbox0=\hbox{$#1x$}\kern\wd0\hbox to 0pt{\mathchardef
\nn=12854\hss$#1\nn$\kern1.4\wd0\hss}\hbox to
0pt{\hss$#1\mid$\hss}\lower.9\ht0 \hbox to
0pt{\hss$#1\smile$\hss}\kern\wd0}
\newcommand{\acl}{\operatorname{acl}}
\newcommand{\tp}{\operatorname{tp}}
\newcommand{\qftp}{\operatorname{qftp}}
\newcommand{\dcl}{\operatorname{dcl}}
\newcommand{\cl}{\operatorname{cl}}
\begin{document}

\title{Model theory of Hilbert spaces with a discrete group action}

 \date{\today}

\author{Alexander Berenstein}
\address{Universidad de los Andes,
Cra 1 No 18A-12, Bogot\'{a}, Colombia} \urladdr{www.matematicas.uniandes.edu.co/\textasciitilde aberenst}

\author{Juan Manuel P\'erez}
\address{Universidad de los Andes,
Cra 1 No 18A-12, Bogot\'{a}, Colombia}
\email{jm.perezo@uniandes.edu.co}


\thanks{\hspace{-0.4cm}2020 Mathematics Subject Classification. 03C66, 03C45, 47D03 , 47C15.
\\ 
Key words and phrases.  Hilbert spaces, representation theory, $C^*$-algebras, stability theory, Belle Paires, classification theory, perturbations. 
\\ The authors would like to thank Tom\'as Ibarluc\'ia and Xavier Caicedo for valuable feedback as well as the referee for helping us improve the presentation of the results.}

\begin{abstract}
In this paper we study expansions of infinite dimensional Hilbert spaces with a unitary representation of a discrete countable group. When the group is finite, we prove the theory of the corresponding expansion, regardless if it is existentially closed, has quantifier elimination, is $\aleph_0$-categorical, $\aleph_0$-stable and SFB. On the other hand, when the group involved is countably infinite, the theory of the Hilbert space expanded by the representation of this group is $\aleph_0$-categorical up to perturbations. Additionally, when the expansion is model complete, we prove that it is $\aleph_0$-stable up to perturbations.

 
\end{abstract}

\pagestyle{plain}

\maketitle

\section{Introduction}

In this paper we work on model theoretic aspects of the expansion of a Hilbert space by a unitary representation of a countable discrete group. A \textit{unitary representation} of a group $G$ in a Hilbert space $H$ is defined as an action of $G$ on $H$ by elements of the group of unitary maps, denoted by $U(H)$. In other words, a representation is given by a homomorphism $\pi : G \longrightarrow U(H)$, where the action of $g \in G$ on $v \in H$ is denoted by $\pi(g)v$. 

We treat Hilbert spaces as continuous structures in the language $\mathcal{L} = \{0,-,\dot2, \frac{x+y}{2},\\ e^{i\theta}: \theta \in 2\pi\Qu\}$, which allows to axiomatize Hilbert spaces as a universal theory (the proof is a small modification of the argument in \cite{itai2}, the proof in \cite{itai2} deals with real Hilbert spaces where one omits the family $\{e^{i\theta}: \theta \in 2\pi\Qu\}$ and for the complex case, one usually only includes $i=e^{i\pi/2}$ ) and in this language the theory has quantifier elimination. To deal with expansions by a group of unitary maps, we add a unary function symbol for each element $g$ of the group and we interpret it as $\pi(g)$. 

There are several papers that deal with similar expansions. For instance, expansions of a Hilbert space with a single automorphism were studied in \cite{zadka}, showing that the existentially closed models correspond to expansions by a unitary map with spectrum $S^1$. Moreover, it is also proved that the expansion is superstable but not $\Ao$-stable. In \cite{AlexHGGA} it is proved that if $G$ is \textit{amenable} and countable, then a Hilbert space $H$ expanded by a countable number of copies of the left regular representation of $G$ is existentially closed. It is also proved that this class of expansions has a model companion which is existentially axiomatizable. Furthermore, when $G$ is countable, this model companion is superstable. The theory of a Hilbert space expanded by a single unitary operator with countable spectrum is treated in \cite{Argoty}, where it is proved that the expansion eliminates quantifiers and is $\Ao$-stable.
Both papers \cite{zadka,Argoty} relied heavily on tools from spectral theory like the spectral decomposition theorem. 

In this paper, when the group involved is non abelian, we cannot use the spectral decomposition theorem to describe the action. Instead, we rely on tools from representation theory when dealing with finite groups or tools from $C^*$-algebras theory when the group is not finite. We denote the theory of a Hilbert space expanded by the unitary representation $\pi$ of a group $G$ as $IHS_\pi$.

We first consider expansions corresponding to actions of finite groups. The main tool we use are basic ideas from representation theory (see \cite{pierre}, which we review in Section \ref{sec:background} below). In this context, existentially closed expansions (that played a crucial role in the literature of similar expansions) can be understood as those with a richer presence of irreducible representations, other representations will have instead irreducible pieces with finite multiplicity.  We prove that the theory of any such expansion is $\Ao$-categorical and $\Ao$-stable. We also define a natural notion of \textit{independence} and prove it coincides with non-forking, which allows us to prove some ``geometric'' results associated to the theory of the expansion. For example, we show the expansion is \textit{non-multidimensional}. We also show that the associated theory of \textit{Belles Paires} of $IHS_\pi$ is $\Ao$-categorical and thus the theory $IHS_\pi$ is strongly finitely based (SFB)(see Fact \ref{fact:SBFEquivalence} and the discussion before for more details). 

Then we deal with the case when $G$ is countable infinite. To analyze these expansions, we need to switch to new tools. Instead of using tools from representation theory, we need to consider the $C^*$-algebra generated by the unitary maps from the representation and use consequences of Voiculescu's theorem (see Theorem II.5.8 \cite{ken} and Section \ref{sec:background} below) to prove that the theory $IHS_\pi$ is $\Ao$-categorical up to perturbations, and when the theory $IHS_\pi$ is model complete, it is $\Ao$-stable up to perturbations (see Definition \ref{defn:pertubations} below).


This paper is organized as follows. In Section \ref{sec:background} we give some basic tools from representation theory of finite groups, and basic background on operator theory including the notion of spectrum of a unitary operator, some ideas from $C^*$-algebras like Voicolescu’s Theorem, and  some model-theoretic applications to perturbations. In Section \ref{rep:finite} we consider the case where $G$ is finite, we prove the corresponding expansions $\Ao$-categorical and $\Ao$-stable and give a natural characterization of non-forking independence and show the theory is SFB. In Section \ref{rep:infinite}, we deal with the case where $G$ is infinite and prove that the theory $IHS_\pi$ is $\Ao$-categorical up to perturbations. Finally, we show that when the theory $IHS_\pi$ is model complete, then  $IHS_\pi$ is $\Ao$-stable up to perturbations. 

We will assume the reader is familiar with continuous logic, all background needed can be found in \cite{AlexMT, ItaiStability}, some basic knowledge of perturbations will also be helpful, the corresponding background can be found in \cite{{BY-perturbations}}. We will assume no prior knowledge of representation theory. The necessary background on this subject and on operator theory will be introduced in Section \ref{sec:background}.


\section{Background on operator theory and representation theory}\label{sec:background}

In this section, we first review results from representations of finite groups, our main focus is on irreducible representations and projections onto sums of isomorphic irreducible representations; these sums play the role of basic blocks that will help us describe the theory $IHS_\pi$. We then introduce some technical tools from operator theory and $C^*$-algebras concerning Voiculescu's theorem. 

We start by introducing the language used to treat a Hilbert space expanded by a unitary group representation as a metric structure.

\begin{defn}\label{theoryoftheexpansion}

Let $G$ be a group and let $H$ be a Hilbert space. A \textit{unitary representation} $\pi$ of $G$ on $H$ is a homomorphism $\pi \colon G \longrightarrow U(H)$. We define  
\[ \mathcal{L}=\Parl{0,-,\dot2, \frac{x+y}{2},e^{i\theta}: \theta \in 2\pi\Qu }
\]

to be the language of Hilbert spaces and

\[
\mathcal{L}_\pi := \mathcal{L}\cup\parl{\pi(g) : g \in G},
\]

as the representation language, where each $\pi(g)$ is a unary function with modulus of uniform continuity $\Delta(\epsilon)=\epsilon$. We denote the theory of infinite dimensional Hilbert spaces by $IHS$. The language $\mathcal{L}$ defined above is the one presented in \cite{itai2} enriched by multiplication by the family of complex scalars $\{e^{i\theta}: \theta \in 2\pi\Qu\}$. We denote by $IHS_\pi := Th(H, \pi)$ the theory of the infinite-dimensional Hilbert space $H$ expanded by the unitary representation $\pi$ of $G$ in the language $\mathcal{L}_\pi$. Note that the theory $IHS_\pi$ includes information like "each $\pi(g)$ is a unitary map" and "for all $g_1,g_2\in G$, $\pi(g_1\cdot g_2)=\pi(g_1)\pi(g_2)$ as functions".

\end{defn}

\subsection{Representation theory of finite groups on linear groups}\label{Sec:finitegroups}

In this subsection we recall some results about representations of finite groups from \cite{pierre}. These results will be useful to prove that the theory $IHS_\pi$ is $\Ao$-categorical and $\Ao$-stable, where $\pi$ is any unitary representation of a finite group $G$ on an infinite dimensional Hilbert space. In this subsection, $G$ will always stand for a finite group and $V$ for a finite dimensional vector space.

\begin{defn}[Definition 1.1 \cite{pierre}]

A \textit{linear representation} of $G$ in $V$ is a homomorphism $\pi$ from $G$ into GL$(V)$. When $V$ has dimension $n$, the representation is said to have \textit{degree} $n$.

\end{defn}

Now, we introduce the \textit{left regular representation} of $G$. As we will later see, this representation is especially rich respect to other representations.

\begin{defn} (Example 1.2.b of \cite{pierre})
Suppose that $V$ has dimension $|G|$ with basis $\parl{e_g}_{g \in G}$ indexed by the elements of $G$ (if we add to $V$ a Hilbert space structure, it is denoted by $\ell_2(G)$). For all $h \in G$, we denote by $\lambda_G(h)$ the linear map sending each $e_g$ to $e_{hg}$; this defines a linear representation of $G$, which is called the \textit{left regular representation} of $G$ and it is denoted by $\lambda_G$.
\end{defn}

\begin{defn}[Section 1.3 \cite{pierre}]

Let $\pi : G \longrightarrow$ GL$(V)$ be a linear representation and let $W$ be a vector subspace of $V$. Assume that $W$ is \textit{invariant} under the action of $G$. Then, the restriction maps $\parl{\pi(g)\!\!\upharpoonright\!\!_W}_{g\in G}$ are automorphisms of $W$ satisfying for all $g_1, g_2 \in G$
\[
\pi(g_1g_2)\!\!\upharpoonright\!\!_W = \pi(g_1)\!\!\upharpoonright\!\!_W \pi(g_2)\!\!\upharpoonright\!\!_W.
\]
Thus, $\pi\!\!\upharpoonright\!\!_W : G \longrightarrow$ GL$(W)$ is a linear representation of $G$ in $W$ and it is called a \textit{subrepresentation} of $V$. Additionally, if $W$ has no non-proper and non-trivial subrepresentation, it is called an \textit{irreducible representation}.

\end{defn}

\begin{fact}[Theorem 2 of \cite{pierre}]\label{irreducibleDecompositionRepresentation}

Every linear representation is a direct sum of irreducible representations.

\end{fact}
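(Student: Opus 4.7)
The plan is to proceed by strong induction on $\dim V$. The base case is immediate: if $V$ is itself irreducible (including the degenerate cases $\dim V \leq 1$), there is nothing to prove. For the inductive step, assume $V$ is not irreducible, so there exists a nontrivial proper invariant subspace $W \subset V$. The crux of the argument is to produce a $G$-invariant complement $W'$ with $V = W \oplus W'$; once this is available, both restrictions $\pi\upharpoonright_W$ and $\pi\upharpoonright_{W'}$ are representations of strictly smaller degree, and the induction hypothesis applied to each decomposes $V$ as a direct sum of irreducibles.

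To construct the invariant complement, I would use the standard averaging trick available because $G$ is finite. Start with any linear projection $p_0 \colon V \to V$ with image $W$ (obtained by choosing an arbitrary vector space complement to $W$ inside $V$) and define
\[
p := \frac{1}{|G|} \sum_{g \in G} \pi(g)\, p_0\, \pi(g)^{-1}.
\]
A routine computation, using that $W$ is invariant (so $p_0$ acts as the identity on each $\pi(g)^{-1}(W) = W$ relevant to the image), shows that $p$ is again a projection onto $W$. Reindexing the sum by $g \mapsto hg$ gives $\pi(h)\, p = p\, \pi(h)$ for every $h \in G$, so $p$ is $G$-equivariant. Consequently $W' := \ker p$ is $G$-invariant and satisfies $V = W \oplus W'$, completing the inductive step.

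The only subtle point is verifying the algebraic properties of the averaged projection, which is entirely mechanical. The essential feature of the argument is that $|G|$ must be invertible in the scalar field; this is exactly where the hypothesis that $G$ is finite enters, and it explains why the representation-theoretic toolkit of this section must be replaced by $C^*$-algebraic techniques in the infinite case treated later in the paper.
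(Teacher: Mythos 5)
Your argument is correct and is exactly the standard proof of Maschke's theorem that the paper implicitly relies on by citing Theorem 2 of Serre: average an arbitrary projection onto the invariant subspace $W$ over the group to obtain a $G$-equivariant projection, take its kernel as an invariant complement, and induct on dimension. The only remark worth adding is that in the unitary setting used throughout this paper the averaging step is not even needed, since the orthogonal complement $W^{\perp}$ is automatically invariant under a unitary action; but your more general argument is the one Serre gives and is perfectly valid here.
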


\begin{fact}[Corollary 1, Theorem 4 of \cite{pierre}]\label{irreducibleDecomp-multiplicity}

The number of irreducible representations $W_i$ isomorphic to a given irreducible representation $W$ is independent of the chosen decomposition.

\end{fact}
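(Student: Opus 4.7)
The plan is to show that the multiplicity of each irreducible $W$ in a decomposition $V=\bigoplus_{i=1}^{k} W_i$ of Fact \ref{irreducibleDecompositionRepresentation} coincides with an intrinsic invariant of $V$, namely $\dim_{\mathbb{C}} \operatorname{Hom}_G(W,V)$, so it cannot depend on the chosen decomposition. The key tool will be Schur's lemma, which asserts that for irreducible $G$-representations $W'$ and $W''$, every $G$-equivariant linear map $W'\to W''$ is either zero or an isomorphism, and hence over $\mathbb{C}$ one has $\dim_{\mathbb{C}} \operatorname{Hom}_G(W',W'')=1$ when $W'\cong W''$ and $0$ otherwise.

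First I would fix an arbitrary decomposition $V=\bigoplus_{i=1}^{k} W_i$ into irreducibles provided by Fact \ref{irreducibleDecompositionRepresentation}. Then, using the canonical isomorphism
\[
\operatorname{Hom}_G(W,V) \;\cong\; \bigoplus_{i=1}^{k} \operatorname{Hom}_G(W,W_i),
\]
I would apply Schur's lemma term by term: the $i$-th summand vanishes unless $W_i\cong W$, in which case it contributes a one-dimensional space. Therefore $\dim_{\mathbb{C}} \operatorname{Hom}_G(W,V)$ is exactly the number of indices $i$ with $W_i\cong W$, i.e.\ the multiplicity of $W$ in the given decomposition.

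Since the quantity $\dim_{\mathbb{C}} \operatorname{Hom}_G(W,V)$ depends only on $W$ and $V$ and not on any choice of decomposition, the multiplicity of $W$ must be independent of the decomposition as claimed. The only delicate point is the invocation of Schur's lemma over $\mathbb{C}$; once this is in hand the rest is a direct dimension count. An alternative route, if preferred, is via character theory: writing $\chi_V=\sum_W n_W\,\chi_W$ where the sum ranges over isomorphism classes of irreducibles, the orthonormality of irreducible characters gives $n_W=\langle \chi_V,\chi_W\rangle$, which is again manifestly an invariant of $V$ alone.
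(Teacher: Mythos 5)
Your argument is correct, but it is not the one the paper points to. The paper does not prove this statement itself: it imports it as Corollary~1 to Theorem~4 of Serre, and in that source the multiplicity is identified with the inner product $\langle \chi_V,\chi_W\rangle$ of characters, whose invariance follows from the orthogonality relations for irreducible characters --- i.e.\ exactly the ``alternative route'' you mention in your last sentence. Your primary route, identifying the multiplicity with $\dim_{\mathbb{C}}\operatorname{Hom}_G(W,V)$ via Schur's lemma, is a standard and complete proof: the isomorphism $\operatorname{Hom}_G(W,\bigoplus_i W_i)\cong\bigoplus_i\operatorname{Hom}_G(W,W_i)$ holds for any finite direct sum, and Schur's lemma over $\mathbb{C}$ gives the dimension count $1$ or $0$ per summand. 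The Hom-space approach has the advantage of not requiring the character machinery at all (only finite-dimensionality and algebraic closure of the ground field for the $\dim=1$ clause), whereas the character-theoretic proof gives, as a byproduct, the explicit formula $n_W=\langle\chi_V,\chi_W\rangle$ that the paper actually exploits later: the projection formula of Fact~\ref{fact:projectionsPierre}(2), which is central to Section~\ref{rep:finite}, is built from the same character data. So your proof is a valid and slightly more economical substitute for the cited one, but the character computation is not dispensable for the rest of the paper.
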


These last two facts will allow us to understand the theory of a Hilbert space expanded by a unitary representation of a group. An arbitrary unitary representation can be split into a direct sum of its irreducible subrepresentations and we can count the number of times each irreducible representation appears in the sum. We call this number (which can be a natural number or $\infty$ when dealing with infinite dimensional Hilbert spaces) the \emph{multiplicity} of the given irreducible representation. Part of our work in section  \ref{rep:finite} is to show that the multiplicity of each irreducible representation can be recovered from the theory of the expansion. 

\begin{rem}\label{rem:betterthinkfinitely}

Let $H$ be a Hilbert space of infinite dimension and let $\pi$ be a homomorphism from $G$ to $U(H)$. Studying such a representation can always be reduced to the study of representations of $G$ in finite dimensional subspaces. To do this reduction, take some $x \in H$ and consider the finite dimensional subspace generated by $\parl{\pi(g)x}_{g \in G}$. In this space we could use the theory for representations of finite dimension and then we wrap these spaces together to understand the action of $G$ all over $H$. 

\end{rem}

\begin{fact}[Corollary 1, Proposition 5 of \cite{pierre}]\label{decompositionRegularRepresentation}

Every irreducible unitary representation $W$ of $G$ is contained in the left regular representation of $G$ with multiplicity equal to its degree. In particular, there are only finitely many irreducible unitary representations of $G$.

\end{fact}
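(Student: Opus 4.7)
The plan is to recover both assertions from character theory and the Schur orthogonality relations. First, I would compute the character $\chi_{\lambda_G}$ of the left regular representation. Since $\lambda_G(h)$ acts on the canonical basis $\{e_g\}_{g\in G}$ of $\ell_2(G)$ by $e_g \mapsto e_{hg}$, the trace of $\lambda_G(h)$ counts the basis vectors it fixes; this yields $\chi_{\lambda_G}(e)=|G|$ and $\chi_{\lambda_G}(h)=0$ for every $h\neq e$, because $hg=g$ forces $h=e$.

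Second, I would invoke the orthogonality relations for irreducible characters: for any finite-dimensional representation $V$ and any irreducible $W$ with character $\chi_W$, the multiplicity of $W$ in the decomposition of $V$ provided by Fact \ref{irreducibleDecompositionRepresentation} is
\[
\langle \chi_V, \chi_W \rangle \;=\; \frac{1}{|G|} \sum_{g\in G} \chi_V(g)\,\overline{\chi_W(g)}.
\]
Applied with $V = \lambda_G$, only the $g=e$ term survives and gives the multiplicity $\frac{1}{|G|}\cdot |G|\cdot \overline{\chi_W(e)} = \dim W$, which proves the first assertion. In particular this multiplicity is at least one, so every irreducible unitary representation of $G$ embeds into $\lambda_G$ (and so only those are candidates).

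For the finiteness clause I would then take total dimensions in the decomposition $\lambda_G \cong \bigoplus_{W} W^{\dim W}$ indexed over the distinct irreducibles appearing in $\lambda_G$, producing the classical identity $|G| = \sum_W (\dim W)^2$. Because every irreducible of $G$ already appears in this finite sum by the previous step, there are only finitely many up to isomorphism, as claimed. The main obstacle is not in the corollary itself but in the background tools it rests on, namely Schur's lemma and the orthogonality relations (obtained in \cite{pierre} through Schur together with an averaging trick producing intertwiners). Once those are available, both conclusions drop out in a few lines; alternatively, one could avoid characters entirely by constructing a non-zero $G$-equivariant map $\ell_2(G) \to W$ from any fixed $w\in W$ via $e_g \mapsto \pi(g^{-1})w$ and then using irreducibility together with complete reducibility (Fact \ref{irreducibleDecompositionRepresentation}) to extract exactly $\dim W$ copies of $W$ from $\lambda_G$.
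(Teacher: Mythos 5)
Your proof is correct and is essentially the argument of the cited source (Serre, Corollary 1 to Proposition 5): the paper itself offers no proof of this Fact, and your computation of $\chi_{\lambda_G}$ followed by the orthogonality relations and the identity $|G|=\sum_W(\dim W)^2$ is exactly the standard route. One minor slip in your optional character-free aside: with the convention $\lambda_G(h)e_g=e_{hg}$ used in the paper, the equivariant map $\ell_2(G)\to W$ is $e_g\mapsto\pi(g)w$ rather than $e_g\mapsto\pi(g^{-1})w$; this does not affect your main argument.
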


This fact suggests that if we take the direct sum of countably many left regular representations of $G$, then the structure obtained should be existentially closed, which indeed is the case even in the larger setting of amenable groups:

\begin{fact}[Theorem 2.5 and Theorem 2.8 of \cite{AlexHGGA}]\label{modelExistenciallyClosed} Let $S$ be a countable and amenable group. Then the model $(\infty \ell_2(S),\infty \lambda_S) := \oplus_{n\geq 1}(\ell_2(S),\lambda_S)$ (countable copies of the representation $(\ell_2(S),\lambda_S)$) is existentially closed  and its theory has quantifier elimination.

\end{fact}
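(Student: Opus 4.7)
The plan is to use two ingredients. First, Hulanicki--Reiter's characterization of amenability: a countable group $S$ is amenable if and only if every unitary representation of $S$ is weakly contained in the left regular representation $\lambda_S$. Second, the ``infinite room'' in $M := (\infty \ell_2(S), \infty \lambda_S)$ coming from having countably many orthogonal copies of $\lambda_S$. The underlying type-theoretic content is that the quantifier-free type of a tuple $\bar{a}$ in an $\mathcal{L}_\pi$-structure is determined by the matrix of inner products $\bigl(\langle \pi(g)a_i, a_j\rangle\bigr)_{g\in S,\, i,j}$, which encodes the isomorphism class of the closed invariant subspace $\overline{\langle \pi(S)\bar{a}\rangle}$ as a unitary representation of $S$ together with the position of $\bar{a}$ inside it.

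For existential closedness, suppose $M \subseteq N \models IHS_{\lambda_S^\infty}$ and $\exists \bar{y}\,\varphi(\bar{a},\bar{y})$ holds in $N$, witnessed by $\bar{b}\in N$. Let $W := \overline{\langle \pi_N(S)\,\bar{a}\bar{b}\rangle}$, viewed as a unitary representation of $S$. By amenability, $W$ is weakly contained in $\lambda_S$, so for every $\epsilon>0$ there is an approximate $S$-equivariant isometry from the finite tuple $\bar{a}\bar{b}$ into some finite direct sum of copies of $\ell_2(S)$. Since the invariant subspace generated by $\bar{a}$ already sits inside one $\lambda_S$-summand of $M$, and $M$ contains infinitely many further orthogonal copies of $\lambda_S$ where a ``perturbed $\bar{b}$'' can be placed, one obtains witnesses in $M$ making $\varphi(\bar{a},\cdot)$ smaller than $\epsilon$. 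Taking $\epsilon \to 0$ gives $\inf_{\bar{y}} \varphi(\bar{a},\bar{y}) = 0$ in $M$, which is precisely existential closedness in the continuous sense.

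For quantifier elimination, the plan is a back-and-forth between two $\aleph_1$-saturated models $M_1, M_2$ of $\mathrm{Th}(M)$. Given tuples $\bar{a}\in M_1$, $\bar{a}'\in M_2$ sharing the same quantifier-free type, the induced $S$-equivariant unitary between $\overline{\langle \pi(S)\bar{a}\rangle}$ and $\overline{\langle \pi(S)\bar{a}'\rangle}$ is an isomorphism of representations. To extend by a single element $c \in M_1$: the invariant subspace $V := \overline{\langle \pi(S)\bar{a}c\rangle}$ is weakly contained in $\lambda_S$ by amenability; the weak-containment data plus the axioms of $IHS_{\lambda_S^\infty}$ (\emph{every} finite configuration that can occur in some Hilbert representation of $S$ occurs approximately in $M$) yields a sequence of approximate realizations in $M_2$ compatible with $\bar{a}'$, which by saturation must be realized exactly by some $c' \in M_2$. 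Hence partial elementary maps on finite tuples are determined by quantifier-free data, and standard arguments (as in Proposition 3.3 of \cite{ItaiStability}) upgrade this to full quantifier elimination.

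The main obstacle is the passage from weak containment (approximate intertwiners) to honest equivariant embeddings, while simultaneously preserving the previously embedded piece $\bar{a}$ \emph{exactly}. Hulanicki--Reiter is inherently approximate, so one cannot just quote it and be done; instead the infinite multiplicity in $M$ is essential, because it provides the ``slack'' into which the approximate corrections can be absorbed, and saturation is essential, because it converts the approximate realizability of the relevant partial types into genuine realization. Once this approximation-to-realization mechanism is set up cleanly, both existential closedness and quantifier elimination follow the same pattern.
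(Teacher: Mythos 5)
This statement is not proved in the paper: it is imported as a Fact from Theorems 2.5 and 2.8 of \cite{AlexHGGA}, and the paper explicitly notes that the proof given there ``uses Hulanicki's theorem.'' Your sketch therefore reconstructs the strategy of the cited source rather than anything argued in this paper. The paper's own new contribution on this point (Corollary \ref{existclosed} together with Proposition \ref{cuantifiereliminationGfinite}) covers only finite $G$ and proceeds quite differently: it classifies separable models by the multiplicities of the finitely many irreducible representations, proves quantifier elimination from that classification, and then deduces existential closedness of $(\infty\ell_2(G),\infty\lambda_G)$ as a corollary; no amenability or weak containment is needed there, because Fact \ref{decompositionRegularRepresentation} gives honest (not merely approximate) containment of every irreducible in $\lambda_G$. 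What the Hulanicki route buys is generality (all countable amenable $S$, where containment is only weak); what the paper's route buys in the finite case is exact embeddings and the stronger conclusions of Section \ref{rep:finite}.

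As a blueprint for the amenable case your sketch is essentially the right one, but two points need repair before it is a proof. First, the claim that ``the invariant subspace generated by $\bar a$ already sits inside one $\lambda_S$-summand of $M$'' is false for a general tuple of $M=\oplus_{n\geq 1}\ell_2(S)$; you should instead perturb $\bar a$ by $\epsilon$ so that it lies in finitely many summands, leaving infinitely many fresh orthogonal copies of $\lambda_S$. Second, before invoking weak containment you must split the witness as $\bar b = \text{Pr}_{\overline{\langle\pi(S)\bar a\rangle}}(\bar b) + \bar b^{\perp}$: the first part already lies in $M$ and carries all the cross inner products $\langle \pi(g)b_i, a_j\rangle$, and only the orthogonal remainder needs to be approximately relocated into the fresh copies. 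Without this splitting, weak containment of the representation generated by $\bar a\bar b$ gives no control on the position of the approximating tuple relative to the fixed $\bar a$. With these two fixes the existential-closedness argument closes, and the quantifier-elimination back-and-forth goes through once one verifies that the approximate-realization scheme you appeal to is actually part of an axiomatization of $Th(M)$, so that it transfers to the saturated models $M_1,M_2$ --- which is precisely what \cite{AlexHGGA} establishes.
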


The theory of the model described in Fact \ref{modelExistenciallyClosed} is the \textit{model companion} of the theory of a Hilbert space expanded by any unitary representation of $S$. The proof provided in \cite{AlexHGGA} uses Hulanicki's theorem. In this paper, we will give a different proof of this result in Corollary \ref{existclosed} when the underlying group $G$ is finite.\\

Let us return to tools from representation theory. Let $T$ be a linear transformation over $V$, and let be $B$ a basis of $V$. If $[a_{ij}]_B$ is the matrix representation of $T$ in the basis $B$, then the \textit{trace} Tr$(T) := \sum_{i=1}^n a_{ii}$, is independent of the choice of $B$.

\begin{defn}[Definition 2.1 of \cite{pierre}]

Let $\pi$ be a linear representation of $G$ in $V$. For each $g \in G$, the map $\pi(g)$ is a linear transformation over $V$ and we denote the trace of $\pi(g)$ by $\chi_{\pi}(g) :=$ Tr$(\pi(g))$. This complex valued function $\chi_{\pi} : G \longrightarrow \Ce$ is called the \textit{character} of $\pi$.

\end{defn}

Characters  are important in representation theory since they determine the irreducible representations. Indeed, two representations having the same characters  are isomorphic (see Corollary 2, Theorem 4 of \cite{pierre}), meaning that there is a bijective linear transformation between the representations preserving the action of $G$. Additionally, in our context, characters play an important role since we will use them (see Fact \ref{fact:projectionsPierre} below) for defining projections from a linear representation onto its irreducible representations.\\

Given the group $G$, by Fact 
\ref{decompositionRegularRepresentation} that there are finitely many irreducible unitary representations $W_1, \ldots, W_k$ (modulo isomorphism) of $G$. 
Let $\chi_1, \ldots, \chi_k$ be their characters and let $n_1, \ldots, n_k$ be their degrees. 

Let $V$ be a finite dimensional vector space with a linear representation of $G$. Write $V = U_1 \oplus \ldots \oplus U_m$ as a decomposition of $V$ into a direct sum of irreducible representations of $G$. For each $i=1,\ldots, k$ we denote by $V_i$ the direct sum of those irreducible pieces among $U_1, \ldots, U_m$ that are isomorphic to $W_i$. Then, we can write $V = V_1 \oplus \ldots \oplus V_k$, a new decomposition of $V$ into sums of irreducible subrepresentations of $V$ that belong to distinct classes of isomorphisms.

\begin{fact}[Theorem 8 of \cite{pierre}]\label{fact:projectionsPierre}\

\begin{enumerate}

\item The decomposition $V \cong V_1\oplus \dots\oplus V_k$ does not depend on the initially chosen decomposition of $V$ into irreducible representations of $G$ in $V$.  

\item If $1 \leq i \leq k$, the projection $p_i$ of $V$ onto $V_i$ associated to the decomposition in (1) is given by $p_i = \frac{n_i}{|G|} \sum_{g \in G} \chi_i(g)^*\pi(g)$ (and may be identically $0$ when $W_i$ 
is not represented in $\pi$).

\end{enumerate}

\end{fact}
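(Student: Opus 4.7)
The plan is to derive both statements simultaneously by showing that the operator $P_i := \frac{n_i}{|G|}\sum_{g\in G}\chi_i(g)^*\pi(g)$ is always the projection onto the sum of those irreducible subrepresentations of $V$ that are isomorphic to $W_i$; since this operator is defined purely from $\pi$ and the characters $\chi_i$ (no choice of decomposition enters), the isotypic pieces it picks out are canonical, giving (1), while the explicit formula is (2).

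First I would observe that $P_i$ commutes with the $G$-action. For any $h\in G$, a change of variables $g\mapsto hgh^{-1}$ together with the fact that $\chi_i$ is a class function gives $\pi(h)P_i\pi(h)^{-1}=P_i$, so $P_i\in\mathrm{End}_G(V)$. Next I would fix some decomposition $V=U_1\oplus\cdots\oplus U_m$ of $V$ into irreducible subrepresentations (granted by Fact \ref{irreducibleDecompositionRepresentation}) and analyze $P_i$ on each $U_\ell$ separately. Because each $U_\ell$ is $G$-invariant, $P_i$ restricts to a $G$-equivariant endomorphism of $U_\ell$, and Schur's lemma forces $P_i\!\upharpoonright_{U_\ell}=\lambda_\ell\cdot\mathrm{id}_{U_\ell}$ for some scalar $\lambda_\ell\in\mathbb{C}$.

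To determine $\lambda_\ell$, I would compute the trace two ways. On the one hand, $\mathrm{Tr}(P_i\!\upharpoonright_{U_\ell})=\lambda_\ell\cdot\dim U_\ell$. On the other hand, if $U_\ell\cong W_j$ then
\[
\mathrm{Tr}(P_i\!\upharpoonright_{U_\ell})=\frac{n_i}{|G|}\sum_{g\in G}\chi_i(g)^*\chi_j(g),
\]
and the orthogonality relations for irreducible characters (Theorem 3 of \cite{pierre}) give $\frac{1}{|G|}\sum_g\chi_i(g)^*\chi_j(g)=\delta_{ij}$. Hence $\lambda_\ell\cdot n_j=n_i\,\delta_{ij}$, so $\lambda_\ell=1$ when $U_\ell\cong W_i$ and $\lambda_\ell=0$ otherwise.

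This shows that $P_i$ acts as the identity on every irreducible summand isomorphic to $W_i$ and as zero on every other irreducible summand. Therefore, summing over $\ell$, the image of $P_i$ is exactly the sum of those $U_\ell$ isomorphic to $W_i$ (which is, with respect to the initial decomposition, the subspace $V_i$), and the kernel contains the remaining isotypic components. Since this description of $\mathrm{Im}(P_i)$ is given by the intrinsic operator $P_i$ and does not reference the chosen decomposition, the subspace $V_i=\mathrm{Im}(P_i)$ is canonically defined, yielding (1); and the explicit formula for $p_i=P_i$ yields (2). The only delicate step is the appeal to Schur's lemma together with the correct form of character orthogonality, which is why I would state the latter explicitly before taking traces; no essential obstacle is expected beyond being careful with the convention $\chi_i(g)^*=\chi_i(g^{-1})$ valid for unitary representations.
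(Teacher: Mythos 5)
Your argument is correct and complete: the conjugation-invariance of $P_i$ via the class-function property of $\chi_i$, Schur's lemma on each irreducible summand, and the trace computation using character orthogonality together identify $P_i$ as the identity on the $W_i$-isotypic part and zero elsewhere, which yields both the canonicity of $V_i=\mathrm{Im}(P_i)$ and the projection formula. The paper states this result as a Fact imported from Theorem 8 of Serre without reproducing a proof, and your reconstruction is precisely the standard argument given in that source, so there is nothing to compare beyond noting the match.
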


\begin{rem}\label{rem:projectionsPierre}

Consider now an infinite dimensional Hilbert space $H$ and a representation $\pi$ of $G$ in $H$ and consider the expansion of $H$ in the language $\mathcal{L}_{\pi}$ that includes a symbol for each $\pi(g)$ for $g\in G$. For each $1\leq i\leq k$ we let $P_i = \frac{n_i}{|G|} \sum_{g \in G} \chi_i(g)^*\pi(g)$, then the function $P_i$ is definable in $\mathcal{L}_{\pi}$.

\end{rem}

\subsection{Operator theory and C$^*$-algebras}
Let $H$ be a Hilbert space and let $T$ be a bounded linear operator on $H$.

\begin{defn}\label{Defn:spectrum}
The \textit{spectrum} of a linear operator $T$, denoted by $\sigma\!\parc{T}$, is defined as the set
\[
\sigma\!\parc{T} = \parl{\lambda \in \Ce : (T-\lambda I) \text{ is not bijective}}.
\]
The spectrum can be divided into three different types:

\begin{itemize}

\item $\sigma_p\!\parc{T} := \parl{\lambda \in \Ce : \ker(T-\lambda I) \not= 0}$; if $\lambda \in \sigma_p\!\parc{T}$ we call $\lambda$ a \emph{punctual eigenvalue} of $T$.
\item $\sigma_c\!\parc{T} := \Parl{\lambda\in \Ce :  \ker(T-\lambda I) = 0 \text{ and } \overline{Im(T-\lambda I)} = H}$; if $\lambda \in \sigma_c\!\parc{T}$ we call $\lambda$ an \emph{approximate eigenvalue} of $T$.
\item $\sigma_r\!\parc{T} := \Parl{\lambda \in \Ce :  \ker(T-\lambda I) = 0 \text{ and } \overline{Im(T-\lambda I)} \not= H}$; if $\lambda \in \sigma_r\!\parc{T}$ we call $\lambda$ a \emph{residual eigenvalue} of $T$.
\end{itemize}

The \textit{punctual spectrum} is the collection of punctual eigenvalues, the \textit{continuous spectrum} is the collection of approximate eigenvalues, and the \textit{residual spectrum} is the collection of residual eigenvalues.

\end{defn}

\begin{fact}[Corollary 6.10.11 of \cite{naylor}]\label{Unitarioespectroresidual}

Let $T$ be a normal operator over a Hilbert space $H$. Then $T$ has no residual eigenvalues. Thus, the spectrum of a normal operator is divided only into two pieces
\[
\sigma(T) = \sigma_p\!\parc{T}\cup\sigma_c\!\parc{T}.
\]

\end{fact}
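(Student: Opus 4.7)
The plan is to use the characterization of the closure of the image via the kernel of the adjoint, together with the key norm identity for normal operators. Recall that for any bounded linear operator $S$ on a Hilbert space, one has the orthogonal decomposition $H = \overline{\mathrm{Im}(S)} \oplus \ker(S^*)$, so $\overline{\mathrm{Im}(S)} = H$ if and only if $\ker(S^*) = 0$. The goal is therefore to rule out the possibility that $\ker(T-\lambda I) = 0$ while $\ker((T-\lambda I)^*) \neq 0$.

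First I would check that if $T$ is normal, then so is $T - \lambda I$ for any $\lambda \in \Ce$. This is a straightforward calculation expanding $(T-\lambda I)(T^* - \bar\lambda I)$ and $(T^* - \bar\lambda I)(T - \lambda I)$ and using that $T^*T = TT^*$; the cross terms involve only scalar multiples of $T$ and $T^*$, which commute with themselves, and the remaining pieces coincide precisely because of normality of $T$.

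Next, the main step: for any normal operator $S$, the identity $\|Sx\| = \|S^*x\|$ holds for every $x \in H$. This follows from
\[
\|Sx\|^2 = \langle Sx, Sx\rangle = \langle S^*Sx, x\rangle = \langle SS^*x, x\rangle = \langle S^*x, S^*x\rangle = \|S^*x\|^2,
\]
where the middle equality is exactly normality. As an immediate consequence, $\ker(S) = \ker(S^*)$.

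Applying this to $S = T - \lambda I$, suppose for contradiction that $\lambda \in \sigma_r(T)$. Then $\ker(T - \lambda I) = 0$ and $\overline{\mathrm{Im}(T - \lambda I)} \neq H$. The latter condition is equivalent to $\ker((T-\lambda I)^*) = \ker(T^* - \bar\lambda I) \neq 0$, but by the previous step this kernel equals $\ker(T - \lambda I) = 0$, a contradiction. Hence $\sigma_r(T) = \emptyset$, and the decomposition $\sigma(T) = \sigma_p(T) \cup \sigma_c(T)$ follows from Definition \ref{Defn:spectrum}. The only subtle point to double-check is that these two pieces are genuinely exhaustive of the full spectrum once $\sigma_r(T)$ is empty, which is immediate from the definition once one observes that $(T - \lambda I)$ being injective with dense range but failing to be surjective would still place $\lambda$ in $\sigma_c(T)$, not in some fourth class.
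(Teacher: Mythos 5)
Your argument is correct. The paper itself gives no proof of this statement---it is imported as Fact~\ref{Unitarioespectroresidual} with a citation to Corollary 6.10.11 of \cite{naylor}---so there is no in-paper argument to compare against. What you give is the standard proof: $T-\lambda I$ inherits normality, the identity $\|Sx\|=\|S^*x\|$ for normal $S$ yields $\ker(S)=\ker(S^*)$, and combined with $\overline{\mathrm{Im}(S)}^{\perp}=\ker(S^*)$ this makes the defining conditions of $\sigma_r(T)$ contradictory; the final remark that $\sigma_p$ and $\sigma_c$ then exhaust $\sigma(T)$ follows directly from Definition~\ref{Defn:spectrum}. All steps check out.
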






Next, we introduce the concept of a C$^*$-algebra along with the concept of a representation of a C$^*$-algebra. These two are fundamental to understand the key concept that we will use in this paper: approximately unitarily equivalence between algebras.

\begin{defn}[Basics of \cite{ken}]

A \textit{Banach algebra} $\mathcal{A}$ is a complex normed algebra which is complete (as a topological space) and satisfies $\norm{AB} \leq \norm{A}\norm{B}$ for all $A, B \in \mathcal{A}$.

\end{defn}

\begin{defn}[Basics of \cite{ken}]

A $C^*$\textit{-algebra} $\mathcal{A}$ is Banach $*$-algebra (a Banach algebra with an involutive operation $*$) with the additional condition that  $\norm{A^*A} = \norm{A}^2$ for all $A \in \mathcal{A}$.

\end{defn}

\begin{example}[Example 1.1 of \cite{ken}]

The algebra of all bounded operators $B(H)$ on a Hilbert space $H$ is a C$^*$-algebra with the usual operation of adjoint $-^*$. This result follows from the equality:
\[
\norm{A^*A} = \sup_{\norm{x}=\norm{y}=1} |\gen{A^*Ax,y}| = \sup_{\norm{x}=\norm{y}=1} |\gen{Ax,Ay}| = \norm{A}^2.
\]

\end{example}

\begin{example}

The subalgebra of compact operators in $B(H)$ on a Hilbert space $H$, denoted by $K(H)$, also defines a C$^*$-algebra.

\end{example}

\begin{defn}[Basics of \cite{ken}]

Let $\mathcal{A}$ be a C$^*$-algebra and let $H$ be a Hilbert space. A map $\pi : \mathcal{A} \longrightarrow B(H)$ is said to be a $*$-\textit{representation} of $\mathcal{A}$ if $\pi$ is a homomorphism of $*$-algebras which commutes with the involution.

\end{defn}

\begin{defn}[Section II.4 of \cite{ken}]\label{def:repAUE}

Let $\mathcal{A}$ be a C$^*$-algebra and let $\pi_1$ and $\pi_2$ be representations of $\mathcal{A}$ on Hilbert spaces $H_1$ and $H_2$ respectively. The representations $\pi_1$ and $\pi_2$ are called \textit{Approximately Unitarily Equivalent} (AUE) if there is a sequence of unitary operators $\parl{\mathcal{O}_k}_{k \in \Na}$ with $\mathcal{O}_k:H_1\to H_2$ such that
\[
\pi_2(A) = \lim_{k \to \infty} \mathcal{O}_k \pi_1(A) \mathcal{O}^*_k \text{ for all } A \in \mathcal{A}. 
\]

where convergence is in the sense of the operator norm topology.

\end{defn}


There is a strong connection between two families of operators being approximately unitarily equivalent and the structures (the Hilbert spaces expanded with the C$^*$-algebras) being elementary equivalent. 

\begin{rem}\label{JiontlyAproximatelyEquivalentFamilies}

Let $\mathcal{A}$ be a C$^*$-algebra and let $\pi_1$ and $\pi_2$ be representations of $\mathcal{A}$ on separable infinite Hilbert spaces $H_1$ and $H_2$ respectively. Assume the representations $\pi_1$ and $\pi_2$ are AUE. Then we have $
(H_1,\pi_1)\equiv
(H_2,\pi_2)$. 
\end{rem}

\begin{proof}
Since $H_1$ and $H_2$ are separable, we may assume $H_1=H_2$. Since the representations $\pi_1$ and $\pi_2$ are AUE there is a sequence $\parl{\mathcal{O}_k}_{k \in \Na}$ of unitary operators satisfying
\begin{equation}\label{eq.isoUltraPowers}
\pi_2(A) = \lim_{k \to \infty} \mathcal{O}_k \pi_1(A) \mathcal{O}^*_k \text{ for all } A \in \mathcal{A}. 
\end{equation}

Let $\mathcal{F}$ be a non-principal ultrafilter over $\mathbb{N}$ and consider the ultrapowers 
\[
\Pi_{k,\mathcal{F}}(H_1,\pi_1)  \text{ and }
\Pi_{k,\mathcal{F}}(H_2,\pi_2).
\]
 First define $\Phi\colon\Pi_{k,\mathcal{F}}H_1 \longrightarrow 
\Pi_{k,\mathcal{F}}H_2$ as the map $\Phi([(v_k)_k]_\mathcal{F})=[(v_k)_k]_\mathcal{F}$ induced by the identification $H_1=H_2$ as Hilbert spaces. We extend the function $\Phi$ to the maps in the representation by defining, for each $A\in \mathcal{A}$, the correspondence $\Phi [(\pi_1(A))_k]_\mathcal{F}=[(\mathcal{O}_k\pi_1(A)\mathcal{O}^*_k)_k]_\mathcal{F}$. For a fixed index $k\in \mathbb{N}$, the map $\mathcal{O}_k$ is a unitary transformation so the $k$-th component of the map $\Phi$ that sends $\pi_1(A)$ to $\mathcal{O}_k\pi_1(A)\mathcal{O}^*_k$ is an isomorphism of representations of $\mathcal{A}$. Moreover, by Equation \ref{eq.isoUltraPowers}, for each $A \in \mathcal{A}$ we have $\pi_2(A) = \lim_{k \to \infty} \mathcal{O}_k \pi_1(A)\mathcal{O}^*_k$, so the representation $\Pi_{k,\mathcal{F}}(H_2,\pi_2)$ is isomorphic to $\Pi_{k,\mathcal{F}}(H_2,\mathcal{O}_k\pi_1 \mathcal{O}^*_k)$ and thus $\Pi_{k,\mathcal{F}}(H_1,\pi_1) \equiv
\Pi_{k,\mathcal{F}}(H_2,\pi_2)$.

\end{proof}

Voiculescu's Theorem states that two separable C$^*$-algebras, say $\mathcal{A}$ and $\mathcal{B}$, are  AUE if there is a completely positive definite map $\Phi \colon \mathcal{A}\longrightarrow \mathcal{B}$ (see page 65 in \cite{ken}) such that $K(H)\cap \mathcal{A}\subseteq \ker(\Phi)$ (see Theorem II.5.3 of \cite{ken}). This last requirement is problematic in our context. Since we are considering  not only the regular representations (which have no compact operators in the generated C$^*$-algebra when the group is infinite), but arbitrary representations of discrete groups, it is likely that they may include compact operators. Thus, requiring the condition $\Phi(K(H)\cap\mathcal{A}) = 0$ is too restrictive. However, there are some consequences of Voiculescu's theorem which include a finer control on the behavior of compact operators under $\Phi$ that will work better in our setting.

\begin{defn}[Page 34 \cite{ken}]
Let $\mathcal{A}$ be a $C^*$-algebra and let $(H,\pi)$ be a representation of $\mathcal{A}$. We say that $\pi$
is non-degenerate if $\pi(\mathcal{A})H$ is dense in $H$.
\end{defn} 

\begin{fact}[Lemma II.5.7 of \cite{ken}]\label{fact:unitarilyequivalentcompact}

Let $\mathcal{A}$ be a $C^*$-subalgebra of the algebra of compact operators and let $\pi_1$ and $\pi_2$ non-degenerate representations of $\mathcal{A}$ on separable Hilbert spaces $H_1$ and $H_2$ respectively. Then, $\pi_1$ and $\pi_2$ are unitarily equivalent, i.e. there exists $U : H_1 \longrightarrow H_2$ unitary such that 
\[
\norm{\pi_2(A)-U\pi_1(A)U^*} = 0
\]
if and only if rank$(\pi_1(A)) = $rank$(\pi_2(A))$ for all $A \in \mathcal{A}$.

\end{fact}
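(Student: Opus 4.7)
The forward implication is immediate: if $U$ is unitary and $\pi_2(A)=U\pi_1(A)U^*$, then $\pi_2(A)$ and $\pi_1(A)$ are unitarily conjugate, hence have the same rank. The nontrivial direction is that the rank data determines the unitary equivalence class of a non-degenerate representation of a $C^*$-subalgebra of $K(\mathcal{H})$. My plan is to reduce this to a matching of multiplicities by invoking the classical structural description of such representations.

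The first step is to apply the structure theorem for $C^*$-subalgebras of compact operators: any such $\mathcal{A}$ is isometrically $*$-isomorphic to an at-most-countable $c_0$-direct sum $\mathcal{A}\cong\bigoplus_{\alpha\in I}^{c_0} K(\mathcal{H}_\alpha)$ for some family of separable Hilbert spaces $\{\mathcal{H}_\alpha\}_{\alpha\in I}$. Each elementary summand $K(\mathcal{H}_\alpha)$ admits (up to unitary equivalence) a unique non-zero irreducible representation, namely the defining action on $\mathcal{H}_\alpha$, and consequently every non-degenerate representation of $K(\mathcal{H}_\alpha)$ on a separable Hilbert space is unitarily equivalent to the amplification $\mathrm{id}_{\mathcal{H}_\alpha}\otimes 1_{\mathcal{K}_\alpha}$ on $\mathcal{H}_\alpha\otimes\mathcal{K}_\alpha$ for a unique multiplicity space $\mathcal{K}_\alpha$. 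Applying this separately to $\pi_1$ and $\pi_2$, non-degeneracy ensures no summand is lost and gives unitary equivalences $(\pi_j,H_j)\cong\bigoplus_{\alpha\in I}\bigl(\mathrm{id}_{\mathcal{H}_\alpha}\otimes 1_{\mathcal{K}_\alpha^{(j)}}\bigr)$ for $j=1,2$.

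It then suffices to prove $\dim\mathcal{K}_\alpha^{(1)}=\dim\mathcal{K}_\alpha^{(2)}$ for every $\alpha\in I$. For this I would fix a rank-one projection $P_\alpha\in K(\mathcal{H}_\alpha)\subseteq\mathcal{A}$; under the amplification description one computes at once that $\pi_j(P_\alpha)$ has rank exactly $\dim\mathcal{K}_\alpha^{(j)}$, so the rank hypothesis yields the equality. Choosing unitaries $U_\alpha:\mathcal{K}_\alpha^{(1)}\to\mathcal{K}_\alpha^{(2)}$ and setting $U:=\bigoplus_{\alpha\in I}\bigl(1_{\mathcal{H}_\alpha}\otimes U_\alpha\bigr)$ produces a unitary $U:H_1\to H_2$ with $U\pi_1(A)U^*=\pi_2(A)$ for all $A\in\mathcal{A}$.

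The main obstacle is really absorbed into the two structural inputs used above: the decomposition of $\mathcal{A}$ as a $c_0$-sum of elementary algebras, and the classification of non-degenerate representations of $K(\mathcal{H})$. Both rest on the existence of minimal projections in $\mathcal{A}$ and its weak closure, ultimately a consequence of the spectral theorem for self-adjoint compact operators, together with the fact that $K(\mathcal{H})$ has no non-trivial closed two-sided ideals. Once these ingredients are available, the extraction of multiplicities via ranks of rank-one projections is routine; this is presumably why the statement is quoted from \cite{ken} rather than proved in detail here.
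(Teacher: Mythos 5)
The paper does not prove this statement at all---it is quoted as a Fact directly from Lemma II.5.7 of \cite{ken}---and your blind reconstruction is correct and follows essentially the same route as the cited source: the structure theorem writing $\mathcal{A}$ as a $c_0$-direct sum of elementary algebras $K(\mathcal{H}_\alpha)$, the classification of non-degenerate representations of each $K(\mathcal{H}_\alpha)$ as amplifications $\mathrm{id}_{\mathcal{H}_\alpha}\otimes 1_{\mathcal{K}_\alpha^{(j)}}$, and the extraction of the multiplicities $\dim\mathcal{K}_\alpha^{(j)}$ as the ranks of the images of minimal projections. The one loose phrase is ``non-degeneracy ensures no summand is lost'': non-degeneracy only guarantees that $H_j$ is exhausted by the subspaces $\overline{\pi_j(K(\mathcal{H}_\alpha))H_j}$, not that each multiplicity space $\mathcal{K}_\alpha^{(j)}$ is nonzero, but since a vanishing multiplicity on one side forces the same on the other via your rank computation on a minimal projection, this does not affect the argument.
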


\begin{fact}[Theorem II.5.8 of \cite{ken}]\label{fact:unitarilyapproximatelyequivalentwithcompacts}

Let $\mathcal{A}$ a separable C$^*$-algebra, and let $\pi_1$ and $\pi_2$ non-degenerate representations of $\mathcal{A}$ on separable Hilbert spaces. Then,  $\pi_1$ and $\pi_2$ are AUE, if and only if  rank$(\pi_1(A)) = $rank$(\pi_2(A))$ for all $A \in \mathcal{A}$.

\end{fact}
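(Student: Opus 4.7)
The plan has an easy direction and a substantive one. For the forward direction, suppose $\pi_2(A) = \lim_k \mathcal{O}_k \pi_1(A) \mathcal{O}_k^*$ in norm with each $\mathcal{O}_k$ unitary. Each conjugate $\mathcal{O}_k \pi_1(A) \mathcal{O}_k^*$ has the same rank as $\pi_1(A)$, and for every finite $n$ the set of operators of rank at most $n$ in $B(H)$ is norm-closed (if $n+1$ independent vectors sat in the range of the limit, a small perturbation gives the same for the approximants). So $\text{rank}(\pi_2(A)) \leq \text{rank}(\pi_1(A))$, and the reverse inequality follows by symmetry.

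For the converse, I would decompose each $\pi_i$ using the ideal $\mathcal{J} = \pi_1^{-1}(K(H_1))$ of elements that $\pi_1$ sends to compact operators. The hypothesis of equal rank functions forces $\mathcal{J} = \pi_2^{-1}(K(H_2))$ as well, since compactness of an operator is detected by the finiteness of the ranks of the spectral projections of its absolute value outside any neighborhood of zero, and those spectral projections are uniformly approximable in norm by elements in the C$^*$-algebra generated by $\pi_i(A)$. Set $H_i^c = \overline{\pi_i(\mathcal{J})H_i}$, the essential subspace of $\pi_i|_{\mathcal{J}}$. Because $\mathcal{J}$ is a two-sided ideal, $H_i^c$ is invariant under $\pi_i(\mathcal{A})$ along with its orthogonal complement, so $\pi_i$ splits as $\pi_i^c \oplus \pi_i^r$.

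On the compact part, $\pi_1^c|_{\mathcal{J}}$ and $\pi_2^c|_{\mathcal{J}}$ are non-degenerate representations of the separable C$^*$-algebra $\mathcal{J}$ into compact operators whose rank functions coincide, and Fact \ref{fact:unitarilyequivalentcompact} supplies a unitary $U : H_1^c \to H_2^c$ with $\pi_2^c(J) = U \pi_1^c(J) U^*$ for every $J \in \mathcal{J}$; using a bounded approximate unit of $\mathcal{J}$ and non-degeneracy of $\pi_i^c$, this intertwining extends to all of $\mathcal{A}$, giving outright unitary equivalence of $\pi_1^c$ with $\pi_2^c$. On the residual part, each $\pi_i^r$ annihilates $\mathcal{J}$ and factors through the separable quotient $\mathcal{A}/\mathcal{J}$, and its image contains no nonzero compact operator by the very construction of the decomposition. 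Applying Voiculescu's theorem in its absorption form (a consequence of II.5.3), one obtains that $\pi_1^r$ is AUE to $\pi_1^r \oplus \pi_2^r$ and, symmetrically, $\pi_2^r$ is AUE to $\pi_1^r \oplus \pi_2^r$, so $\pi_1^r$ and $\pi_2^r$ are AUE. Taking direct sums of the unitary equivalence of the compact parts with the AUE implementations on the residual parts yields $\pi_1$ AUE to $\pi_2$.

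The main obstacle is the application of Voiculescu's absorption theorem on the residual side, whose standard formulation asks for a faithful representation whose image meets the compacts trivially. The latter requirement is automatic from the definition of $\mathcal{J}$, but faithfulness is not free: one either quotients $\mathcal{A}/\mathcal{J}$ further by the common kernel $\ker(\pi_1^r) \cap \ker(\pi_2^r)$ (verifying, via the rank hypothesis, that both residual kernels coincide so the quotient makes sense uniformly) or appeals to a version of the absorption result that tolerates non-faithful representations provided both sides vanish on the same ideal. A secondary issue, glossed over in the sketch, is checking that the decomposition $H_i = H_i^c \oplus (H_i^c)^\perp$ is controlled in a way compatible with the rank function across both representations, which one verifies through the functional calculus characterization of compactness already used to identify $\mathcal{J}$.
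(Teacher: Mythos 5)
This statement is imported by the paper as a black box (Theorem II.5.8 of Davidson's book); the paper gives no proof of its own, so your attempt can only be judged on its internal correctness. Your forward direction is fine, and the first half of your converse is sound: the equality $\pi_1^{-1}(K(H_1))=\pi_2^{-1}(K(H_2))=\mathcal{J}$ does follow from the rank hypothesis via the functional-calculus detection of compactness, and the essential subspaces of $\pi_i\!\restriction_{\mathcal{J}}$ are handled correctly by Lemma II.5.7 together with the approximate-unit extension argument.

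The genuine gap is your assertion that the residual representation $\pi_i^r$ ``contains no nonzero compact operator by the very construction of the decomposition,'' and the resulting claim that $\pi_1^r$ and $\pi_2^r$ are AUE to each other. Both fail. Take $\mathcal{A}=K(\ell_2)+\mathbb{C}I$, $\pi_1(T+\lambda I)=(T+\lambda I)\oplus \lambda I_{\mathbb{C}^2}$ on $\ell_2\oplus\mathbb{C}^2$ and $\pi_2(T+\lambda I)=(T+\lambda I)\oplus \lambda I_{\mathbb{C}^3}$ on $\ell_2\oplus\mathbb{C}^3$. These are non-degenerate, and $\mathrm{rank}(\pi_1(T+\lambda I))=\mathrm{rank}(\pi_2(T+\lambda I))$ for all $T,\lambda$ (both are $\infty$ when $\lambda\neq 0$, since $T+\lambda I$ is Fredholm, and both equal $\mathrm{rank}(T)$ when $\lambda=0$), so the theorem applies and $\pi_1,\pi_2$ are AUE. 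Here $\mathcal{J}=K(\ell_2)$, the essential subspace is $\ell_2$ in both cases, and the residual parts are $\lambda\mapsto\lambda I_{\mathbb{C}^2}$ and $\lambda\mapsto\lambda I_{\mathbb{C}^3}$: their images consist entirely of finite-rank operators, and they are \emph{not} AUE to each other since $\mathrm{rank}(\pi_1^r(I))=2\neq 3=\mathrm{rank}(\pi_2^r(I))$ (and your own forward direction shows AUE preserves rank). So the piecewise strategy --- exact unitary equivalence on the essential parts direct-summed with an AUE of the residual parts --- cannot prove the theorem: the approximating unitaries must mix the two summands, absorbing the finite-dimensional discrepancy of the residual parts into the infinite-dimensional essential part. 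Arranging that absorption (via Voiculescu's theorem applied to suitably chosen representations of $\mathcal{A}$, not of $\mathcal{A}/\mathcal{J}$ restricted to $(H_i^c)^\perp$) is precisely the content of Davidson's proof that your sketch does not supply; the faithfulness worry you flag at the end is a secondary issue by comparison.
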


Model theoretic consequences of the previous fact go back to 
unpublished work by C. Ward Henson, who pointed out to the first author of the paper that this result characterizes, up to elementary equivalence, expansions of Hilbert spaces with a self adjoint operator (in the form of the Weyl-von Neumann-Berg theorem).

\begin{lem}\label{lem:compactnoncompact}

Let $\mathcal{A}$ be a separable C$^*$-algebra. Then, every $*$-representation $\pi$ of $\mathcal{A}$ on a separable Hilbert space $H$, can be written as $\pi_c \oplus \pi_c^\perp$, where $\pi_c$ is composed only by compact operators and $\pi^\perp_c$ has no compact operators. Moreover, we can write $H = H_c\oplus H_c^\perp$, and the operators in $\pi_c$ and $\pi_c^\perp$ act on $H_c$ and $ H_c^\perp$ respectively.

\end{lem}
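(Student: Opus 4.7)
The plan is to construct $H_c$ as the essential subspace of the compact ideal inside $\overline{\pi(\mathcal{A})}$, and to use the self-adjointness of $\mathcal{B}$ to split off its orthogonal complement for free. Concretely, set $\mathcal{B}:=\overline{\pi(\mathcal{A})}$ (norm closure in $B(H)$) and $\mathcal{J}:=\mathcal{B}\cap K(H)$, which is a closed two-sided ideal of the $C^{*}$-algebra $\mathcal{B}$ because $K(H)$ is an ideal of $B(H)$. Then define
\[
H_c \; := \; \overline{\operatorname{span}\{Tv : T\in\mathcal{J},\ v\in H\}}.
\]

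The key invariance properties fall out quickly. First, $H_c$ is $\mathcal{B}$-invariant: for $b\in\mathcal{B}$ and $Tv$ with $T\in\mathcal{J}$, $v\in H$, the product $bT$ lies in $\mathcal{J}$ by the ideal property, hence $b(Tv)=(bT)v\in\mathcal{J}H\subseteq H_c$, and this extends by continuity. Next, $H_c^{\perp}$ is $\mathcal{B}$-invariant because $\mathcal{B}$ is closed under adjoints: if $w\in H_c^{\perp}$ and $b\in\mathcal{B}$, then $\langle bw,u\rangle=\langle w,b^{*}u\rangle=0$ for every $u\in H_c$, using $b^{*}u\in H_c$. This yields the orthogonal decomposition $H=H_c\oplus H_c^{\perp}$ into $\pi(\mathcal{A})$-invariant subspaces, and we set $\pi_c:=\pi|_{H_c}$, $\pi_c^{\perp}:=\pi|_{H_c^{\perp}}$.

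The labels on the two pieces come from the action of $\mathcal{J}$. Picking a bounded approximate unit $(e_{\alpha})\subseteq\mathcal{J}$—which exists since $\mathcal{J}$ is itself a $C^{*}$-algebra—one shows that $e_{\alpha}$ converges strongly to the projection onto $H_c$, so that $\mathcal{J}$ acts non-degenerately on $H_c$; this bundles the compact operators produced by $\pi$ onto $H_c$. On the other side, if $w\in H_c^{\perp}$ and $T\in\mathcal{J}$ then $Tw\in H_c$ by definition and $Tw\in H_c^{\perp}$ by invariance, so $Tw=0$; consequently $\pi_c^{\perp}$ annihilates $\mathcal{J}$ and factors through the quotient $\mathcal{B}/\mathcal{J}$, which is exactly the compact-free part of the image.

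The step I expect to be the main obstacle is strengthening the second conclusion to the statement that no nonzero operator of $\pi_c^{\perp}(\mathcal{A})$ is compact in $B(H_c^{\perp})$ (not merely that the elements of $\mathcal{J}$ vanish there). The natural route is a spectral argument by contradiction: assuming $\pi(A)|_{H_c^{\perp}}$ is a nonzero compact operator, $\pi(A^{*}A)|_{H_c^{\perp}}$ is positive, compact and nonzero, hence has an isolated positive eigenvalue $\lambda$; continuous functional calculus in $\mathcal{B}$ with an $f$ satisfying $f(0)=0$, $f(\lambda)=1$ and supported near $\lambda$ then produces $f(\pi(A^{*}A))\in\mathcal{B}$ whose restriction to $H_c^{\perp}$ is a nonzero finite-rank spectral projection. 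Showing that this element must also be compact on $H_c$ (so that it belongs to $\mathcal{J}$ and therefore acts as $0$ on $H_c^{\perp}$, a contradiction) is the delicate part, and handling the potentially non-compact $H_c$-component of $f(\pi(A^{*}A))$ is where the care must go.
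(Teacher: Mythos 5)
Your first two paragraphs are correct, and they reproduce in substance exactly the decomposition the paper imports from Davidson: with $\mathcal{J}=\overline{\pi(\mathcal{A})}\cap K(H)$ and $H_c=\overline{\mathcal{J}H}$, both $H_c$ and $H_c^{\perp}$ are invariant, $\mathcal{J}$ acts non-degenerately on $H_c$, and every element of $\mathcal{J}$ vanishes on $H_c^{\perp}$. Since the paper's own proof is only the citation ``it follows from the proof of Corollary II.5.9 of \cite{ken}'', up to this point you are doing strictly more work than the text does.

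The genuine gap is your last paragraph, and it is not merely a delicate step you have postponed: the strengthening you are aiming at there is false, so no amount of care will close it. Take $\mathcal{A}=C(\sigma(U))$ represented by functional calculus of the unitary $U=\operatorname{diag}(1,\lambda_1,\lambda_2,\dots)$ on $\ell^2$ with basis $(e_n)_{n\geq 0}$, where $\lambda_n\to 1$ and $\lambda_n\neq 1$ (this is even a representation of $G=\mathbb{Z}$). Here $f(U)$ is compact iff $f(1)=0$, so $\mathcal{J}=\{f(U):f(1)=0\}$, $H_c=\overline{\operatorname{span}}\{e_n:n\geq 1\}$ and $H_c^{\perp}=\mathbb{C}e_0$; the restriction of the algebra to $H_c^{\perp}$ is $\mathbb{C}\cdot\operatorname{id}$, which consists of nonzero rank-one, hence compact, operators. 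No alternative choice of invariant decomposition repairs this: if every $\pi(A)|_{H_c}$ were compact then $H_c$ would be finite dimensional (the restriction of $U$ to an invariant subspace is unitary there), and then $f(U)|_{H_c^{\perp}}$ with $f(1)=0$ and $f(\lambda_n)=1/n$ would be a nonzero compact operator on $H_c^{\perp}$. Your spectral-projection strategy fails for exactly the reason you flagged: $f(\pi(A^*A))$ need not be compact on $H_c$, hence need not lie in $\mathcal{J}$, and in the example above this is precisely what happens. The statement that is true, that your construction already establishes, and that is all the paper actually uses downstream (the rank bookkeeping in Theorem \ref{Ao-stabilitypertubations}: a finite-rank $\pi(A)$ is compact, hence lies in $\mathcal{J}$, hence vanishes on $H_c^{\perp}$) is the weaker one: every compact operator belonging to $\overline{\pi(\mathcal{A})}$ is supported on $H_c$ and annihilates $H_c^{\perp}$, while $\mathcal{J}$ acts non-degenerately on $H_c$. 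You should stop after your second and third paragraphs and record the conclusion in that weaker form rather than as ``$\pi(\mathcal{A})|_{H_c^{\perp}}$ contains no nonzero compact operator of $B(H_c^{\perp})$''.
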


\begin{proof}
    It folows from the proof Corollary II.5.9 of \cite{ken}. 
\end{proof}

We now introduce perturbations in our setting. 
The reader may want to check \cite{BY-perturbations} for an introduction to the subject. The reader may also want to check \cite{itai} for the theory of perturbations applied to expansions of Hilbert spaces with a single unitary map.

\begin{defn}\label{defn:pertubations}
Let $G$ be a countable group, let $\pi$ be a unitary representation of $G$ in an infinite dimensional Hilbert space $H$, and let $IHS_\pi=Th(H,\pi)$. Fix $\parl{g_n}_{n \in \Na}$ an enumeration of $G$. Let $(H_1, \pi_1)$ and $(H_2, \pi_2)$ be models of $IHS_\pi$ of the same density character and let $\varepsilon\geq 0$. We define an \textit{$\varepsilon$-perturbation} between $(H_1,\pi_1)$ and $(H_2,\pi_2)$ to be an isometric isomorphism of Hilbert spaces $U:H_1\cong H_2$ which also satisfies
\[
\sum_{n\geq 0}\frac{1}{2^n}\| U \pi_1(g_n) U^{-1}-\pi_2(g_n)\|\leq \varepsilon.
\]
The set of all $\varepsilon$-perturbations will be denoted by $\text{Pert}_\varepsilon((H_1,\pi_1),(H_2,\pi_2))$ or simply by $\text{Pert}_\varepsilon(H_1,\pi_1)$  if $(H_2,\pi_2)=(H_1,\pi_1)$.

Assume now that $(H_1,\pi_1)$ is saturated and strongly homogeneous and let $A\subset H_1$ be small and $p,q\in S_n(A)$. Given $\varepsilon\geq 0$, we say that $d_{pert}(p,q)\leq \varepsilon$ if there is a unitary map $U$ on $H_1$ and realizations $\vec a\models p$, $ \vec b\models q$ in $H_1$ such that $U\in \text{Pert}_\varepsilon(H_1,\pi_1)$, $U\! \upharpoonright_A=id_A$ and $\|U(\vec a)-\vec b\|\leq \varepsilon$. We say $IHS_\pi$ is $\aleph_0$-stable up to perturbations if for any $A\subset H_1$ separable, 
    the density character of $(S_1(A),d_{pert})$ is countable.

We say that $(H_1, \pi_1)$ is $\aleph_0$-saturated over $(H_0, \pi_0)$ up to perturbations if for any $(H_2, \pi_2)\succeq (H_0,\pi_0)$ with $\dim(H_2\cap H_1^{\perp}) \leq \aleph_0$ and every $\varepsilon>0$ there is an $\varepsilon$\textit{-perturbation} between $(H_2,\pi_2)$ and an elementary substructure of $(H_1,\pi_1)$ that fixes $(H_0,\pi_0)$ pointwise. 

We say the models $(H_1, \pi_1)$ and $(H_2, \pi_2)$ are \textit{approximately isomorphic} if for each $\varepsilon > 0$ there exists an $\varepsilon$-peturbation between them. The theory $IHS_\pi$ is called $\Ao$\textit{-categorical up to perturbations} if each pair of separable models of $IHS_\pi$ are approximately isomorphic.



\end{defn}
    
\begin{rem}\label{rem:satimpliesstab}
Fix a theory $IHS_\pi$. Assume that for every separable $(H_0,\pi_0)\models IHS_\pi$ there is a separable $(H_1,\pi_1)\succeq (H_0,\pi_0)$ which is $\aleph_0$-saturated over $(H_0,\pi_0)$ up to perturbations. We claim that if this is the case, then $IHS_\pi$ is 
$\aleph_0$-stable up to perturbations. Indeed, by Lowenheim-Skolem any separable subset $A$ of a model of $IHS_\pi$ is a subset of a separable model $(H_0,\pi_0)\models IHS_\pi$. Then, if $(H_1,\pi_1)\succeq (H_0,\pi_0)$ is separable and $\aleph_0$-saturated over $(H_0,\pi_0)$, the density character of $(S_1(H_0),d_{pert})$ is countable and thus so is the case for $(S_1(A),d_{pert})$.
\end{rem}

\section{Hilbert spaces expanded by a representation of a finite group $G$}\label{rep:finite}

In this section we will study unitary representations of a finite group $G$ on a separable infinite dimensional Hilbert space $H$. Let $\mathcal{L}_{\pi}$ be as in Definition \ref{theoryoftheexpansion}. Let $\vec a\in H^n$, we will write $\tp(\vec a)$ for the type in the sense of Hilbert spaces and $\tp_\pi(\vec a)$ for the type in the extended language $\mathcal{L}_{\pi}$. Similarly, we write $\qftp_\pi(\vec a)$ for the quantifier free type of the tuple in the extended language.

We will show that for any such representation $\pi$, the theory $IHS_\pi=Th(H,\pi(g):g\in G)$ is $\Ao$-categorical, has quantifier elimination and is $\Ao$-stable.

Since $G$ is finite, by Fact \ref{decompositionRegularRepresentation} there are finitely many irreducible representations of $G$ and all of them are finite dimensional. Let $W_1,\dots,W_k$ be a list of these representations and, after reorganizing the list if necessary, we may assume there is $m\leq k$ such that $W_1, \ldots, W_{m}$ are the irreducible representations of $G$ having infinitely many copies in $(H,\pi(g):g\in G)$. Notice that, since $H$ is infinite dimensional and separable, the number $m$ can not be zero, and the number of copies of any $W_i$ for $1\leq i\leq k$ is at most $\Ao$. We can also define $H^{\text{fin}} = V_{m+1} \oplus \ldots \oplus V_{k}$ as the direct sum of the remaining irreducible representations, the ones appearing only finite many times in $H$, say with multiplicity $d_{m+1}, \ldots, d_{k}$ respectively. Then, if $m+1\leq i \leq k$, we have $V_i\cong W_i^{d_i}$. Notice that $k$ could be equal to $m$ and thus $H^{\text{fin}}$ could be the zero subspace. Finally write 
\[
H \cong \bigoplus_{t=1}^\infty W_1\bigoplus  \dots \bigoplus_{t=1}^\infty W_m \bigoplus H^{\text{fin}} 
\]


\begin{defn}\label{def:unitaryRepresentationGFinite}

For each $m+1\leq i \leq k$ we call $V_i = \bigoplus_{t=1}^{d_i} W_i $ a \emph{finite component} of $H$. On the other hand, for each $1\leq  i \leq m$ we call $H_i=\bigoplus_{t=1}^{\infty} W_i$ an \emph{infinite component} of $H$. Finally by a \emph{component} we mean a finite component or an infinite component. Also, by $H^{\text{inf}}$ we mean the sum $H_1\oplus \ldots \oplus H_m$.

\end{defn}

\begin{prop}\label{ProyeccionesPierreSonDefinibles}

The projections from $H$ onto each of its components are definable in the theory $IHS_\pi$.

\end{prop}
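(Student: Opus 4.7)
The plan is to invoke the explicit formula for isotypic projections recalled in Fact \ref{fact:projectionsPierre} (reinterpreted operator-theoretically in Remark \ref{rem:projectionsPierre}) and observe that the components introduced in Definition \ref{def:unitaryRepresentationGFinite} are precisely the isotypic components of $H$ with respect to the irreducible representations $W_1,\dots,W_k$. Concretely, for $1\le i\le m$ the infinite component $H_i$ is the isotypic component of $H$ associated with $W_i$, and for $m+1\le i\le k$ the finite component $V_i$ is the isotypic component associated with $W_i$. Hence the projection onto any component is exactly the operator
\[
P_i \;=\; \frac{n_i}{|G|}\sum_{g\in G}\chi_i(g)^*\pi(g),
\]
and the problem reduces to showing that $P_i$ is a definable function in $\mathcal{L}_\pi$.

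The first step is to justify this reduction: although Fact \ref{fact:projectionsPierre} is stated for finite-dimensional $V$, I would use Remark \ref{rem:betterthinkfinitely} to reduce the action of $G$ on $H$ to the action on finite-dimensional $G$-invariant subspaces (for instance those of the form $\mathrm{span}\{\pi(g)x : g\in G\}$), where the formula for $P_i$ applies. Since the formula defines a bounded linear operator on $H$ that restricts correctly to each such finite-dimensional subspace, by density $P_i$ coincides globally with the projection onto the $W_i$-isotypic component of $H$.

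The second step is to verify that $P_i$ is $\mathcal{L}_\pi$-definable. Each $\pi(g)$ is a function symbol of $\mathcal{L}_\pi$, so it suffices to observe that the complex scalars $\frac{n_i}{|G|}\chi_i(g)^*$ can be realized, or uniformly approximated, by terms of $\mathcal{L}$. Because $G$ is finite, every $\pi(g)$ has order dividing $|G|$ and its eigenvalues are $|G|$-th roots of unity, so $\chi_i(g)^*=\mathrm{Tr}(\pi(g))^*$ is a finite sum of elements of $\{e^{i\theta}:\theta\in 2\pi\mathbb{Q}\}$; multiplication by each of these is a symbol of $\mathcal{L}$. The real scalar $n_i/|G|$ is a rational, and the operations $\dot 2$ and $\tfrac{x+y}{2}$ together with subtraction generate dyadic scalar multiplication, whose uniform closure includes multiplication by any real; combined with the addition operation (itself recoverable from $\dot 2$ and $\tfrac{x+y}{2}$), this makes the whole linear combination $P_i$ a definable function.

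The main obstacle is not conceptual but notational: carefully checking that the particular complex coefficients appearing in $P_i$ can indeed be obtained from terms of $\mathcal{L}$, either exactly or as uniform limits of terms, so that the continuous-logic definition of definability applies. Once this bookkeeping is settled, the conclusion is immediate from Fact \ref{fact:projectionsPierre}.
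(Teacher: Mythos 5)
Your proposal is correct and follows essentially the same route as the paper: both identify the components with the $W_i$-isotypic subspaces, invoke the explicit formula $P_i=\frac{n_i}{|G|}\sum_{g\in G}\chi_i(g)^*\pi(g)$ from Fact \ref{fact:projectionsPierre}, and verify that it acts as the desired projection by reducing to finite-dimensional invariant pieces (the paper decomposes an arbitrary vector into its irreducible summands and computes termwise, while you pass through cyclic subspaces and density, which amounts to the same thing). Your extra bookkeeping on why the coefficients $\frac{n_i}{|G|}\chi_i(g)^*$ are realizable from the symbols of $\mathcal{L}$ is a point the paper simply asserts in Remark \ref{rem:projectionsPierre}, and your justification of it (eigenvalues of $\pi(g)$ on $W_i$ are $|G|$-th roots of unity, hence in $\{e^{i\theta}:\theta\in 2\pi\Qu\}$, and rational scalars are uniform limits of dyadic combinations) is sound.
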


\begin{proof}

Fix $1 \leq i_0 \leq k$ and take $n_{i_0}$ and $\chi_{i_0}(g)$ as in Fact \ref{fact:projectionsPierre}(2). Then (see Remark \ref{rem:projectionsPierre}) the function $P_{i_0}\colon H \longrightarrow H$, defined as 
\begin{equation}\label{eq:projectiononH}
P_{i_0}v=\frac{n_{i_0}}{|G|} \sum_{g \in G}\chi_{i_0}(g)^*\pi(g)v
\end{equation}
is a definable function in the language $\mathcal{L}_\pi$. Given $v\in H$, we can write $v$ as the sum 
\[
\sum_{t= 1}^\infty \sum_{1\leq i \leq m} v_i^{(t)} + \sum_{m+1\leq i \leq k} v_i \in H^{\text{inf}}\oplus H^{\text{fin}},
\]
where for $1\leq i \leq m$ the vector $v_i^{(t)}$ denotes the projection of $v$ on the $t$-th copy of $W_i$ in $H^{\text{inf}}$ and for $m+1 \leq i \leq k$ the vector $v_{i}$ denotes de projection of $v$ on $V_i$ in $H^{\text{fin}}$. We will prove that the equation defined in \ref{eq:projectiononH} defines the projection onto $H_{i_0}$. We will do the proof for some infinite component $H_{i_0}$ of $H$, the proof for a finite component of $H$ is analogous. By definition
\[
P_{i_0}v = \dfrac{n_{i_0}}{|G|} \Sum{g \in G}{}\chi_{i_0}(g)^* \pi(g)(\sum_{t= 1}^\infty \sum_{1\leq i \leq m} v_i^{(t)} + \sum_{m+1\leq i \leq k} v_i) = 
\]
\[
 \sum_{t = 1}^\infty \sum_{1\leq i \leq m} \dfrac{n_{i_0}}{|G|} \Sum{g \in G}{}\chi_{i_0}(g)^* \pi(g)  v_i^{(t)} + \sum_{m+1 \leq i \leq k} \dfrac{n_{i_0}}{|G|} \Sum{g \in G}{}\chi_{i_0}(g)^* \pi(g) v_i.
\]
Notice that we can restrict $\pi(g)$ to each copy of the irreducible subrepresentations $W_i$ of $\pi$
\[
\pi(g) v_i^{(t)} = \pi(g)\!\!\upharpoonright\!\!_{W^{(t)}_{i}}  v_i^{(t)} \text{ and }  \pi(g) v_i = \pi(g)\!\!\upharpoonright\!\!_{W_{i}} v_i,
\]
then using the projection $p_{i_0}$ given by Fact \ref{fact:projectionsPierre}(2), we could write $P_{i_0}v$ as
\[
P_{i_0}v = \sum_{t = 1}^\infty\sum_{1 \leq i \leq m} p_{i_0} v_i  + \sum_{m+1 \leq i \leq k} p_{i_0} v_i =  \sum_{t= 1}^\infty v_{i_0}^{(t)}.
\]
We can conclude that the function $P_{i_0}$ is the projection of $H$ onto the infinite component $H_{i_0}$ of $H$ and it is definable.
\end{proof}

\begin{thm}\label{GFinitioAocategoricidad}
Let $\pi$ be a unitary representation of $G$ on a Hilbert space $H$ such that $H = H^{\text{inf}}$. Then the theory $IHS_\pi$ is $\Ao$-categorical.
\end{thm}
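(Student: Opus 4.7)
The plan is to show that any two separable models $(H^1,\pi^1)$ and $(H^2,\pi^2)$ of $IHS_\pi$ are isomorphic, by exploiting the canonical decomposition into isotypic components provided by Proposition \ref{ProyeccionesPierreSonDefinibles}. First I would observe that since each projection $P_i$ is definable uniformly across all models, every $(H',\pi')\models IHS_\pi$ inherits the decomposition $H'=H'_1\oplus\dots\oplus H'_m$ where $H'_i=P_i(H')$. Moreover, the theory $IHS_\pi$ contains the axioms $\sup_{x}\|P_j x\|=0$ for $m+1\leq j\leq k$ (reflecting the hypothesis $H=H^{\mathrm{inf}}$), so the projections onto the finite components vanish in every model.

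Next I would verify that $IHS_\pi$ forces each $H'_i$, for $1\leq i\leq m$, to be infinite dimensional. Fixing an orthonormal basis $e_1,\dots,e_{n_i}$ of $W_i$ with matrix coefficients $a_{lj}(g)$ determined by the action of $g$, the statement ``there exist $N$ pairwise orthogonal copies of $W_i$ inside $\mathrm{Im}(P_i)$'' can be written as a first-order sentence in $\mathcal{L}_\pi$ asserting the existence of $Nn_i$ vectors $(f^{(s)}_j)_{1\leq s\leq N,\,1\leq j\leq n_i}$ that are orthonormal, satisfy $P_i f^{(s)}_j=f^{(s)}_j$, and transform as $\pi(g)f^{(s)}_j=\sum_{l}a_{lj}(g)f^{(s)}_l$ for every $g\in G$. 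Since this holds for every $N$ in the standard model $H$, all such sentences belong to $IHS_\pi$, and hence $H'_i$ is infinite dimensional in any model.

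Then, for each $i$, I would argue that $H^1_i$ and $H^2_i$ are isomorphic as unitary $G$-representations. Since each $H^j_i$ is a Hilbert space that decomposes as an orthogonal sum of copies of the irreducible $W_i$ (which is finite dimensional), separability together with the previous paragraph forces exactly $\aleph_0$ copies. Thus both $H^1_i$ and $H^2_i$ are isomorphic to the standard Hilbert space representation $W_i\otimes \ell^2(\mathbb{N})$, yielding an isometric $G$-equivariant isomorphism $U_i\colon H^1_i\to H^2_i$. Assembling these gives a unitary $U=\bigoplus_{i=1}^m U_i\colon H^1\to H^2$ intertwining $\pi^1$ and $\pi^2$, witnessing an isomorphism of models in $\mathcal{L}_\pi$.

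The main obstacle I anticipate is making the axiomatization of ``infinitely many copies of $W_i$'' precise in continuous logic, and confirming that the conditions used (lying in $\mathrm{Im}(P_i)$, and transforming according to the matrix coefficients of $W_i$) can truly be expressed as restricted formulas. The first is handled by the definability of $P_i$ from Proposition \ref{ProyeccionesPierreSonDefinibles}, and the second by the explicit relations $\pi(g)f_j=\sum_l a_{lj}(g)f_l$. Once these axioms are in place, everything reduces to the standard uniqueness of separable Hilbert space representations with prescribed isotypic multiplicities, and no back-and-forth is really required beyond choosing bases in the $\aleph_0$-many copies.
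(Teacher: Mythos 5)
Your proposal is correct and follows essentially the same route as the paper: both use the definable isotypic projections $P_i$ to axiomatize that each $W_i$ with $1\leq i\leq m$ has infinite multiplicity (via schemes asserting arbitrarily many orthonormal vectors in $\mathrm{Im}(P_i)$) and that the remaining $P_j$ vanish, then conclude by uniqueness of separable representations with prescribed multiplicities. The only cosmetic difference is that you additionally impose the matrix-coefficient transformation law on the witnesses, whereas the paper gets by with almost-orthonormality and almost-invariance under $P_i$ followed by a Gram--Schmidt correction; the dimension count in the isotypic component makes either version work.
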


\begin{proof}
First note that the theory $IHS_\pi$ includes as part of the information that $(\pi(g):g\in G)$ is a representation of $G$ by unitary maps.

By Proposition \ref{ProyeccionesPierreSonDefinibles}, for each $1\leq i \leq k$ the projection $P_i$ onto $H_i$ is definable in the theory $IHS_\pi$. Fix some $1\leq i \leq m$, and consider the following scheme indexed by $n$. To simplify the notation, the indexes $t, s$ will range over $\parl{1, \ldots, n}$ and the indexes $j_1, j_2$ and $j$ will range over $\parl{1, \ldots, n_i}$ (recall that $n_i$ is the degre of $W_i$):
\begin{equation}\label{EsquemaProyeccionesPierre}
\inf_{\overline{v}_t} \max \Parl{\max_{t,j} \Parl{\big|\norm{v^j_t}-1\big|, \norm{P_iv^j_t - v^j_t}}, \max_{t < s,j_1 \leq j_2}\big|\gen{v^{j_1}_t,v^{j_2}_s}\big|, \max_{t,j_1 < j_2}\big|\gen{v^{j_1}_t,v^{j_2}_t} \big|} = 0.
\end{equation}
In the sentence above $\overline{v}_t$ denotes the set of vectors $\Parl{v^1_t, \dots, v^{n_i}_t}$. The $n$-th sentence in the Scheme \ref{EsquemaProyeccionesPierre} indicates that there are $n$ collections of $n_i$ vectors $\Parl{v^1_t, \dots, v^{n_i}_t}_{t=1}^n$ which are almost orthonormal and almost invariant under the projection $P_i$. Observe that this scheme belongs to $IHS_\pi$ for any $1\leq i\leq m$, that is, for any of the irreducible representations $W_i$ of $G$ appearing in $H$. Additionally, the sentence
\begin{equation}\label{eq:projectionsZero}
\sup_{v}\max_{m+1\leq i\leq k}\|P_i(v)\|=0,
\end{equation}
indicates that the irreducible representation $W_i$ for $i\geq m+1$ are not represented in $(H,\pi)$.

Now let $(K,\rho)\models IHS_\pi$ be separable. Since $K$ is a model of the theory \textit{IHS}, we have that for all $n \in \Na$ there is $\varepsilon > 0$, such that for $j_1 \leq j_2$ and $j$, and $s < t$ as above if there are vectors in $K$ satisfying \[
|1-\norm{v^{j}_t}_2|< \varepsilon \text{ and } |\langle v^{j_1}_t,v^{j_2}_s\rangle| < \varepsilon
\]
then (after applying Gram-Schmidt and taking a different family) there is a family of exact witnesses for the property described in the equation. Thus we may assume the set $\Parl{v^{1}_t, \ldots, v^{n_i}_t}$ is orthonormal to the set $\Parl{v^{1}_s, \ldots, v^{n_i}_s}$ when $t\neq s$. Now, for all $n \in \Na$ and each $1\leq i \leq m$, we shall prove there are $n$ orthogonal copies of $W_i$ in $K$. For each $t$ consider the vectors 
\[
w^1_t := \frac{P_iv^1_t}{\norm{P_iv^1_t}}, \ldots, w^{n_i}_t := \frac{P_iv^{n_i}_t}{\norm{P_iv^{n_i}_t}}.
\]
It follows from Scheme \ref{EsquemaProyeccionesPierre}, that the set $\Parl{w^1_t, \dots, w^{n_i}_t}$ forms a basis for a copy of the irreducible representation $W_i$. Furthermore if $ s < t $ the set $\Parl{w^1_t, \ldots, w^{n_i}_t}$ is orthogonal to the set $\Parl{w^1_s, \ldots, w^{n_i}_s}$. Thus, for any irreducible representation $W_i$ appearing in $H^{\inf}$, actually we can find a countable number copies of it in $K$.\\

Moreover, by Proposition \ref{ProyeccionesPierreSonDefinibles} we can build a first order sentence axiomatizing that any vector $v \in K$ (or any other model of $IHS_\pi$) can be written as $v = \sum_{1\leq i \leq m} P_i v$. Therefore, since both $(H,\pi)$ and $(K,\rho)$ are separable, the multiplicity of each $W_i$ in $K$ is $\Ao$, and $K$ is only composed of copies of $W_i$ for $1\leq i \leq m$. Since both $K$ and $H$ contain $\Ao$ copies of each irreducible representation $W_1, \ldots, W_m$, and no copy of $W_i$ for $m+1\leq i\leq k$, the two representations are isomorphic.
\end{proof}

We can extend Theorem \ref{GFinitioAocategoricidad} to a general unitary representation $\pi$ of $G$ acting on a separable Hilbert space $H$:

\begin{cor}\label{cor:wCategoricityGFinite}
Let $\pi$ be a  representation of $G$ on an infinite dimensional Hilbert space. Then the theory $IHS_\pi$ is $\Ao$-categorical.

\end{cor}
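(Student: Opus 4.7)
The plan is to reduce Corollary \ref{cor:wCategoricityGFinite} to Theorem \ref{GFinitioAocategoricidad} by isolating the finite-multiplicity part of $(H,\pi)$ and showing its structure is completely determined by $IHS_\pi$. Decompose $H \cong H^{\text{inf}} \oplus H^{\text{fin}}$ with $H^{\text{fin}} = V_{m+1} \oplus \cdots \oplus V_k$ and $V_i \cong W_i^{d_i}$ for $m+1 \leq i \leq k$ as in Definition \ref{def:unitaryRepresentationGFinite}. By Proposition \ref{ProyeccionesPierreSonDefinibles}, the projection $P_i$ onto each $V_i$ (respectively each $H_i$) is $\mathcal{L}_\pi$-definable. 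I first want to record, as part of $IHS_\pi$, enough information to pin down each multiplicity.

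For $m+1 \leq i \leq k$, the range of $P_i$ is finite-dimensional of dimension $N_i := n_i d_i$, and this exact dimension is expressible by the conjunction of two sentences: one asserting the existence of $N_i$ orthonormal vectors $v_1,\dots,v_{N_i}$ in the range of $P_i$, and one asserting the spanning condition
\[
\sup_v \inf_{a_1,\dots,a_{N_i}} \Bigl\| P_i v - \sum_{j=1}^{N_i} a_j v_j \Bigr\| = 0,
\]
paired with the orthonormality constraints on the $v_j$'s. For $1 \leq i \leq m$, the Scheme \ref{EsquemaProyeccionesPierre} from the proof of Theorem \ref{GFinitioAocategoricidad} already belongs to $IHS_\pi$ and forces the range of $P_i$ to contain arbitrarily many orthogonal copies of $W_i$. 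Finally, the sentence $\sup_v \| v - \sum_{i=1}^k P_i v \| = 0$ expresses that every vector decomposes through the isotypic projections.

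Now let $(K,\rho) \models IHS_\pi$ be separable. The finite-dimension sentences force each isotypic component of $K$ with $m+1 \leq i \leq k$ to have dimension exactly $N_i$, hence exactly $d_i$ copies of $W_i$ by Fact \ref{irreducibleDecomp-multiplicity}. The scheme argument, run exactly as in the proof of Theorem \ref{GFinitioAocategoricidad}, produces $\aleph_0$ orthogonal copies of each $W_i$ with $1 \leq i \leq m$ inside $K$, and separability forces this multiplicity to be exactly $\aleph_0$. Consequently $(K,\rho)$ admits the same decomposition into isotypic components with the same multiplicities as $(H,\pi)$, and assembling unitary intertwiners component by component yields an $\mathcal{L}_\pi$-isomorphism $(H,\pi) \cong (K,\rho)$.

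The only mild obstacle is ensuring that the finite-dimensionality of $\text{range}(P_i)$ is correctly encoded in continuous logic on a definable subspace rather than the whole Hilbert space, but this is standard once one works inside $\text{range}(P_i)$ via the definable projection. No new representation-theoretic input beyond what is used for Theorem \ref{GFinitioAocategoricidad} is required.
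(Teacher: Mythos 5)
Your proposal is correct and follows essentially the same route as the paper: keep Scheme \ref{EsquemaProyeccionesPierre} for the infinite components and add, for each $m+1\leq i\leq k$, sentences pinning down $\dim(\mathrm{range}(P_i))=n_id_i$ (the paper's sentence \ref{equation:SentenceIrreducibleFinMult} combines your existence and spanning conditions into one, writing the optimal coefficients as inner products $\gen{v_t^j,P_iv}$ rather than quantifying over scalars), then match multiplicities component by component. The only cosmetic difference is that the paper appeals to Fact \ref{fact:projectionsPierre} where you cite Fact \ref{irreducibleDecomp-multiplicity}, and it spells out the Gram--Schmidt passage from approximate to exact witnesses that you leave implicit.
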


\begin{proof}
Following the notation of Definition \ref{def:unitaryRepresentationGFinite} associated to the model $(H,\pi)$, we get
a new axiomatization by keeping Scheme \ref{EsquemaProyeccionesPierre} and replacing the sentence \ref{eq:projectionsZero} for a collections of sentences describing the dimension and multiplicity of each irreducible representation $W_{i}$ in $H^{\text{fin}}$. Fix some $m+1\leq i \leq k$, as in Theorem \ref{GFinitioAocategoricidad}, to simplify the notation, the indexes $s, t$ will range over $\parl{1, \ldots, d_i}$, and the index $j$ will range over $\parl{1, \ldots, n_i}$. Consider the following sentence from $IHS_\pi$:
\begin{equation}\label{equation:SentenceIrreducibleFinMult}
\inf_{\overline{v}_t} \sup_{\norm{v}=1}\max\lbrace\max_{t, j} \Parl{\big|\norm{v^j_t}-1\big|, \norm{P_iv^j_t - v^j_t}},\max_{s<t,j}\big|\gen{v^j_t,v^j_s}\big|, \norm{P_iv-\sum_{t=1}^{d_i}\sum_{j=1}^{n_i}{\gen{v_t^j,P_iv}v_t^j}}\rbrace = 0.
\end{equation}
In the sentence, $\overline{v}_t$ denotes the set of vectors $\Parl{v^1_t, \dots, v^{n_i}_t}$, and recall from the beginning of the section that for each $1+m \leq  i \leq k$ the multiplicity of $W_i$ in $H^{\text{fin}}$ is $d_{i}$, and $n_i$ is the dimension of $W_{i}$. As we shall prove below, the sentence in \ref{equation:SentenceIrreducibleFinMult} axiomatizes the presence of exactly $d_i$ copies of $W_i$ in $H^{\text{fin}}$.\\

Let $(K,\rho)$ be a separable model of $IHS_\pi$, and let $\varepsilon > 0$. We can assume that for each $t$, the family $\parl{v^{1}_t,\ldots, v^{n_i}_t}$ given by the sentence 
(\ref{equation:SentenceIrreducibleFinMult}) satisfies
\[
|\norm{P_iv^1_t}-1| < \varepsilon,\ldots,|\norm{P_iv_t^{n_i}}-1| < \varepsilon.
\]
Furthermore, using that $K\models IHS$ is $\aleph_0$-saturated (or by choosing $\varepsilon$ sufficiently small), there is subspace $K_i$ of $K$ with dimension $n_id_i$ invariant under the action of $\pi$ with $P_i(K_i)=K_i$. Thus by Fact \ref{fact:projectionsPierre} there are at least $d_i$ different copies of $W_i$ in $K$. Moreover, that is the exact number of copies of $W_i$ in $K$. Suppose there is one more copy of $W_i$ with vector basis $w_1,\ldots, w_{n_i}$, then using sentence (\ref{equation:SentenceIrreducibleFinMult})  the projection of each $w_j$ in the orthogonal complement of $K_i$ is arbitrary small and thus it should be zero.

It also follows from the proof of Theorem \ref{GFinitioAocategoricidad} that $\dim(H_i)=\dim(K_i)=\Ao$ for all $1\leq i\leq m$, so we get $\dim(H_i)=\dim(K_i)$ for all $1\leq i\leq k$ and thus the structures are isomorphic.
\end{proof}

\begin{rem}\label{projectagree}
Let $(K,\rho)$ be a model of $IHS_\pi$. Let $C\subseteq K$ be a closed subspace. We denote by $\Pr_C$ the orthogonal projection from $H$ onto $C$. Let $\overline{a}$ and $\overline{b}$ be tuples of $K^n$, if for each $1 \leq i \leq k$ we write $P_i$ for the projection on the $i$-th component of $K$, then  
\[
\qftp_\pi(\overline{a}/C)=\qftp_\pi(\overline{b}/C) \text{ implies } \text{Pr}_C(P_i(a_j))=\text{Pr}_C(P_i(b_j)).
\]
\end{rem}

\begin{proof}
    By Proposition \ref{ProyeccionesPierreSonDefinibles} each $P_i$ is quantifier free definable, so for each $1 \leq i \leq k$ and each $1\leq j\leq n$ we have $\qftp_\pi(\overline{a}/C)=\qftp_\pi(\overline{b}/C)$ implies $\qftp(P_i(a_j)/C)=\qftp(P_i(b_j)/C)$. It follows from basic results on $IHS$ (see for example \cite[Lemma 15.1]{AlexMT}) that $\qftp(P_i(a_j)/C)=\qftp(P_i(b_j)/C)$ implies $\text{Pr}_C(P_i(a_j))=\text{Pr}_C(P_i(b_j))$.
\end{proof}

\begin{prop}\label{cuantifiereliminationGfinite}
Let $\pi$ be a unitary representation of a finite group $G$ on a infinite dimensional Hilbert space $K$. Then the theory $IHS_\pi$ has quantifier elimination.

\end{prop}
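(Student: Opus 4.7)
The plan is to leverage the $\aleph_0$-categoricity and the decomposition into isotypical components to run a direct back-and-forth/amalgamation argument at the level of $G$-invariant finite dimensional subspaces. Recall that in continuous logic, to show $IHS_\pi$ has quantifier elimination, it suffices to show that in a sufficiently saturated model $(K,\rho)\models IHS_\pi$, any two $n$-tuples $\overline{a}, \overline{b}\in K^n$ with $\qftp_\pi(\overline{a})=\qftp_\pi(\overline{b})$ satisfy $\tp_\pi(\overline{a})=\tp_\pi(\overline{b})$. I would obtain this by producing an automorphism of $(K,\rho)$ sending $\overline{a}$ to $\overline{b}$, which works because by Corollary \ref{cor:wCategoricityGFinite} every separable model has such automorphisms available.

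First I would use Proposition \ref{ProyeccionesPierreSonDefinibles} to observe that each projection $P_i$ onto the $i$-th isotypical component is a quantifier-free $\mathcal{L}_\pi$-term (an explicit $\mathbb{C}$-linear combination of the $\rho(g)$'s). Hence the quantifier-free type determines all inner products of the form $\langle \rho(g)P_i(a_j),\rho(h)P_i(a_l)\rangle=\langle \rho(h^{-1}g)P_i(a_j),P_i(a_l)\rangle$. For each $1\leq i\leq k$, let
\[
V_i(\overline{a}):=\operatorname{span}\{\rho(g)P_i(a_j):g\in G,\ 1\leq j\leq n\},
\]
and similarly $V_i(\overline{b})$. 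These are finite-dimensional $G$-invariant subspaces of the $i$-th component $K_i$, and the agreement of the relevant Gram matrices forces the existence of a well-defined $G$-equivariant linear isometry $\tau_i:V_i(\overline{a})\to V_i(\overline{b})$ with $\tau_i(\rho(g)P_i(a_j))=\rho(g)P_i(b_j)$; in particular $\tau_i(P_i(a_j))=P_i(b_j)$.

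Next I would extend each $\tau_i$ to a $G$-equivariant unitary automorphism $\sigma_i$ of the whole component $K_i$. By Facts \ref{irreducibleDecompositionRepresentation} and \ref{irreducibleDecomp-multiplicity}, every finite dimensional $G$-invariant subspace of $K_i$ is isomorphic to a direct sum of copies of $W_i$, so $V_i(\overline{a})\cong W_i^{r_i}$ where $r_i=\dim V_i(\overline{a})/n_i$, and the same $r_i$ works for $\overline{b}$ since $\dim V_i(\overline{a})=\dim V_i(\overline{b})$ is read off from the quantifier-free type. For the infinite components ($1\leq i\leq m$), the orthogonal complements $K_i\ominus V_i(\overline{a})$ and $K_i\ominus V_i(\overline{b})$ each contain $\aleph_0$ copies of $W_i$ (since removing finitely many leaves $\aleph_0$), hence are $G$-isomorphic; for the finite components ($m+1\leq i\leq k$), both complements contain exactly $d_i-r_i$ copies of $W_i$, hence are again $G$-isomorphic. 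Extending $\tau_i$ by any such $G$-equivariant unitary on the complement gives the desired $\sigma_i$.

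Finally I would assemble $\sigma:=\bigoplus_i\sigma_i$, which is a unitary automorphism of $K$ commuting with every $\rho(g)$ and satisfying $\sigma(P_i(a_j))=P_i(b_j)$ for every $i,j$. Summing over $i$ yields $\sigma(a_j)=b_j$, concluding the argument. I do not expect any single step to be a serious obstacle; the slightly delicate point is verifying that the $G$-equivariant unitary $\tau_i$ really is well-defined from the quantifier-free type (i.e.~that $G$-linear relations among the $\rho(g)P_i(a_j)$ are captured by the quantifier-free type via $P_i$ being a quantifier-free term), and that the multiplicity count $r_i$ matches for $\overline{a}$ and $\overline{b}$; both follow cleanly once one notes that $\dim V_i(\overline{a})$ is determined by the Gram matrix of the tuple $(\rho(g)P_i(a_j))_{g,j}$, which is encoded in $\qftp_\pi(\overline{a})$.
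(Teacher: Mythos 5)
Your proposal is correct and follows essentially the same route as the paper: use the definable projections $P_i$ to read the Gram data of the $G$-orbits off the quantifier-free type, build a $G$-equivariant isometry between the spans $\gen{A_{i_0}}$ and $\gen{B_{i_0}}$, and extend to an automorphism of $K$ by matching the multiplicities on the orthogonal complements. The only cosmetic difference is that you count multiplicities in the complements component by component, where the paper simply invokes Corollary \ref{cor:wCategoricityGFinite} to identify $A^{\perp}$ with $B^{\perp}$.
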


\begin{proof}
Let $(K,\pi)$ be a separable model of $IHS_\pi$. We will prove that if $\overline{a}$ and $\overline{b}$ are arbitrary $n$-tuples of $K$ such that $\qftp_\pi(\overline{a})=\qftp_\pi(\overline{b})$ then $\tp_\pi(\overline{a}) = \tp_\pi(\overline{b})$. Fix $1 \leq i_0 \leq k$ and define $K_{i_0} := P_{i_0}(K)$. Additionally, let us assume without loss of generality that $\overline{a}$ and $\overline{b}$ are non-trivial arbitrary $n$-tuples of $K_{i_0}$ and define 
\[
A_{i_0} := \bigcup_{g \in G} \Parl{\pi(g)a_1, \dots, \pi(g) a_{n}} \text{ and } B_{i_0} := \bigcup_{g \in G} \Parl{\pi(g)b_1, \dots, \pi(g) b_{n}}.
\]
Thus, by assumption $\gen{A_{i_0}}$ and $\gen{B_{i_0}}$ (the span of the corresponding sets) are non-trivial subspaces of $K_{i_0}$.

\begin{claim}

There is an $\mathcal{L}_\pi$-isomorphism sending $\gen{A_{i_0}}$ to $\gen{B_{i_0}}$. 

\end{claim}
\begin{proof}

Since $\gen{A_{i_0}}$ and $\gen{B_{i_0}}$ are subspaces of $K_{i_0}$ closed under the action of $\pi$, by Fact \ref{fact:projectionsPierre}(1) we can decompose these finite dimensional vector spaces as the direct sum of copies of the irreducible representation $W_{i_0}$. Notice that $\qftp_\pi(\overline{a})=\qftp_\pi(\overline{b})$ implies
\begin{equation}\label{eq:qe} 
\gen{\pi(h)a_i,\pi(g)a_j} = \gen{\pi(h)b_i,\pi(g)b_j} \text{ for all } 1\leq i,j\leq n \text{ and } g, h  \in G,
\end{equation}
and recall that the linear independence of one vector from others is a quantifier free sentence expressible in the language of Hilbert spaces. Then, these inner product equations imply that if we take a subset of $A_{i_0}$ which forms a bases for $\langle A_{i_0} \rangle$ then the corresponding subset in $B_{i_0}$ forms a bases of $\gen{B_{i_0}}$, hence dim$\gen{A_{i_0}}=$ dim$\gen{B_{i_0}}$. Then we can write
\[
\gen{A_{i_0}} \cong U^{A}_1 \oplus \ldots \oplus U^{A}_\ell \text{ and } \gen{B_{i_0}} \cong U^{B}_1 \oplus \ldots \oplus U^{B}_\ell,
\]
for some $1 \leq \ell$, where each term of the sum is a copy of $W_{i_0}$ where the sum for
$\gen{B_{i_0}}$ comes from the map
sending $A_{i_0}$ to $B_{i_0}$. In fact, for all $1 \leq j \leq n$ and all $1 \leq \ell' \leq \ell$ we have that 
\[
\norm{\text{Pr}_{U^{A}_{\ell'}}a_j} = \norm{\text{Pr}_{U^{B}_{\ell'}}b_j}
\]
is implied by $\qftp_\pi(\overline{a})=\qftp_\pi(\overline{b})$ and the way we constructed the sum. Now, take any of the copies of $W_{i_0}$ in $\gen{A_{i_0}}$, namely $U^{A}_{\ell'}$. As above, if we choose a basis of $U^{A}_{\ell'}$ from $\gen{A_{i_0}}$ its corresponding subset of $\gen{B_{i_0}}$ give us a basis for $U^{B}_{\ell'}$ and the action by the maps $(\pi(g):g\in G)$ is compatible with this correspondence. 


Then the bijection $\gen{A_{i_0}} \longrightarrow \gen{B_{i_0}}$ induced by the bijection 
$A_{i_0} \longrightarrow B_{i_0}$ is an $\mathcal{L}_\pi$-isomorphism between $\gen{A_{i_0}}$ and $\gen{B_{i_0}}$.
\end{proof}

Notice that, if we start with $\overline{a}$ and $\overline{b}$ as general tuples of $K$, we can apply, for each $i\leq k$, the above construction to obtain a map $f_i:\gen{P_i(\overline a)}\to \gen{P_i(\overline b)}$ that respects the action of $\pi$. Putting the $f_i$ together into a single function $f$ one constructs a $\mathcal{L}_\pi$-isomorphism $f$ between the closed subspaces 
\[
A := \gen{\cup_{g \in G} \Parl{\pi(g)a_1, \dots, \pi(g) a_{n}}} \text{ and } B := \gen{\cup_{g \in G} \Parl{\pi(g)b_1, \dots, \pi(g) b_{n}}}.
\]
sending $\overline{a}$ to $\overline{b}$. Now we extend $f$ to an $\mathcal{L}_{\pi}$ isomorphism $\phi$ over $K$. Observe that we can write $K \cong A \oplus A^{\perp}$ and $K \cong B \oplus B^{\perp}$. Moreover, both $A^{\perp}$ and $B^{\perp}$ are models of $T_{\pi'}$, where $\pi'$ is the restriction of $\pi$ to $A^{\perp}$, by Corollary \ref{cor:wCategoricityGFinite} we have $A^{\perp}\cong B^{\perp}$ as $\mathcal{L}_{\pi'}$-structures. Thus, there is a $\mathcal{L}_{\pi}$-isomorphism $\phi \colon K \longrightarrow K$ sending $\overline{a}$ to $\overline{b}$.
\end{proof}

\begin{cor}\label{existclosed}\
Consider the representation formed by the sum of countable copies of the left regular representation, denoted by $\infty \lambda_G$, which acts on the separable Hilbert space $\infty \ell_2(G)$. Then, for any unitary representation $\pi$, we can embed a separable model $(H,\pi)$ of $IHS_\pi$ into $(\infty \ell_2(G),\infty \lambda_G)$ and the structure $(\infty \ell_2(G),\infty \lambda_G)$ is existentially closed.

\end{cor}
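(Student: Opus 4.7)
The plan is to handle the two assertions separately. For the embedding, I would first apply Fact \ref{irreducibleDecompositionRepresentation} to decompose $H \cong \bigoplus_{i=1}^{k} m_i W_i$, where $W_1,\ldots,W_k$ are the finitely many irreducible unitary representations of $G$ provided by Fact \ref{decompositionRegularRepresentation}, and each multiplicity $m_i\in\{0,1,\ldots,\aleph_0\}$ is bounded by $\aleph_0$ since $H$ is separable. By Fact \ref{decompositionRegularRepresentation}, the left regular representation $\ell_2(G)$ contains every $W_i$ with multiplicity $\dim(W_i)\geq 1$, hence $\infty\ell_2(G)$ contains $\aleph_0$ pairwise orthogonal copies of each $W_i$. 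Taking $m_i$ of those copies and summing the canonical $G$-equivariant isometries $W_i\hookrightarrow W_i$ yields the desired $G$-equivariant isometric embedding $(H,\pi)\hookrightarrow(\infty\ell_2(G),\infty\lambda_G)$.

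For the existential closure, I would show directly that any existential formula realized in an extension is already realized in $(\infty\ell_2(G),\infty\lambda_G)$. Let $(K,\rho)\supseteq(\infty\ell_2(G),\infty\lambda_G)$ be any expansion of a Hilbert space by a unitary representation of $G$, let $\psi(\overline{x},\overline{y})$ be quantifier-free, and pick $\overline{a}\in\infty\ell_2(G)$ together with a witness $\overline{b}\in K$. It is enough to produce $\overline{b}'\in\infty\ell_2(G)$ with $\qftp_\pi(\overline{a},\overline{b}')=\qftp_\pi(\overline{a},\overline{b})$, because then $\psi(\overline{a},\overline{b}')=\psi(\overline{a},\overline{b})$. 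Let $A\subseteq \infty\ell_2(G)$ be the $G$-invariant subspace spanned by $\{\rho(g)a_i\}$ and let $V\subseteq K$ be the $G$-invariant subspace spanned by $\{\rho(g)c : g\in G,\ c\in\overline{a}\overline{b}\}$. Both are finite dimensional with $A\subseteq V$, and by Fact \ref{irreducibleDecompositionRepresentation} applied to $V$ there is a $G$-invariant complement $U$ with $V=A\oplus U$, where $U$ is a finite direct sum of copies among the $W_i$'s.

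To finish, I observe that since $\infty\lambda_G$ acts unitarily on $\infty\ell_2(G)$, the orthogonal complement $A^{\perp}$ of $A$ is itself $G$-invariant, and the isotypic decomposition of $\infty\ell_2(G)$ restricts compatibly to $A\oplus A^\perp$. Because $A$ is finite dimensional, $A^\perp$ still contains $\aleph_0$ copies of every $W_i$, so the finite-dimensional representation $U$ admits a $G$-equivariant isometric embedding into $A^\perp$. Extending by the identity on $A$ produces the desired $G$-equivariant isometric embedding $V\hookrightarrow\infty\ell_2(G)$, sending $\overline{b}$ to a witness $\overline{b}'$ with the required quantifier-free type. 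The only mildly delicate point is the bookkeeping step verifying that $A^{\perp}$ inherits the full isotypic richness of $\infty\ell_2(G)$; everything else is a direct consequence of the Maschke-type decompositions developed earlier in the section.
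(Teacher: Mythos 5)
Your embedding argument is the same as the paper's: both rest on Fact \ref{decompositionRegularRepresentation} plus the observation that separability caps every multiplicity at $\aleph_0$, so each isotypic component of $(H,\pi)$ fits inside the corresponding $\aleph_0$-fold component of $(\infty\ell_2(G),\infty\lambda_G)$. For existential closure, however, you take a genuinely different route. The paper argues abstractly: given a separable superstructure $(H,\pi)\supseteq(\infty\ell_2(G),\infty\lambda_G)$, it embeds $(H,\pi)$ into a further copy $(\infty\ell_2(G)',\infty\lambda_G')$ and invokes the quantifier elimination already established in Proposition \ref{cuantifiereliminationGfinite} to get $(\infty\ell_2(G),\infty\lambda_G)\preccurlyeq(\infty\ell_2(G)',\infty\lambda_G')$, so witnesses transfer down. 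You instead run a direct amalgamation-style argument on finite-dimensional invariant subspaces: decompose the invariant span $V$ of the parameters and the witness as $A\oplus U$ and re-embed $U$ equivariantly into $A^\perp$ inside $\infty\ell_2(G)$, producing an exact witness of the same quantifier-free type. Your version is more self-contained (it does not lean on Proposition \ref{cuantifiereliminationGfinite}, and in effect re-proves the relevant instance of quantifier elimination by hand) and yields exact rather than approximate witnesses; the paper's version is shorter given the machinery already in place and makes visible that existential closure is really a corollary of quantifier elimination plus co-finality of the regular representation.

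One point to tighten: you obtain the complement $U$ from Fact \ref{irreducibleDecompositionRepresentation}, which only gives \emph{some} $G$-invariant complement of $A$ in $V$. For your map ``identity on $A$, equivariant isometry $U\hookrightarrow A^\perp$'' to preserve the quantifier-free type of $(\overline{a},\overline{b})$, it must be an isometry of all of $V$, and that requires $U$ to be the \emph{orthogonal} complement $V\cap A^\perp$ of $A$ in $V$; an oblique invariant complement would have nonzero inner products against $A$ that your map destroys. The fix is immediate and is the same observation you already use for $A^\perp$ in $\infty\ell_2(G)$: since $\rho$ is unitary and $A$ is invariant, $V\cap A^\perp$ is invariant, so take $U$ to be that subspace. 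With this adjustment the argument is complete.
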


\begin{proof}
Let $(H,\pi)$ be as in the hypothesis. By Fact \ref{decompositionRegularRepresentation} all irreducible representations of $G$ appear in $(\infty \ell_2(G),\infty \lambda_G)$ with infinite  multiplicity. In particular, each of the irreducible representations used in $(H,\pi)$ (with finite or infinite multiplicity) appears in $(\infty \ell_2(G),\infty \lambda_G)$ and thus we can embed the structure $(H,\pi)$ into $(\infty \ell_2(G),\infty \lambda_G)$. \\

Now we show the structure $(\infty \ell_2(G),\infty \lambda_G)$ is existentially closed. Assume that $(H,\pi)\geq (\infty \ell_2(G),\infty \lambda_G)$ is a separable superstructure. As we mentioned above, we can find a copy $(\infty \ell_2(G)',\infty \lambda_G')$ extending $(H,\pi)$ and thus also extending $(\infty \ell_2(G),\infty \lambda_G)$. By quantifier elimination,  $(\infty \ell_2(G),\infty \lambda_G) \preccurlyeq (\infty \ell_2(G)',\infty \lambda'_G)$ and thus all existential witnesses in $(H,\pi)$ have an approximate witness inside $\infty \ell_2(G)$.
\end{proof}

This gives another proof that $Th(\infty \ell_2(G),\infty \lambda_G)$ is the model companion of the theory of representations of $G$ (compare with Theorem 2.8 in \cite{AlexHGGA}).

\begin{thm}\label{stabilityGfinite}\
Let $\pi$ be a unitary representation of a finite group $G$ in a infinite dimensional Hilbert space. Then the theory $IHS_\pi$ is $\Ao$-stable.
\end{thm}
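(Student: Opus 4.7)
The plan is to combine quantifier elimination (Proposition \ref{cuantifiereliminationGfinite}) with the finiteness of $G$ to exhibit the space of $1$-types over any separable parameter set as a separable metric space. Let $(K,\pi)$ be a sufficiently saturated model of $IHS_\pi$ and let $A\subseteq K$ be separable. Set
\[
C := \overline{\gen{\pi(g)c : g\in G,\ c\in A}},
\]
a separable, closed, $G$-invariant subspace of $K$, so that $C^{\perp}$ is also $G$-invariant. For $a\in K$ write $a=a_C+a_{C^{\perp}}$.

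By quantifier elimination, $\tp_\pi(a/A)$ is determined by the quantifier-free data over $A$, which consists of the inner products $\langle \pi(g)a,c\rangle$ for $c\in A$, $g\in G$, together with the internal matrix $M_a:=(\langle \pi(g)a,\pi(h)a\rangle)_{g,h\in G}\in \mathbb{C}^{|G|^{2}}$. The first family is encoded, via a $G$-equivariance argument and Remark \ref{projectagree}, by the single projection $a_C\in C$; and the finite matrix $M_a$ encodes the rest. Hence the assignment $\tp_\pi(a/A)\mapsto (a_C,M_a)$ defines an injection
\[
\phi\colon S_1(A)\longrightarrow C\times \mathbb{C}^{|G|^{2}},
\]
whose codomain is separable.

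To promote this into $\aleph_0$-stability I must further show that the $d$-metric on $S_1(A)$ is dominated by the pullback of the natural metric on $C\times \mathbb{C}^{|G|^{2}}$ under $\phi$. Given types $p,q\in S_1(A)$ with $\phi(p)$ close to $\phi(q)$, and a fixed realization $b\models q$, I aim to produce a realization $a\models p$ with $\|a-b\|$ small. The $C$-parts $a_C$ and $b_C$ are automatically close, so the task reduces to exhibiting $a_{C^{\perp}}\in C^{\perp}$ with the prescribed internal matrix $M_a-M_{a_C}$ and with $\|a_{C^{\perp}}-b_{C^{\perp}}\|$ small. The construction proceeds by decomposing $C^{\perp}$ into its isotypic components via Fact \ref{fact:projectionsPierre}, applying the near-equality of the matrices componentwise, and piecing together a $G$-equivariant near-isometry between the cyclic subrepresentations generated by $a_{C^{\perp}}$ and $b_{C^{\perp}}$; this near-isometry is then extended to a $G$-equivariant unitary of $C^{\perp}$ using Theorem \ref{GFinitioAocategoricidad} and Corollary \ref{cor:wCategoricityGFinite}, which guarantee that the residual isotypic multiplicities in $C^{\perp}$ (infinite or finite as prescribed by $IHS_\pi$) leave enough room for such an extension.

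The main obstacle I expect is precisely this last step: carefully matching isotypic components so that a near-equality of finite internal matrices yields an automorphism of $(K,\pi)$ fixing $A$ with small operator-norm displacement, with explicit continuity bounds independent of the representations. Once that is established, the image of $\phi$ is a separable subset of $C\times \mathbb{C}^{|G|^{2}}$ and the pullback metric dominates $d$, so $(S_1(A),d)$ has countable density character, proving that $IHS_\pi$ is $\aleph_0$-stable.
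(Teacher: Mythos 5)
Your reduction via quantifier elimination is sound as far as it goes: since $C$ is $G$-invariant, $C^{\perp}$ is too, so every inner product $\langle \pi(g)a,c\rangle$ with $c\in C$ equals $\langle \pi(g)a_C,c\rangle$, and $\langle\pi(g)a,\pi(h)a\rangle$ splits as the sum of the $C$-part and the $C^\perp$-part; hence by Proposition \ref{cuantifiereliminationGfinite} the map $\phi$ is indeed injective into the separable space $C\times\mathbb{C}^{|G|^2}$. The problem is that injectivity into a separable metric space says nothing about the density character of $(S_1(A),d)$ --- one can inject a metric space of uncountable density character into $\mathbb{R}$ --- so the entire weight of the theorem rests on the step you explicitly defer: continuity of $\phi^{-1}$ with respect to the $d$-metric. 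That step is the actual mathematical content of the proof under your strategy, and it is missing. Concretely, you would need a perturbation lemma of the following kind: if two vectors $v,w$ of an isotypic component $W_i^{\oplus\infty}$ have nearby positive-definite functions $g\mapsto\langle\pi(g)v,v\rangle$, then some $G$-equivariant unitary carries $v$ close to $w$. This is true (it reduces, via Burnside's theorem, to injectivity of $T\mapsto(\operatorname{Tr}(\pi(g)T))_g$ on positive operators on $W_i$ together with continuity of purification of density operators), but it requires a genuine finite-dimensional argument with explicit bounds, including the degenerate cases where the cyclic subrepresentation changes dimension. As written, your argument establishes only that $S_1(A)$ injects into a separable space, which is strictly weaker than $\aleph_0$-stability.

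For comparison, the paper's proof avoids any quantitative continuity. It fixes a countable dense $E$ with $\overline{E}=K$ a separable model, observes that for any $v$ in an elementary extension the component of $v$ orthogonal to $K$ generates, inside each infinite isotypic component, a cyclic (hence finite-dimensional) subrepresentation, and therefore realizes $\tp_\pi(v/\overline{E})$ inside the fixed separable extension $\hat{K}=K\oplus W_1^0\oplus\cdots\oplus W_m^0$. Since $v\mapsto\tp_\pi(v/\overline{E})$ is $1$-Lipschitz and now surjective from a separable space, the countable density character of $(S_1(E),d)$ follows at once. If you want to complete your route you must prove the perturbation lemma above; otherwise I would recommend replacing the continuity step by a realization argument of this kind.
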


\begin{proof}


Let $\parc{K, \rho} $ be a separable model of $IHS_\pi$, by Corollary \ref{cor:wCategoricityGFinite} we can decompose $K = K^{\text{inf}}\oplus K^{\text{fin}}$ as in Definition \ref{def:unitaryRepresentationGFinite}, then
\[
K \cong (K_1 \oplus\dots \oplus K_m) \oplus (V_{m+1} \oplus\dots \oplus V_k).
\]
To show that the theory $IHS_\pi$ is $\Ao$-stable, it is sufficent to take $E \subseteq K$ countable such that $\overline{E} =K$ and prove that the density character $\|\parc{S_1(E), d}\| \leq \aleph_0$. Let us consider $(\hat{K}, \hat{\rho}) \succcurlyeq \parc{K, \rho}$ the superstructure defined as
\[
\hat{K} :=  K \bigoplus W_{1}^0 \oplus\dots \oplus W_{m}^0,
\]
where $W_{1}^0 \ldots W_{m}^0$ are copies of the irreducible representations $W_{1}, \ldots, W_{m}$ in $K^{\text{inf}}$ respectively and $\hat{\rho}$ is the direct sum of the homomorphism from $G$ to $W_{1}^0 \oplus\dots \oplus W_{m}^0$ with $\rho$.\\ 

Let $\parc{F, \tau} \succcurlyeq \parc{K, \rho}$ be an arbitrary separable elemental superstructure. By $\Ao$-categoricity of the theory $IHS_\pi$, we can write $F \cong F^{\text{inf}} \oplus F^{\text{fin}}$ and notice that $K^{\text{fin}} = F^{\text{fin}}$, then
\[
F \cong F_1 \oplus\dots \oplus F_m \bigoplus F^{\text{fin}}
\] 
and each of the spaces $F_i$ with $1\leq i\leq m$ has dimensions $\Ao$. Now, for each $1 \leq i \leq k$ let us define $\text{Pr}_{\overline{E}}^{i}:F\longrightarrow F_i\cap \overline{E}$, as the projection of $F$ onto $F_i\cap \overline{E}$ defined by $\text{Pr}_{\overline{E}}^{i} = $ Pr$_{\overline{E}}P_i$, where Pr$_{\overline{E}}$ is the orthogonal projection onto the closed subspace $\overline{E}$, and $P_i : F \longrightarrow F_i$ is the projection on the $i$-th component of $F$ (defined as in Proposition \ref{ProyeccionesPierreSonDefinibles}). Now, take $v \in F$, by Proposition \ref{cuantifiereliminationGfinite} the theory $IHS_\pi$ has quantifier elimination and the type $\tp_\pi(v/\overline{E})$ is determined by the elements $\text{Pr}_{\overline{E}}(v)$ and the types $\Parl{ \tp_\pi(P_iv - \text{Pr}_{\overline{E}}^{i}v) : 1 \leq i \leq m}$, i.e. the types of elements orthogonal to $\overline{E}$ that lie in the different components of $F^{\text{inf}}$. Since $\overline{E} = K$ and $K^{\text{fin}} = F^{\text{fin}}$, and for all $m+1 \leq i \leq k$ we have $\text{Pr}_{\overline{E}}^{i}v \in K_i$. Thus we only need to realize in $(\hat{K},\hat{\rho})$ the types $\Parl{\tp_\pi(P_iv - \text{Pr}_{\overline{E}}^{i}v) : 1 \leq i \leq m}$ in a space orthogonal to $\overline{E}$.\\

Fix an index $1 \leq i \leq m$ and set $w_i := P_{i}v-\text{Pr}_{\overline{E}}^{i}v$. If $w_i = 0$ there is nothing to prove. Otherwise $\gen{\cup_{g \in G}\parl{\tau(g)w_i}}$ is a copy of $W_i$. Then there exists an isomorphism
\[
f_i \colon \gen{\cup_{g \in G}\parl{\tau(g)w_i}} \longrightarrow W^0_{i}\subseteq \hat{K},
\]
respecting the action of the operators induced by $\tau$ and $\hat{\rho}$. Repeating the argument for all $1\leq i \leq m$, we define
\[
\hat{v} := (\text{Pr}_{\overline{E}}(v) + (f_1(w_{1}) +  \dots +  f_m(w_m)) \in \hat{K}.
\]
Then we have $\tp_\pi(\hat{v}/\overline{E}) = \tp_\pi(v/\overline{E})$ and the new realization of the type belongs to $(\hat{K}, \hat{\rho})$ which is separable, hence $\norm{(S_1(E),d)} \leq \Ao$.
\end{proof}

Now we characterize algebraic closure in models of $IHS_\pi$ and we will also give a natural description of non-forking.
We work in $(H, \pi)$ a $\kappa$-saturated and $\kappa$-strongly homogeneous model of $IHS_\pi$ for some uncountable inaccessible cardinal $\kappa$. We say that $A \subset H$ is \textit{small} if $|A| < \kappa$, and if $C\subseteq H$ is a closed subspace, we denote by $\text{Pr}_C$ the orthogonal projection of $H$ onto~$C$. Additionally, if $A \subseteq H$ we write acl$(A)$ and dcl$(A)$ for the algebraic and definable closures of $A$ in the language $\mathcal{L}_\pi$, respectively, and cl$(A)$ for the topological closure.

\begin{prop}

Let $A \subset H$ be small. Then $$\acl(A)=\cl(\gen{\{\pi(g)(a):a\in A,g\in G\} \cup H^{\text{fin}}})$$

\end{prop}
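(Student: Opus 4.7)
The plan is to verify both inclusions separately, using quantifier elimination (Proposition~\ref{cuantifiereliminationGfinite}) and the definability of each isotypic projection $P_i$ (Proposition~\ref{ProyeccionesPierreSonDefinibles}) as the main tools. Write $C$ for the claimed right-hand side and set $D := \overline{\gen{\pi(g)(a) : g \in G,\, a \in A}}$, a closed $\pi$-invariant subspace of $H$; since $H^{\text{fin}}$ is finite-dimensional, $C = D + H^{\text{fin}}$ is already closed.

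The inclusion $C \subseteq \acl(A)$ is the easy direction. Each $\pi(g)(a) \in \dcl(A) \subseteq \acl(A)$ since $\pi(g)$ is an $\mathcal{L}_\pi$-definable function. For $v \in H^{\text{fin}}$, the orbit of $v$ under $\mathcal{L}_\pi$-automorphisms is forced into $H^{\text{fin}}$ by definability of the $P_i$'s, is norm-bounded, and sits inside a finite-dimensional space, so its closure is compact and $v \in \acl(\emptyset)$. Since $\mathcal{L}_\pi$ includes $0, -, \dot 2, \frac{x+y}{2}$, a dense family of complex scalars, and each $\pi(g)$, the set $\acl(A)$ is a closed $\pi$-invariant $\mathbb{C}$-linear subspace of $H$, so it contains $C$.

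For $\acl(A) \subseteq C$, assume $b \notin C$; the strategy is to produce infinitely many realizations of $\tp_\pi(b/A)$ at pairwise positive distance. Since each $P_i$ is a complex linear combination of the $\pi(g)$ (Fact~\ref{fact:projectionsPierre}), $D$ is closed under $P_i$, so $D = \bigoplus_{i=1}^k (D \cap V_i)$; consequently $C = (D \cap H^{\text{inf}}) \oplus H^{\text{fin}}$ and $C^\perp = \bigoplus_{i \leq m}(H_i \ominus (D \cap H_i))$. Writing $b - \text{Pr}_C(b) = \sum_{i \leq m} v_i$ with $v_i \in H_i \ominus (D \cap H_i)$, some $v_{i_0}$ is nonzero. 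By $\kappa$-saturation $H_{i_0}$ contains at least $\kappa$ orthogonal copies of $W_{i_0}$ while $D \cap H_{i_0}$ has density character $\leq |A| < \kappa$, so $H_{i_0} \ominus (D \cap H_{i_0})$ admits infinitely many orthogonal copies of the finite-dimensional subrepresentation generated by $v_{i_0}$; pick vectors $v_{i_0}^{(n)}$ in the $n$-th copy corresponding to $v_{i_0}$ under the natural $G$-equivariant isomorphism (with $v_{i_0}^{(1)} = v_{i_0}$) and set $b_n := b + v_{i_0}^{(n)} - v_{i_0}$.

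By quantifier elimination, $\tp_\pi(b_n/A) = \tp_\pi(b/A)$ reduces to matching $\langle \pi(g)(b_n), a \rangle$ for $a \in A, g \in G$ and $\langle \pi(g)(b_n), \pi(h)(b_n) \rangle$ for $g, h \in G$. The first batch is unchanged because $b_n - b \in D^\perp$; the second follows from a direct expansion, using that the cross products $\langle \pi(g)(v_{i_0}^{(n)}), \pi(h)(v_{i_0}) \rangle$ vanish for $n > 1$ by orthogonality of the subrepresentations, and the remaining terms agree because $v_{i_0}^{(n)}$ and $v_{i_0}$ have identical character functions. As $\|b_n - b_m\| = \sqrt{2}\|v_{i_0}\| > 0$ for $n \neq m$, the realization set of $\tp_\pi(b/A)$ is not totally bounded, hence $b \notin \acl(A)$. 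The main technical point is the last inner-product computation; locating enough orthogonal copies of the subrepresentation generated by $v_{i_0}$ is a routine saturation argument.
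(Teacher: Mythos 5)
Your proof is correct and follows essentially the same route as the paper's: the easy inclusion via definability of the $\pi(g)$'s and compactness of the unit ball of the definable finite-dimensional components, and the hard inclusion by extracting a nonzero piece of $b-\mathrm{Pr}_C(b)$ in some infinite isotypic component and replacing it with infinitely many orthogonal equivariant copies to get non-totally-bounded realization sets of $\tp_\pi(b/A)$. Your inner-product verification is just a more explicit version of the paper's assertion that the shifted vectors realize the same type (though ``identical character functions'' should really say that the matrix coefficients $g\mapsto\gen{\pi(g)v_{i_0}^{(n)},v_{i_0}^{(n)}}$ agree, since the copies are taken via unitary $G$-equivariant isomorphisms).
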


\begin{proof}
"$\supseteq$" Notice that, for each $m+1 \leq i \leq k$ the component $V_i$ of $H$ is finite-dimensional, and by Proposition \ref{ProyeccionesPierreSonDefinibles} the projection $P_i : H \longrightarrow V_i$ is definable in the theory $IHS_\pi$. Then, the unitary ball of $V_i$ is definable over $\emptyset$ and compact and thus  algebraic over $\emptyset$. Hence $H^{\text{fin}} \subseteq \acl(A)$. Clearly we also have $\{\pi(g)(a):a\in A\}\subseteq \acl(A)$ and thus the containment follows.

"$\subseteq$" Now, set $E := \text{cl}(\gen{\{\pi(a):a\in A, g\in G\} \cup H^{\text{fin}}})$ and suppose $v \not\in E$, then $\Norm{\text{Pr}_{E^\perp}(v)}>0$. By hypothesis $E^\perp \subset H^{\text{inf}}$, so there exists $1\leq j \leq m$ such that $\Norm{P_{j}(\text{Pr}_{E^\perp}v)}>0$. Since $A$ is a small subset of $H$, so is $E$. Since $H_j$ is large, the subspace $H_j \cap E^\perp$ has infinite dimension. Thus, we can find a sequence $\Parl{v^j_t}_{t=1}^\infty \subset H_j \cap E^\perp$ of orthogonal vectors with norm $\Norm{P_{j}(\text{Pr}_{E^\perp}v)}$, such that 
\[
\tp(v^j_t +\sum_{i=1, i\not=j}^m P_i(\text{Pr}_{E^\perp}v) +\text{Pr}_Ev/A) = \tp(v/A) \text{ for all } t \geq 1.
\]
The elements of the sequence $\Parl{v^j_t +\sum_{i=1, i\not=j}^m P_i(\text{Pr}_{E^\perp}v) +\text{Pr}_Ev}_{t=1}^\infty$ are at the same positive distance one from the other
and thus $v\not \in \acl(A)$
\end{proof}

\begin{obse}
It follows from the previous proof that for $A \subset H$ be small we have $\dcl(A)=\cl(\gen{\{\pi(g)(a):a\in A,g\in G\}})$.    
\end{obse}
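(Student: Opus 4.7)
The inclusion ``$\supseteq$'' is immediate because each $\pi(g)$ is a $\emptyset$-definable unary function and $\dcl(A)$ is closed under the Hilbert space operations of the language (including the scalars $e^{i\theta}$, $\theta\in 2\pi\mathbb{Q}$) and under the metric closure. For the inclusion ``$\subseteq$'', my plan is to start from the previous proposition, which gives $\dcl(A)\subseteq\acl(A)=\cl(\gen{\{\pi(g)(a):a\in A,g\in G\}\cup H^{\text{fin}}})$. Writing $E:=\cl(\gen{\{\pi(g)(a):a\in A,g\in G\}})$ and using that $H^{\text{fin}}$ is finite dimensional, so that $E+H^{\text{fin}}$ is already closed, I need to show $\dcl(A)\cap(E+H^{\text{fin}})\subseteq E$.

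The natural first step is to reduce to the finite part. Since every $P_i$ is $\emptyset$-definable by Proposition~\ref{ProyeccionesPierreSonDefinibles} and $E$ is invariant under each $P_i$, one can split $E=E^{\text{inf}}\oplus E^{\text{fin}}$ along the orthogonal decomposition $H=H^{\text{inf}}\oplus H^{\text{fin}}$; moreover, for any $v\in\acl(A)\subseteq E+H^{\text{fin}}$ the infinite part $P_{\text{inf}}(v)$ already lies in $E^{\text{inf}}\subseteq E\subseteq\dcl(A)$. Hence the problem reduces to showing that if $v\in H^{\text{fin}}$ and $v\in\dcl(A)$, then $v\in E^{\text{fin}}$.

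Suppose toward a contradiction that $u:=v-\text{Pr}_{E^{\text{fin}}}(v)$ is nonzero. The subspace $(E^{\text{fin}})^\perp\cap H^{\text{fin}}$ is finite dimensional and $\pi$-invariant, so by Fact~\ref{irreducibleDecompositionRepresentation} it decomposes as an orthogonal direct sum of irreducible $\pi$-subrepresentations; I would pick one such summand $W$ in which the component $u_W$ of $u$ is nonzero. The key construction is then the map $\sigma_\theta\colon H\to H$ acting as $e^{i\theta}\cdot\text{id}$ on $W$ and as $\text{id}$ on $W^\perp$. Because $W$ and $W^\perp$ are both $\pi$-invariant and a scalar commutes with every linear map, $\sigma_\theta$ is a unitary $\mathcal{L}_\pi$-automorphism of $H$; because $W\perp E$ and $A\subseteq E$, the map $\sigma_\theta$ fixes $A$ pointwise. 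Choosing $\theta$ with $e^{i\theta}\neq 1$ then yields $\sigma_\theta(v)-v=(e^{i\theta}-1)u_W\neq 0$, contradicting $v\in\dcl(A)$.

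The main obstacle I anticipate is the careful verification that $\sigma_\theta$ is simultaneously an $\mathcal{L}_\pi$-automorphism and fixes $A$ pointwise. This rests on two points: that $W$, being a $\pi$-subrepresentation, has $\pi$-invariant orthogonal complement (so that multiplication by a scalar on $W$, extended by the identity elsewhere, commutes with every $\pi(g)$), and that every $a\in A$ has its $H^{\text{fin}}$-projection inside $E^{\text{fin}}$, hence is orthogonal to $W$. Both are routine consequences of the isotypic decomposition from Fact~\ref{fact:projectionsPierre}, but they are what makes the argument actually go through.
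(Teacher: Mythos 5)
Your proof is correct, but it does more than the paper does, and the extra work is genuinely needed. The paper's justification is the single phrase ``it follows from the previous proof,'' and the mechanism of that previous proof produces a second realization of $\tp(v/A)$ only by replacing a component lying in some infinite-multiplicity isotypic piece $H_j$ ($1\le j\le m$) with an orthogonal copy; that argument says nothing about a vector whose excess over $E:=\cl(\langle\{\pi(g)a:a\in A,\,g\in G\}\rangle)$ lies entirely inside $H^{\text{fin}}$, where no further orthogonal copies of the relevant irreducible representation need exist. Your reduction to $v\in H^{\text{fin}}$ via the $\emptyset$-definable projections $P_i$ (Proposition \ref{ProyeccionesPierreSonDefinibles}) is the right first step, and your scalar-rotation map $\sigma_\theta=e^{i\theta}P_W+(I-P_W)$ on an irreducible summand $W$ of $(E^{\text{fin}})^\perp\cap H^{\text{fin}}$ is a clean way to finish: since $W$ and $W^\perp$ are $\pi$-invariant, $\sigma_\theta$ is a unitary commuting with every $\pi(g)$ and with all the scalar symbols of $\mathcal{L}$, hence an $\mathcal{L}_\pi$-automorphism of the ($\kappa$-saturated, strongly homogeneous) model fixing $A\subseteq E\subseteq W^\perp$ pointwise while moving $v$. (Equivalently, one could note that $\sigma_\theta(v)$ has the same quantifier-free type as $v$ over $A$ and invoke quantifier elimination, which is closer in spirit to how the paper argues elsewhere.) So where the paper implicitly treats the observation as an immediate corollary, you have identified and filled the one case its argument does not cover; the only cosmetic caveat is that your phrase ``$E\subseteq\dcl(A)$'' in the reduction step is really the ``$\supseteq$'' inclusion you established at the outset, so the two halves of your argument are not independent, but this causes no circularity.
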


For the rest of this section, whenever $A\subset H$, we write $\overline{A}$ for the algebraic closure of $A$ in the language $\mathcal{L}_\pi$.
To deal with non-forking, we introduce an abstract notion of independence and then show it coincides with non-forking.
Our approach follows the argument for a single unitary operator presented in \cite{Argoty}. A similar characterization was  used in \cite[Section 3]{AlexHGGA} to show that that $IHS_\pi$ is superstable when $G$ is countable and the expansion is existentially closed. Instead of repeating again all of the steps, we will prove the key steps that make the arguments work.

\begin{defn}\label{defn:independence}
Let $(H, \pi)\models IHS_\pi$ be $\kappa$-saturated and $\kappa$-strongly homogeneous.
Let $\vec{a} = (a_1,\dots, a_n) \in H^n,$ and let $A,B $ and $C \subset H$ be small. We say that $\vec{a}$ is $\ast$-\textit{independent} \textit{from} $B$ \textit{over} $C$ if for all $1 \leq j\leq n$ and $1\leq i \leq k$ we have $\text{Pr}_{\overline{B\cup C}}(P_i(a_j)) = \text{Pr}_{\overline{C}}(P_i(a_j))$. If $\vec{a}$ is $\ast$-independent from $B$ over $C$ we write $\vec{a}\forkindep[C]B$. Additionally, if all finite subsets $\vec{a}$ of $A$ are such that $\vec{a}\forkindep[C]B$, we say that $A$ is $\ast$-\textit{independent from} $B$ \textit{over} $C$ and we write $A\forkindep[C]B$.

\end{defn}

\begin{lem}\label{CommtingLemmaForking}

Let $C\subseteq H$ be such that $C=\overline{C}$ and let $v \in H$. Then, for all $1\leq i \leq k$ we have $P_i(\text{Pr}_C(v)) = \text{Pr}_C(P_i(v))$.

\end{lem}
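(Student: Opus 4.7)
The plan is to use two observations: first, that the algebraic closure $C=\overline{C}$ is invariant under the action of $G$; second, that each projection $P_i$ is a polynomial (in fact linear combination) in the operators $\pi(g)$, so $C$ is $P_i$-invariant. Once we have this invariance, commutation of the two orthogonal projections is a routine consequence of self-adjointness.

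More precisely, the first step is to recall from the characterization of algebraic closure proved just above that $C=\overline{C}$ contains $H^{\text{fin}}$ and is closed under every $\pi(g)$ with $g\in G$. Since by Fact \ref{fact:projectionsPierre}(2) and Remark \ref{rem:projectionsPierre} we have
\[
P_i = \frac{n_i}{|G|}\sum_{g\in G}\chi_i(g)^{*}\pi(g),
\]
it follows that $P_i(C)\subseteq C$ for every $1\leq i\leq k$. The second step is to observe that $P_i$ is self-adjoint (it is an orthogonal projection onto the isotypic component $H_i$), so the $P_i$-invariance of $C$ implies $P_i$-invariance of $C^{\perp}$: if $v\in C^{\perp}$ and $w\in C$ then $\langle P_i v,w\rangle=\langle v,P_i w\rangle=0$ because $P_i w\in C$.

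The final step is a direct computation. Write $v=v_C+v_{C^{\perp}}$ with $v_C=\text{Pr}_C(v)\in C$ and $v_{C^{\perp}}\in C^{\perp}$. By the invariance just established, $P_i(v_C)\in C$ and $P_i(v_{C^{\perp}})\in C^{\perp}$, so
\[
P_i(v)=P_i(v_C)+P_i(v_{C^{\perp}})
\]
is the orthogonal decomposition of $P_i(v)$ along $C\oplus C^{\perp}$. Therefore $\text{Pr}_C(P_i(v))=P_i(v_C)=P_i(\text{Pr}_C(v))$, as required.

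I do not anticipate any serious obstacle. The only point requiring care is making sure that $\overline{C}$ really is closed under each $\pi(g)$, which is immediate from the explicit description of $\acl$ (and also from the fact that each $\pi(g)$ lies in $\dcl(\emptyset)$ as a definable unitary). Everything else is elementary Hilbert-space geometry.
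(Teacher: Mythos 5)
Your proof is correct and follows essentially the same route as the paper: both arguments reduce the claim to the invariance of $C$ and $C^{\perp}$ under the relevant operators and then read off the commutation from the orthogonal decomposition $v=v_C+v_{C^{\perp}}$. The only cosmetic difference is that the paper first shows $\mathrm{Pr}_C$ commutes with each unitary $\pi(g)$ (using that $\pi(g)$ preserves inner products and acts bijectively on $C$) and then passes to $P_i$ as a linear combination, whereas you work with $P_i$ directly via its self-adjointness; both are equally valid.
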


\begin{proof}

Notice that for all $v \in H$ we can write $v = v_1 + v_2$, where $v_1 \in C$ and $v_2 \in C^\perp$. Let $g\in G$, then for all $c \in C$ we have
\[
\pi(g)c \in C \text{ and } \gen{\pi(g)c, \pi(g)v_2} = \gen{c, v_2} = 0.
\]
Since $\pi(g)$ acts in $C$ as a bijection, we obtain $\pi(g)v_2 \in C^\perp$. Thus, 
\[
\text{Pr}_C\pi(g)v = \pi(g)v_1 =  \pi(g)\text{Pr}_Cv,
\]
this means that $\text{Pr}_C$ and $\pi(g)$ commute for all $g \in G$. Since the projection $P_i$ is a linear combination of the operators $\Parl{\pi(g)}_{g \in G}$ (see Proposition \ref{ProyeccionesPierreSonDefinibles}), it follows that $\text{Pr}_C$ commutes with the projection $P_i$ for all $1\leq i \leq k$.

\end{proof}

The previous result will allow us to characterize independence over closed sets without  using the projections $P_i$ over the components, just as was done in \cite[section 3]{AlexHGGA}:

\begin{cor}\label{forkingEquivalenceGFinite}

Let $\vec{a} = (a_1,\dots, a_n) \in H^n,$ and let $B, C \subset H$ be small. Then, $\vec{a}\forkindep[C]B$ if and only if for each $1\leq j \leq n$, we have $ \text{Pr}_{\overline{B\cup C}}a_j = \text{Pr}_{\overline{C}}a_j$.

\end{cor}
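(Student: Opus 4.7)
The plan is to reduce the componentwise condition in Definition \ref{defn:independence} to the single statement about ordinary projections onto $\overline{C}$ and $\overline{B\cup C}$ via Lemma \ref{CommtingLemmaForking}. First I would check that the two closures $\overline{C}$ and $\overline{B\cup C}$ really are closed $\pi$-invariant subspaces of $H$: by the description of the algebraic closure in Proposition immediately preceding the corollary, $\acl(A)=\cl(\gen{\{\pi(g)(a):a\in A,g\in G\}\cup H^{\text{fin}}})$, and this set is plainly closed under every $\pi(h)$, since $\pi(h)\pi(g)(a)=\pi(hg)(a)$ and $H^{\text{fin}}$ is invariant. Hence both $\overline{C}$ and $\overline{B\cup C}$ satisfy the hypothesis of Lemma \ref{CommtingLemmaForking}.

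Next, I would apply that lemma to rewrite, for each $1\leq j\leq n$ and $1\leq i\leq k$,
\[
\text{Pr}_{\overline{B\cup C}}(P_i(a_j))=P_i(\text{Pr}_{\overline{B\cup C}}(a_j)),\qquad \text{Pr}_{\overline{C}}(P_i(a_j))=P_i(\text{Pr}_{\overline{C}}(a_j)).
\]
Thus $\vec a\forkindep[C]B$ is equivalent to $P_i(\text{Pr}_{\overline{B\cup C}}(a_j))=P_i(\text{Pr}_{\overline{C}}(a_j))$ for all $i,j$. Now I would invoke the orthogonal decomposition $H=H_1\oplus\cdots\oplus H_k$ provided by Fact \ref{fact:projectionsPierre}, which gives $\sum_{i=1}^k P_i=\mathrm{id}_H$; consequently, two vectors of $H$ agree if and only if all their $P_i$-components agree. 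Applied to the vectors $\text{Pr}_{\overline{B\cup C}}(a_j)$ and $\text{Pr}_{\overline{C}}(a_j)$ for each $j$, this yields the stated equivalence with $\text{Pr}_{\overline{B\cup C}}(a_j)=\text{Pr}_{\overline{C}}(a_j)$.

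The converse direction is immediate from the same identities: if $\text{Pr}_{\overline{B\cup C}}(a_j)=\text{Pr}_{\overline{C}}(a_j)$, apply $P_i$ to both sides and use Lemma \ref{CommtingLemmaForking} again to read off the $\ast$-independence condition. I do not anticipate a genuine obstacle here; the only thing one has to be careful about is precisely the $\pi$-invariance of $\overline{C}$ and $\overline{B\cup C}$ needed to invoke Lemma \ref{CommtingLemmaForking}, since that lemma is stated only for subspaces on which $\text{Pr}_C$ and $\pi(g)$ commute.
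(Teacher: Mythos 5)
Your proposal is correct and follows essentially the same route as the paper's proof: commute $P_i$ with the orthogonal projections via Lemma \ref{CommtingLemmaForking} and then use $\sum_{i=1}^k P_i=\mathrm{id}_H$ to pass between the componentwise and the global statements. Your additional verification that $\overline{C}$ and $\overline{B\cup C}$ are $\pi$-invariant (so that the lemma applies) is a point the paper leaves implicit, but it is not a different argument.
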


\begin{proof}

Suppose that $\vec{a}\forkindep[C]B$. Then by Lemma \ref{CommtingLemmaForking} for all $1 \leq j\leq n$ and $1\leq i \leq k$ we have 
\[
\text{Pr}_{\overline{B\cup C}}P_ia_j = \text{Pr}_{\overline{C}}P_ia_j \text{ if and only if } P_i\text{Pr}_{\overline{B\cup C}}a_j = P_i\text{Pr}_{\overline{C}}a_j.
\]
Also, for each $v \in H$ we have that $v = \sum_{i=1}^kP_iv$. Then, for all $1 \leq j \leq n$, we have the following equivalence
\[
P_i\text{Pr}_{\overline{B\cup C}}a_j = P_i\text{Pr}_{\overline{C}}a_j \text{ for each } 1 \leq i \leq k  \text{ if and only if } \text{Pr}_{\overline{B\cup C}}a_j = \text{Pr}_{\overline{C}}a_j.
\]
\end{proof}

\begin{prop}\label{forkingTuplesAndProjections}

(Triviality)

Let $\vec{a} = (a_1,\dots, a_n) \in H^n$ and $\vec{b} = (b_1,\dots, b_\ell) \in H^\ell$, and let $C \subset H$ be small. Then, $\vec{a}\forkindep[C]\vec{b}$ if and only if for all  $1\leq i \leq k$, $1\leq j_1 \leq n$ and $1\leq j_2 \leq \ell$ we have $P_i(a_{j_1})\forkindep[C]P_i(b_{j_2})$. 

\end{prop}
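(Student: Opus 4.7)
The plan is to reduce both sides of the equivalence to statements about orthogonal projections onto algebraic closures, and then compare them component by component using the $\pi$-invariance of such closures.

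First I would apply Corollary \ref{forkingEquivalenceGFinite} to both sides. The left-hand side becomes the condition that $\text{Pr}_{\overline{\vec b\cup C}}(a_{j_1})=\text{Pr}_{\overline{C}}(a_{j_1})$ for all $j_1$. For the right-hand side, unfolding Definition \ref{defn:independence} for the singleton $P_i(a_{j_1})$ against the set $\{P_i(b_{j_2})\}$, one gets a condition involving $P_{i'}(P_i(a_{j_1}))$ for every $i'$; however $P_{i'}P_i=0$ when $i'\neq i$ and $P_iP_i=P_i$, so the statement collapses to $\text{Pr}_{\overline{\{P_i(b_{j_2})\}\cup C}}(P_i(a_{j_1}))=\text{Pr}_{\overline{C}}(P_i(a_{j_1}))$ for every $i,j_1,j_2$. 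Next I would use Lemma \ref{CommtingLemmaForking}, which tells us that each $P_i$ commutes with $\text{Pr}_{\overline{E}}$ for any $E$, to rewrite the left-hand condition as: for every $i$ and $j_1$, $\text{Pr}_{\overline{\vec b\cup C}}(P_i(a_{j_1}))=\text{Pr}_{\overline{C}}(P_i(a_{j_1}))$. (The equivalence with the original form uses $\sum_i P_i=\id$.)

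Second, I would exploit the decomposition from Fact \ref{fact:projectionsPierre}. Any algebraically closed subspace is $\pi$-invariant, hence splits as a direct sum indexed by the components; writing $D=\overline{\vec b\cup C}$, $E_{i,j_2}=\overline{\{P_i(b_{j_2})\}\cup C}$, and denoting the $i$-th summand of a subspace $F$ by $F_i=P_i(F)=F\cap H_i$ (or $F\cap V_i$), one has $D_i=\overline{C}_i+\sum_{j_2}\overline{\gen{\pi(g)P_i(b_{j_2}):g\in G}}$ and $(E_{i,j_2})_i=\overline{C}_i+\overline{\gen{\pi(g)P_i(b_{j_2}):g\in G}}$, using the commutation $\pi(g)P_i=P_i\pi(g)$ proved inside Lemma \ref{CommtingLemmaForking}. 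Since $P_i(a_{j_1})\in H_i$, the projections appearing on both sides equal the projections onto these $i$-th summands.

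Third, setting $w:=P_i(a_{j_1})-\text{Pr}_{\overline{C}_i}(P_i(a_{j_1}))$, which is automatically orthogonal to $\overline{C}_i$, the left-hand condition (for fixed $i,j_1$) is equivalent to $w\perp D_i$, and by the above description this happens iff $w$ is orthogonal to the $\pi$-orbit of $P_i(b_{j_2})$ for every $j_2$. The right-hand condition (for fixed $i,j_1,j_2$) is equivalent to $w\perp (E_{i,j_2})_i$, i.e., $w$ is orthogonal to the $\pi$-orbit of $P_i(b_{j_2})$. Hence the left-hand condition for all $i,j_1$ is equivalent to the right-hand condition for all $i,j_1,j_2$, which proves the proposition. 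The main technical point to handle carefully is the identification of the $i$-th summand of the algebraic closure $\overline{\vec b\cup C}$ as the closed span generated by $\overline{C}_i$ together with the $\pi$-orbits of $P_i(b_{j_2})$; this relies on the fact that the definable projections $P_i$ already live in the language, so $P_i(b_{j_2})\in\dcl(b_{j_2})\subseteq \overline{\vec b\cup C}$.
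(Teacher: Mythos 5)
Your proof is correct and follows essentially the same route as the paper's: both rest on Lemma \ref{CommtingLemmaForking} together with the component decomposition of $\pi$-invariant algebraically closed subspaces, and your reduction of both sides to the condition that $P_i(a_{j_1})-\mathrm{Pr}_{\overline{C}}(P_i(a_{j_1}))$ be orthogonal to the $\pi$-orbits of the $P_i(b_{j_2})$ is precisely the paper's argument for the backward implication. The only difference is organizational: the paper treats the forward direction separately via monotonicity of projections along the chain $\overline{C}\subseteq\overline{C\cup\{P_i b_{j_2}\}}\subseteq\overline{C\cup\{b_1,\dots,b_\ell\}}$, whereas you obtain both directions at once from the single orthogonality characterization.
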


\begin{proof}

Assume that $\vec a\forkindep[C]\vec b$, then for all $1\leq i \leq k$ and all $1\leq j_1 \leq n$ we have $P_i(\text{Pr}_{\overline{\Parl{b_1,\dots,b_\ell}\cup C}}(a_{j_1}))= P_i(\text{Pr}_{\overline{C}}(a_{j_1}))$. Since
\[
\overline{C} \subseteq \overline{C \cup \Parl{P_ib_{j_2}}} \subseteq \overline{C\cup \Parl{b_{j_2}}} \subseteq \overline{C\cup \Parl{b_1,\dots, b_\ell}},
\]
we have $P_i(\text{Pr}_{\overline{C\cup\Parl{P_ib_{j_2}}}}(a_{j_1})) = P_i(\text{Pr}_{\overline{C}}(a_{j_1}))$. By Lemma \ref{CommtingLemmaForking}, we have
$P_i(a_{j_1})\forkindep[C]P_m(b_{j_2})$.\\

Now, assume for all $1\leq i \leq k, 1\leq j_1 \leq n$ and $1\leq j_2 \leq \ell$ we have that $P_i a_{j_1}\forkindep[C]P_i b_{j_2}$. Let us write $P_ia_{j_1} = \text{Pr}_{\overline{C}}(P_ia_{j_1}) + \text{Pr}_{\overline{C}^\perp}(P_ia_{j_1})$. By hypothesis the projection of $P_ia_{j_1}$ over $\text{Pr}_{\overline{C}^\perp}(P_ib_{j_2})$ is equal to $0$. Then $\text{Pr}_{\overline{C}^\perp}(P_ia_{j_1})$ is orthogonal to $P_ib_{j_2}$. Similarly, for any $g\in G$ we will obtain that $\text{Pr}_{\overline{C}^\perp}(P_ia_{j_1})$ is orthogonal to $\pi(g)(P_ib_{j_2})$. This implies that $\text{Pr}_{\overline{C}^\perp}(P_ia_{j_1})\!\perp\!\overline{C \cup \Parl{P_ib_{j_2}}}$.  The above orthogonality relation holds for each $b_{j_2}$ with $1 \leq j_2 \leq \ell$, making the projection of $\text{Pr}_{\overline{C}^\perp}(P_ia_{j_1})$ on $\overline{C \cup \Parl{P_ib_1, \dots, P_ib_\ell}}$ equal to 0. Then we obtain 
\[
\text{Pr}_{\overline{C}}(P_ia_{j_1}) = \text{Pr}_{\overline{C\cup\Parl{P_ib_1, \dots, P_ib_\ell}}}(P_ia_{j_1}). 
\]
Observe that the $i$-th component $H_i$ of $H$ is closed under the action of the operators $\Parl{\pi(g)}_{g\in G}$ and subspace projections. Then
\[
\text{Pr}_{\overline{C\cup\Parl{P_ib_1, \dots, P_ib_\ell}}}(P_ia_{j_1}) = \text{Pr}_{\overline{C\cup\Parl{b_1, \dots, b_\ell}}}(P_ia_{j_1}).
\]
Thus, $\text{Pr}_{\overline{C}}(P_ia_{j_1}) = \text{Pr}_{\overline{C\cup\Parl{b_1, \dots, b_\ell}}}(P_ia_{j_1})$ for all $1\leq i \leq k$ and $1\leq j_1 \leq n$.

\end{proof}

\begin{theo}\label{thm:charindepence} Let $(K, \pi)\models IHS_\pi$ be $\kappa$-saturated. Then the notion $*$-independence agrees with non-forking and non-forking is trivial.   
\end{theo}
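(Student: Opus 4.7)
The plan is to verify that $*$-independence satisfies the standard list of axioms characterizing non-forking in a stable theory and then to invoke the uniqueness of non-forking in the $\Ao$-stable theory $IHS_\pi$ (Theorem \ref{stabilityGfinite}), following the pattern of \cite{Argoty} and \cite[Section 3]{AlexHGGA}. Triviality will then drop out directly from Proposition \ref{forkingTuplesAndProjections}.

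First I would use Corollary \ref{forkingEquivalenceGFinite} to reformulate $\forkindep$ as: $\vec a\forkindep[C] B$ iff $\text{Pr}_{\overline{B\cup C}}(a_j)=\text{Pr}_{\overline C}(a_j)$ for every $j$. This is the classical Hilbert space non-forking relation, taken relative to the closure operator provided by $\acl$ in $\mathcal{L}_\pi$. From this formulation, invariance (the projections $P_i$ are $\emptyset$-definable, so $\acl$, and hence $\forkindep$, is preserved by $\mathcal{L}_\pi$-automorphisms), symmetry, transitivity, finite character, and local character over separable sets follow directly from the corresponding well-known properties of Hilbert space non-forking.

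Stationarity over algebraically closed sets is the crucial step. Given $C=\overline C$ and $\tp_\pi(\vec a/C)=\tp_\pi(\vec b/C)$ with $\vec a\forkindep[C] D$ and $\vec b\forkindep[C] D$, I would aim to show $\tp_\pi(\vec a/D)=\tp_\pi(\vec b/D)$. By Proposition \ref{cuantifiereliminationGfinite} it suffices to produce an $\mathcal{L}_\pi$-isomorphism between the closed substructures generated by $D\cup\vec a$ and $D\cup\vec b$. The independence hypothesis together with Remark \ref{projectagree} gives that, for every $g\in G$ and every $d\in D$, the inner products $\langle\pi(g)(a_j),d\rangle$ and $\langle\pi(g)(b_j),d\rangle$ agree, so the natural map sending $\vec a\mapsto\vec b$ and fixing $D$ extends to the desired $\mathcal{L}_\pi$-isomorphism on the span, which in turn extends to the ambient space using the $\Ao$-categoricity of the restriction to the orthogonal complement (Corollary \ref{cor:wCategoricityGFinite}). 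Once stationarity holds, the extension axiom is standard, and by uniqueness of non-forking in the stable theory $IHS_\pi$, $*$-independence coincides with non-forking.

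Triviality then follows cleanly from Proposition \ref{forkingTuplesAndProjections} combined with the triviality of Hilbert space orthogonality: if $\vec a\forkindep[C] \vec b$ and $\vec a\forkindep[C] \vec d$, then for every $i,j,s,t$ the vector $P_i(a_j)-\text{Pr}_{\overline C}(P_i(a_j))$ is orthogonal to both $P_i(b_s)$ and $P_i(d_t)$, hence to the closed span $\overline{C\cup\{b_s,d_t\}}$, giving $\vec a\forkindep[C] \vec b\vec d$. The main obstacle I anticipate is the careful bookkeeping in the stationarity step given the decomposition $H=H^{\text{inf}}\oplus H^{\text{fin}}$; but since $H^{\text{fin}}\subseteq\acl(\emptyset)\subseteq C$, the finite components contribute nothing beyond $C$ to the projection data, and the argument reduces to the infinite-component case.
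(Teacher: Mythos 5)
Your proposal is correct and follows essentially the same route as the paper: both verify that $*$-independence satisfies the standard axioms characterizing non-forking in a stable theory (citing \cite{Argoty} for the routine ones) and then derive triviality from Proposition \ref{forkingTuplesAndProjections}. The only difference is emphasis --- the paper writes out finite character and defers the rest, while you write out stationarity via quantifier elimination and Remark \ref{projectagree} --- but the underlying argument is the same.
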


\begin{proof}
It is enough to show that $*$-independence satisfies finite character, local character, transitivity, symmetry, invariance, existence, and stationarity. We check finite character, the other properties can be easily checked using the approach from \cite{Argoty}.

Finite character: let $\vec a=(a_1,\dots,a_n)\in K^n$ be a finite tuple, and let $B, C\subset K$ be small. We prove that if $\vec a\forkindep_C B_0$ for all finite $B_0\subseteq B$ then $\vec a \forkindep_C B$. Note that if $\vec{a}\forkindep_C B_0$, then $\Pr_{\overline{B_0\cup C}}(a_j) = \Pr_{\overline{C}} (a_j)$ for all $1\leq j \leq n$. If this happens for all finite $B_0\subseteq B$ then $\Pr_{\overline{B\cup C}}(a_j) = \Pr_{\overline{C}} (a_j)$ for all $1\leq j\leq n$ as desired.

Finally, triviality of forking follows from the previous result and Proposition \ref{forkingTuplesAndProjections}.
\end{proof}






There are some easy applications of our characterization of non-forking, among them:

\begin{prop}
    The theory $IHS_\pi$
    is non-multidimensional.
\end{prop}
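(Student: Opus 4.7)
The plan is to show that every non-algebraic type $p \in S(M)$ over a model $M \models IHS_\pi$ is non-orthogonal to some type over $\emptyset$, which is the standard way to establish non-multidimensionality in a stable theory. I will exploit the decomposition of $H$ into components (Definition \ref{def:unitaryRepresentationGFinite}) together with the characterization of non-forking in Theorem \ref{thm:charindepence}: the ``new'' part of any realization of $p$ relative to $M$ must live nontrivially in some infinite component $H_{i_0}$, and its type over $\emptyset$ will be the desired witness.

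First I would take $v \models p$. Since $v \notin \acl(M)$, $v \neq \Pr_M(v)$; and since every finite component $V_i \subseteq H^{\text{fin}} \subseteq \acl(\emptyset) \subseteq M$ for $m+1 \leq i \leq k$, the projections $P_i(v)$ lie in $M$ whenever $i > m$. Hence there exists some $1 \leq i_0 \leq m$ with $w := P_{i_0}(v) - \Pr_M(P_{i_0}(v)) \neq 0$, and by Lemma \ref{CommtingLemmaForking} we have $w \in H_{i_0}$ and $w \perp M$. Set $q := \tp_\pi(w/\emptyset)$; this type is non-algebraic because $H_{i_0}$ contains countably many orthogonal copies of $W_{i_0}$, producing infinitely many conjugates of $w$ under $\mathrm{Aut}(H,\pi)$.

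Next I would verify that $w \ind_\emptyset M$ via Theorem \ref{thm:charindepence}: for each $1 \leq j \leq k$, both $\Pr_M(P_j(w))$ and $\Pr_{\acl(\emptyset)}(P_j(w))$ vanish, the first because $w \perp M$ and the second because $w \in H_{i_0}$ is orthogonal to $H^{\text{fin}}$. Hence $w$ realizes the non-forking extension $q|M$. To conclude $p \not\perp q$ it then suffices to show that this specific pair $(v, w)$ is dependent over $M$; a direct computation gives $\langle v, w \rangle = \langle P_{i_0}(v), w \rangle = \|w\|^2 \neq 0$, so $\Pr_{\overline{M \cup \{w\}}}(v) \neq \Pr_{\overline{M}}(v)$, and Corollary \ref{forkingEquivalenceGFinite} yields $v \nind_M w$.

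There is no serious obstacle; the only point requiring minor care is ensuring that the extracted witness $w$ really does realize the non-forking extension $q|M$ (so that mere dependence of $(v,w)$ translates into genuine non-orthogonality of types), and for this the combination of $w \perp M$ with $w \in H_{i_0} \subseteq (H^{\text{fin}})^\perp$ is exactly what Theorem \ref{thm:charindepence} asks for. In fact the argument shows that there are at most $m$ orthogonality classes of regular types, one associated to each infinite component $H_i$, which is a sharper statement than what is needed.
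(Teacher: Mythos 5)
Your proof is correct and takes essentially the same route as the paper's: the paper simply sets $q=\tp_\pi(a-\text{Pr}_{C}(a)/\emptyset)$ and observes that $a-\text{Pr}_{C}(a)\not\forkindep[C] a$, while you refine the witness to a single infinite component $P_{i_0}(v)-\text{Pr}_{M}(P_{i_0}(v))$ and explicitly check the points the paper leaves implicit (non-algebraicity of $q$ and that the witness realizes the non-forking extension $q|M$, via Lemma \ref{CommtingLemmaForking} and Theorem \ref{thm:charindepence}). The underlying idea --- the part of a realization orthogonal to the base yields a type over $\emptyset$ non-orthogonal to $p$ --- is identical.
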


\begin{proof}
    It suffices to prove that any non-algebraic stationary type is not orthogonal to a type over $\emptyset$. Let $a\in H$, let $C\subseteq H$ be small and algebraically closed. Consider $p=\tp(a/C)$ and $q=\tp(a-\text{Pr}_{C}(a)/\emptyset)$.
    Then, if $p$ is non-algebraic, we have $a-\text{Pr}_{C}(a)\neq 0$ and clearly $a-\text{Pr}_{C}(a)\not \forkindep[C] a$.
\end{proof}

We can classify the models of $IHS_\pi$ in terms of the density character of the irreducible representations that appear in the model. This gives a classification of models of $IHS_\pi$ in terms of finitely many cardinals. We will now study  more ``geometric complexity'' aspects of the theory. For this we need:

\begin{defn}
Let $(H,\pi)$ be a separable model of $IHS_\pi$, and let $K\subseteq H$ be a closed subspace, which is invariant under the action of $\pi$ and is such that $(H,\pi)\succcurlyeq(K,\pi\!\!\restriction_{K})$. Let $P$ be the predicate on $H$ that measures the distance to the subspace $K$. Then, if $(H,\pi)$ is $\Ao$-saturated over $(K,\pi\!\!\restriction_{K})$ and $(K,\pi\!\!\restriction_{K})$ is $\Ao$-saturated, we call the pair of structures $((H,\pi),P)$ in the language $\mathcal{L}_\pi\cup\parl{P}$ a \textit{belle paire}. We write $T_{\pi P}$ for the theory of 
belles paires of models of $IHS_\pi$. Sometimes instead of writing $((H,\pi),P)$ we will abuse notation and write $((H,\pi),K)$ for the same structure.  
\end{defn}
Belle paires were first defined in first order by Poizat in \cite{PoiBelle}. There are many applications of belles paires, among them the work of the first named author of the paper with Ben Yaacov and Henson around the notion of the topology of convergence of canonical bases \cite{BYBH}.
Recall from \cite{BYBH} that a stable theory is \emph{SFB (strongly finitely based)} if the topology of convergence of canonical bases coincides with the distance topology on the space of types over models. This notion is a reasonable continuous analogue to the notion of $1$-basedness for stable first order theories, for more details see \cite{BYBH}. 
In this paper we will need belle paires for the following result:

\begin{fact}[Theorem 3.10 \cite{BYBH}]\label{fact:SBFEquivalence}
Let $T$ be any stable continuous theory. Then $T_P$ is $\Ao$-categorical if and only if
$T$ is $\Ao$-categorical and SFB.    
\end{fact}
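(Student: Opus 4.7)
The plan is to characterize $\Ao$-categoricity of $T_P$ by analyzing how types over separable belles paires decompose. The key observation is that a type $q \in S(T_P)$ over a separable belle paire $(M,N)$ is determined by two pieces of data: its restriction to the $T$-reduct, which is a type in $S(M)$, and the canonical base $\text{Cb}(q|N) \subset N$ of the (necessarily non-forking over $N$) extension to $M$. Thus the density character of the space of $T_P$-types over a separable belle paire is controlled by the density characters of these two components, which is why the two hypotheses ($\Ao$-categoricity of $T$ and SFB) combine to produce $\Ao$-categoricity of $T_P$.

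For the forward implication, $\Ao$-categoricity of $T$ is essentially immediate, since $T$ is a reduct of $T_P$ and any separable model of $T$ embeds as the small part of a separable belle paire. For SFB, I would argue contrapositively: if SFB fails, then there is a separable model $N_0 \models T$ and a sequence of types in $S(N_0)$ whose canonical bases converge in the convergence topology while remaining uniformly separated in the distance topology. I would use this family to build two separable belles paires $(M_1,N)$ and $(M_2,N)$ with the same (up to $\Ao$-saturated) small part but in which the realizations of these types sit at essentially different distances from $N$, obtaining non-isomorphic separable models of $T_P$ and contradicting $\Ao$-categoricity of $T_P$.

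For the backward implication, assume $T$ is $\Ao$-categorical and SFB, and take two separable belles paires $(M_1,N_1)$ and $(M_2,N_2)$. By $\Ao$-categoricity of $T$ applied to the small models (both are separable and $\Ao$-saturated by definition of belle paire), there is an isomorphism $\sigma \colon N_1 \to N_2$. I would extend $\sigma$ to an isomorphism of the belles paires by a back-and-forth construction: at each stage, SFB guarantees that the relevant stationary type over $N_i$ of the element to be added is determined by a canonical base living in $N_i$, which transfers cleanly under $\sigma$; then the $\Ao$-saturation of $M_j$ over $N_j$ produces the required realization on the other side. Iterating yields the desired isomorphism of belles paires.

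The main obstacle I expect is the forward direction, specifically producing witnesses in $T_P$ to a failure of SFB. One must make the correspondence between the convergence topology on canonical bases and the type space of $T_P$ precise enough to convert a distance-separated but topologically convergent sequence of canonical bases into two genuinely non-isomorphic separable belles paires. All of the subtlety of the interaction between the two topologies has to be encoded as an actual obstruction to isomorphism; handling this carefully, rather than merely elementary equivalence, is the delicate point.
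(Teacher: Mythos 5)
This statement appears in the paper only as a quoted Fact, imported verbatim from Theorem 3.10 of \cite{BYBH}; the paper supplies no proof, so there is nothing internal to compare your argument against. Judged on its own terms, your sketch points in a reasonable general direction but has genuine gaps exactly where the content of the theorem lies. In the forward direction, your justification of the $\Ao$-categoricity of $T$ --- that ``any separable model of $T$ embeds as the small part of a separable belle paire'' --- is not available: the small part of a belle paire must itself be $\Ao$-saturated, and the existence of \emph{separable} genuine belles paires (as opposed to separable models of $T_P$, which need not be belles paires) is essentially part of what is being proven. The clean route is via the metric Ryll--Nardzewski theorem: the reduct map $S_n(T_P)\to S_n(T)$ is surjective and $1$-Lipschitz for the respective $d$-metrics, so metric compactness of $S_n(T_P)$ transfers to $S_n(T)$.

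You also misattribute the role of SFB in the back-and-forth. For a stable theory the canonical base of a type over a model always lies in (the appropriate closure of) that model; that is automatic and is not what SFB contributes. Its content is purely topological: combined with $\Ao$-categoricity of $T$, it makes the space of types over a separable model metrically compact (the convergence topology on canonical bases is compact, and SFB identifies it with the $d$-topology), and \emph{that} is what guarantees both that separable belles paires exist and that $\Ao$-saturation of $M_i$ over $N_i$ forces each $M_i$ to approximately realize \emph{all} types over $N_i$ together with finitely many extra parameters --- without which your back-and-forth has no reason to close. Symmetrically, in the forward direction a failure of SFB should be converted, using compactness of the convergence topology, into an infinite uniformly $d$-separated family of types over a separable model, hence into a failure of metric compactness of $S_1(T_P)$, contradicting $\Ao$-categoricity of $T_P$ by Ryll--Nardzewski; your plan to hand-build two non-isomorphic separable belles paires by selectively realizing and omitting such types is far more delicate than necessary and is the step most likely to fail. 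For the actual argument, consult the proof of Theorem 3.10 in \cite{BYBH}.
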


Our next goal is to show that $IHS_\pi$ has SFB. We already know $IHS_\pi$ is $\Ao$-categorical and stable, so by Fact \ref{fact:SBFEquivalence}, it remains to prove that $T_{\pi P}$ is $\Ao$-categorical. In order to show this, let $(H,\pi)$, $(K,\rho)$ be separable models of $IHS_\pi$, by Corollary \ref{cor:wCategoricityGFinite}, we have a complete description of these models in terms of the invariant subspaces, so  we can write
\[
H = H^{\text{inf}}\oplus H^{\text{fin}} \text{ where } H^{\text{fin}} =  V_{m+1}\oplus \ldots \oplus V_{k},
\]
and
\[
K = K^{\text{inf}}\oplus K^{\text{fin}} \text{ where } K^{\text{fin}} =  V_{m+1}\oplus \ldots \oplus V_{k}.
\]
Notice that the finite dimensional components of the models are  isomorphic as representations of $G$ in finite dimensional Hilbert spaces.
\begin{lem}\label{BeautifulFIniteLemma}

Let $(H,\pi)\succcurlyeq(K,\rho)$ and assume $((H,\pi),K)$ is a separable \textit{belle paire} of models of $IHS_\pi$. Write 
$K^\perp$ for the subspace $K^\perp \cap H$ of $H$, which is invariant under the action of $\pi$. Then, 
\[
(K^\perp,\pi\!\!\restriction_{K^\perp}) \cong (H^{\text{inf}}, \pi\!\!\restriction_{H^{\text{inf}}}).
\]

\end{lem}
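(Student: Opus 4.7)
The plan is to apply the classification of separable models in Corollary \ref{cor:wCategoricityGFinite} to the subrepresentation $(K^\perp,\pi\!\!\restriction_{K^\perp})$. First I would observe that $K^\perp$ is $\pi$-invariant: each $\pi(g)$ is unitary and preserves $K$, so it preserves $K^\perp$. Hence $(K^\perp,\pi\!\!\restriction_{K^\perp})$ is a separable representation of $G$, and by Facts \ref{irreducibleDecompositionRepresentation} and \ref{decompositionRegularRepresentation} it decomposes as $\bigoplus_{i=1}^{k} W_i^{\mu_i}$ for some multiplicities $\mu_i\in\mathbb{N}\cup\{\aleph_0\}$. It suffices to show $\mu_i=\aleph_0$ for $1\leq i\leq m$ and $\mu_i=0$ for $m+1\leq i\leq k$, since then Theorem \ref{GFinitioAocategoricidad} produces the desired $\mathcal{L}_\pi$-isomorphism $(K^\perp,\pi\!\!\restriction_{K^\perp})\cong (H^{\text{inf}},\pi\!\!\restriction_{H^{\text{inf}}})$.

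For the finite components ($m+1\leq i\leq k$), since $P_i$ is $\mathcal{L}_\pi$-definable (Proposition \ref{ProyeccionesPierreSonDefinibles}) and $K\preccurlyeq H$, the sentence (\ref{equation:SentenceIrreducibleFinMult}) asserts that in both $K$ and $H$ the image of $P_i$ has dimension exactly $n_id_i$. Since $P_i(K)\subseteq P_i(H)$, equality of finite dimensions gives $P_i(K)=P_i(H)=V_i\subseteq K$, so $V_i\cap K^\perp=\{0\}$ and $\mu_i=0$.

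For the infinite components ($1\leq i\leq m$), the strategy is to construct recursively an infinite orthogonal sequence of copies of $W_i$ inside $K^\perp$, using $\aleph_0$-saturation of $H$ over $K$. Suppose $n$ such orthogonal copies with orthonormal bases $\vec v_1,\ldots,\vec v_n\in K^\perp$ have already been produced. The type $p(\vec x)$ over $K\cup\{\vec v_1,\ldots,\vec v_n\}$ expressing ``$\vec x$ is an orthonormal basis of a copy of $W_i$ (in the sense of Scheme \ref{EsquemaProyeccionesPierre}) orthogonal to $K$ and to every $\vec v_j$'' is consistent: it is realized by the basis of a fresh copy of $W_i$ in the elementary extension $(H\oplus W_i,\pi\oplus\pi_i)$, which is elementarily equivalent to $(H,\pi)$ by $\aleph_0$-categoricity (Corollary \ref{cor:wCategoricityGFinite}). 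Since $p$ has parameters in $K$ together with only finitely many vectors of $H$, $\aleph_0$-saturation of $H$ over $K$ realizes $p$ inside $H$, and the realization automatically lies in $K^\perp$. Iterating this step produces $\aleph_0$ pairwise orthogonal copies of $W_i$ in $K^\perp$, so $\mu_i\geq\aleph_0$; separability of $K^\perp$ forces equality.

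The main delicate point I anticipate is this last saturation step: one must verify that ``$\aleph_0$-saturation of $H$ over $K$'' indeed allows the type $p$, which combines parameters from $K$ with a finite set of vectors already produced in $H$, and that the elementary extension $(H\oplus W_i,\pi\oplus\pi_i)$ is really a model of $IHS_\pi$ (both routine once one spells out the belle paire conventions). Once these checks are in place, the computed multiplicities of $K^\perp$ match those of $H^{\text{inf}}$ and Theorem \ref{GFinitioAocategoricidad} delivers the claimed isomorphism.
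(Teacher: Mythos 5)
Your proposal is correct and follows essentially the same route as the paper: show that the finite isotypic components $V_i$ ($m+1\leq i\leq k$) already lie inside $K$ so that $K^\perp$ contains only the irreducibles of $H^{\text{inf}}$, then use $\aleph_0$-saturation of $H$ over $K$ to produce $\aleph_0$ pairwise orthogonal copies of each $W_i$ ($1\leq i\leq m$) inside $K^\perp$, and conclude by matching multiplicities. The paper's version is just terser (and additionally records, for later use, that $\Pr_K$ and $\Pr_{K^\perp}$ are definable in the pair language), so your write-up can stand as a more detailed account of the same argument.
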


\begin{proof}
Recall that the predicate $P(v) = \min_{w \in K} \norm{v-w}$ measures the distance to $K$. Then $\Pr_{K} (v) = \text{arg min }P(v)$ (the projection of $v$ in the subspace $K$) is definible in the extended language $\mathcal{L}_\pi\cup\Parl{P}$ (see for example Proposition 2.4 \cite{BHV} for a proof). It is also easy to see that the distance from a vector $v$ to $K^{\perp}$ is given by $\sqrt{\|v\|^2-P(v)^2}$ and so we can quantify over $K^{\perp}$ and we get that the projection over $K^{\perp}$ is definable as well.  The space $Pr_{K}(H)^\perp = K^\perp$ only has copies of the irreducible representations of $G$ appearing in $H^{\text{inf}}$. Moreover, since $(H,\pi)\succcurlyeq(K,\rho)$ is an $\Ao$-saturated extension, each irreducible representation appearing in $H^{\text{inf}}$ also appears in $K^\perp$ and has dimension equal to $\Ao$. From this we get the desired isomorphism.

\begin{rem}\label{rm:infdim}
Notice that since we can quantify over $K^\perp$, we can express that $\dim (H_i)\cap K^\perp\geq \Ao$ for $1\leq i \leq m$ as a scheme of sentences that belong $T_{\pi P}$. 
\end{rem}

\end{proof}

\begin{thm}\label{BellePaires-cat}
The theory of \textit{belle paires} of $IHS_\pi$ is $\Ao$-categorical.
\end{thm}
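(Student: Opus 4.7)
The plan is to show that any two separable belles paires $((H_1,\pi_1),K_1)$ and $((H_2,\pi_2),K_2)$ of models of $IHS_\pi$ are isomorphic as structures in the language $\mathcal{L}_\pi\cup\{P\}$. I would first use the $\aleph_0$-categoricity of $IHS_\pi$ (Corollary \ref{cor:wCategoricityGFinite}) applied to the small models $(K_1,\pi_1\!\!\restriction_{K_1})$ and $(K_2,\pi_2\!\!\restriction_{K_2})$, which are separable models of $IHS_\pi$ by the definition of belle paire, to produce an $\mathcal{L}_\pi$-isomorphism $\phi_K\colon K_1 \to K_2$.

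Next I would build an isomorphism $\phi_\perp\colon K_1^\perp \to K_2^\perp$ between the orthogonal complements (taken inside $H_1$ and $H_2$ respectively). By Lemma \ref{BeautifulFIniteLemma}, each $K_i^\perp$ is isomorphic, as an $\mathcal{L}_\pi$-structure, to $(H_i^{\text{inf}},\pi_i\!\!\restriction_{H_i^{\text{inf}}})$. Since $H_1$ and $H_2$ are both separable models of $IHS_\pi$, they share the same list $W_1,\dots,W_m$ of infinite-multiplicity irreducibles (which is an invariant of $IHS_\pi$), and in each $K_i^\perp$ every $W_j$ with $1\le j\le m$ occurs with multiplicity exactly $\aleph_0$ (by Remark \ref{rm:infdim} and the saturation of $(H_i,\pi_i)$ over $(K_i,\pi_i\!\!\restriction_{K_i})$). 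Thus $K_1^\perp\cong K_2^\perp$ as $\mathcal{L}_\pi$-structures.

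Finally I would glue the two isomorphisms by setting
\[
\phi(v) \;=\; \phi_K(\mathrm{Pr}_{K_1}(v)) + \phi_\perp(\mathrm{Pr}_{K_1^\perp}(v)),
\]
which is a Hilbert space isometric isomorphism $H_1\to H_2$ intertwining the actions of $\pi_1$ and $\pi_2$ (since both summands separately do so, and both $K_1$ and $K_1^\perp$ are $\pi_1$-invariant). To check that $\phi$ is an isomorphism of belles paires I would verify that $\phi(K_1)=K_2$, so for any $v\in H_1$,
\[
P_2(\phi(v)) = \mathrm{dist}(\phi(v),K_2) = \mathrm{dist}(v,K_1) = P_1(v),
\]
which is exactly preservation of the new predicate.

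The main obstacle, and really the only substantive point beyond bookkeeping, is the existence of $\phi_\perp$, i.e.\ controlling $K_i^\perp$ up to isomorphism. This is where saturation of the pair matters: without it, $K_i^\perp$ might fail to contain $\aleph_0$ copies of each $W_j$ for $j\le m$, and one would not get a well-defined isomorphism type. Lemma \ref{BeautifulFIniteLemma} together with Remark \ref{rm:infdim} handles this, so once those are in place the gluing and verification that $P$ is preserved are routine.
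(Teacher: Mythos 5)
Your proposal is correct and follows essentially the same route as the paper's proof: decompose each pair into the small model and its orthogonal complement, use $\Ao$-categoricity of $IHS_\pi$ for the small models, invoke Lemma \ref{BeautifulFIniteLemma} and Remark \ref{rm:infdim} to identify the complements, and glue. Your writeup just makes explicit the gluing map and the verification that the predicate $P$ is preserved, which the paper leaves implicit.
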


\begin{proof}

Let $((K_1,\rho),K_2)$ and $((H_1,\pi),H_2)$ be two separable models of the theory $T_{\pi P}$. Then, by Lemma \ref{BeautifulFIniteLemma} and Remark \ref{rm:infdim} we have that $(K^\perp_2, \rho\!\!\restriction_{K^\perp_2})$ and $(H^\perp_2,\pi\!\!\restriction_{H^\perp_2})$ are isomorphic as representations. Additionally, the models $(K_2,\rho\!\!\restriction_{K_2})$
and $(H_2,\pi\!\!\restriction_{H_2})$ are isomorphic as they are separable models of $IHS_\pi$. Then, the expansions $((K_1,\rho),K_2)$ and $((H_1,\pi),H_2)$ are also isomorphic.
\end{proof}

\begin{cor}
The theory $IHS_\pi$ has SFB.
\end{cor}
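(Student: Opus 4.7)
The plan is to simply assemble the ingredients already proved in the section and invoke the characterization of SFB in terms of belles paires. Specifically, I would verify the three hypotheses needed to apply Fact \ref{fact:SBFEquivalence}: the theory $IHS_\pi$ is stable, it is $\aleph_0$-categorical, and the associated theory $T_{\pi P}$ of belles paires is $\aleph_0$-categorical.

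First I would note that stability of $IHS_\pi$ is given by Theorem \ref{stabilityGfinite} (in fact $\aleph_0$-stability, which is more than enough), and $\aleph_0$-categoricity of $IHS_\pi$ is the content of Corollary \ref{cor:wCategoricityGFinite}. These two facts put us in the setting where Fact \ref{fact:SBFEquivalence} applies: for a stable continuous $T$, $T_P$ is $\aleph_0$-categorical if and only if $T$ is $\aleph_0$-categorical and SFB.

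Next I would invoke Theorem \ref{BellePaires-cat}, which states precisely that $T_{\pi P}$ is $\aleph_0$-categorical. Combining this with the stability of $IHS_\pi$ and the ``if and only if'' in Fact \ref{fact:SBFEquivalence}, one direction of the equivalence gives that $IHS_\pi$ is SFB, which is the desired conclusion.

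There is essentially no obstacle here, since the hard work (the analysis of separable models, the description of the orthogonal complement of an $\aleph_0$-saturated elementary submodel via Lemma \ref{BeautifulFIniteLemma}, and the resulting categoricity of $T_{\pi P}$) has already been carried out. The corollary is purely a matter of citing Fact \ref{fact:SBFEquivalence} with $T=IHS_\pi$ and $T_P=T_{\pi P}$, so the proof should be a single short paragraph with the three references.
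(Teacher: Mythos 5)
Your proposal is correct and matches the paper's own proof, which likewise deduces the corollary directly from Fact \ref{fact:SBFEquivalence} together with Theorem \ref{BellePaires-cat}; you merely spell out the auxiliary hypotheses (stability from Theorem \ref{stabilityGfinite} and $\aleph_0$-categoricity from Corollary \ref{cor:wCategoricityGFinite}) that the paper leaves implicit.
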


\begin{proof}
  The result follows from Fact \ref{fact:SBFEquivalence} and 
  Theorem \ref{BellePaires-cat}.
\end{proof}

 One can change perspective and follow the ideas from \cite{ItaiIsaac} and consider actions by compact groups instead of finite groups and generalize results of $G$-actions to that setting. A natural starting point would be:

\begin{ques} Assume $G$ is a compact group.
     Can one characterize again the existentially closed expansions in terms of the left regular representations? Do irreducible representations play the same role in this setting as they did for finite groups?
\end{ques}

\section{Hilbert spaces expanded by a representation of infinite groups}\label{rep:infinite}

In this section $G$ will denote a discrete infinite countable group and we will fix $\parl{g_n}_{n \in \Na}$ an enumeration of $G$. Additionally, $H$ will be a infinite dimensional Hilbert space, and $\pi : G \longrightarrow U(H)$ will denote a unitary representation of $G$. In this setting, we first give some examples where the theory $IHS_\pi$ is either $\Ao$-categorical or only $\Ao$-categorical up to perturbations. Then, we prove the general result for $IHS_\pi$, which states that regardless of the nature of $G$ or $\pi$, the theory $IHS_\pi$ is $\Ao$-categorical up to perturbations. Finally, we prove that when we also assume that $IHS_\pi$ is model complete, then $IHS_\pi$ is $\Ao$-stable up to perturbations. 

\begin{example}\label{finitecostumoofinfinite} Suppose that $\pi : G \longrightarrow U(H)$ has finite image. Then, the isomorphism $\quotient{G}{\ker(\pi)} \cong$ Im$(\pi)$, implies that the unitary irreducible representations of $\pi$ are in correspondence with the irreducible representations of the group $\quotient{G}{\ker(\pi)}$. In this case, we can apply the results from the previous section and by Theorem \ref{GFinitioAocategoricidad} the theory $IHS_\pi$ is $\Ao$-categorical. 

\end{example}

On the other hand, having nonempty continuous spectrum (see  Definition \ref{Defn:spectrum} and the corresponding notation) in one of the operators belonging to the representation of $G$ over $H$ allows us to construct two separable non-isomorphic models.
\begin{prop}

Let $(H_1,\pi_1)$ be a separable model of $IHS_\pi$. Suppose that there is $g \in G$ such that $\sigma(\pi_1(g))\setminus\sigma_{\text{p}}(\pi_1(g))\not=\emptyset$. Then the theory $IHS_\pi$ is not $\Ao$-categorical.

\end{prop}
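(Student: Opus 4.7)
The plan is to use saturation to convert the approximate eigenvectors coming from the continuous spectrum into a genuine eigenvector in a new separable model, and then observe that the existence of a genuine $\lambda$-eigenvector for $\pi(g)$ is an isomorphism invariant, while the hypothesis guarantees no such eigenvector exists in $(H_1,\pi_1)$.

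Concretely, first fix the $g\in G$ given by the hypothesis and pick $\lambda\in\sigma(\pi_1(g))\setminus\sigma_p(\pi_1(g))$. Since $\pi_1(g)$ is unitary, it is normal, so Fact \ref{Unitarioespectroresidual} yields $\lambda\in\sigma_c(\pi_1(g))$. By the definition of the continuous spectrum, there is a sequence of unit vectors $\{v_n\}_{n\in\Na}\subseteq H_1$ with $\|\pi_1(g)v_n-\lambda v_n\|\to 0$ (the standard approximate-eigenvector construction for elements of the continuous spectrum of a normal operator). Consider the partial type
\[
p(x)\;:=\;\{\,\|x\|=1\,\}\cup\Bigl\{\,\|\pi(g)x-\lambda x\|\le \tfrac1n\;:\;n\in\Na\Bigr\}.
\]
The sequence $\{v_n\}$ witnesses that $p$ is finitely satisfiable in $(H_1,\pi_1)$, so $p$ is consistent with $IHS_\pi$.

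Next, take an $\aleph_0$-saturated elementary extension $(H_2,\pi_2)\succcurlyeq (H_1,\pi_1)$. By saturation, $p$ is realized by some $v\in H_2$; this $v$ is a genuine unit eigenvector for $\pi_2(g)$ with eigenvalue $\lambda$, i.e.\ $\pi_2(g)v=\lambda v$. By downward Löwenheim--Skolem in continuous logic, choose a separable elementary submodel $(H_3,\pi_3)\preccurlyeq (H_2,\pi_2)$ with $v\in H_3$. Then $(H_3,\pi_3)\models IHS_\pi$ is separable and $\lambda\in\sigma_p(\pi_3(g))$.

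Finally, I claim $(H_1,\pi_1)\not\cong(H_3,\pi_3)$ as $\mathcal{L}_\pi$-structures. Indeed, any $\mathcal{L}_\pi$-isomorphism $\Phi\colon H_3\to H_1$ would be a Hilbert space isometry intertwining $\pi_3(g)$ and $\pi_1(g)$, so $\Phi(v)$ would be a unit vector with $\pi_1(g)\Phi(v)=\lambda\Phi(v)$, contradicting $\lambda\notin\sigma_p(\pi_1(g))$. Hence $IHS_\pi$ has two non-isomorphic separable models and is not $\aleph_0$-categorical. The only delicate point is ensuring that the unit vector witnessing the punctual eigenvalue can be placed inside a separable model, which is exactly what the saturation-plus-Löwenheim--Skolem combination provides; the rest is routine.
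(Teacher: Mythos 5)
Your argument is correct and is essentially the paper's proof: the paper realizes the approximate eigenvectors as a genuine $\lambda$-eigenvector by passing to an ultrapower $\Pi_{n,\mathcal{F}}(H_1,\pi_1)$, whereas you realize the corresponding partial type in an $\aleph_0$-saturated elementary extension, and these are interchangeable devices; the subsequent L\"owenheim--Skolem step and the observation that an $\mathcal{L}_\pi$-isomorphism transports unit eigenvectors are identical. The only point worth a footnote is that $\lambda$ need not be among the scalars $e^{i\theta}$, $\theta\in 2\pi\Qu$, of the language, so the conditions $\|\pi(g)x-\lambda x\|\le \frac{1}{n}$ should be read as conditions on a definable predicate (obtained by approximating $\lambda$ by scalars in the language), which changes nothing in substance.
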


\begin{proof}
Let $g \in G$ be as in the hypothesis. By Fact \ref{Unitarioespectroresidual} there is $\lambda \in \sigma_c(\pi(g))$ and thus we can find a sequence $\Parl{v_n}_{n \in \Na} \subseteq H$ of normal vectors such that 
\[
\Lim{n}{\infty} \Norm{\pi_1(g)v_n - \lambda v_n}_2 = 0.
\]
Let $\mathcal{F}$ be a non-principal ultrafilter over $\mathbb{N}$ and define $\mathcal{M} := \Pi_{n,\mathcal{F}}(H_1,\pi_1)$. Then, the element $[(v_n)_n]$ is normal and satisfies 
\[
\pi^{\mathcal{M}}(g)[(v_n)_n] = [(\pi_1(g)v_n)_n] = \lambda [(v_n)_n].
\]
Thus, $\lambda\in \sigma_p(\pi^{\mathcal{M}}(g))$. By L\"owenheim-Skolem there exists a separable model $(H_2,\pi_2)$ of $IHS_\pi$ where $\lambda$ is in the punctual spectrum of $\pi_2(g)$. Then the representations $(H_1,\pi_1)$ and $(H_2,\pi_2)$ of $G$ are not isomorphic, so the theory is not $\Ao$-categorical.
\end{proof}

Modulo perturbations, we get a simpler picture that does not depend on the spectrum of the operators $\pi(g_n)$. We start with a technical lemma:

\begin{lem}\label{lem:definabilityofoperators}

Let $(H,\pi)$ be a model of $IHS_\pi$, and for each $g_n \in G$ define $U_n := \pi(g_n)$. Also, let $\mathcal{A}$ be the C$^*$-algebra generated by $\parl{U_n}_{n \in \Na}$. Then all the operators in $\mathcal{A}$ are definable in the language $\mathcal{L}_\pi$.

\end{lem}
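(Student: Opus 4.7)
The plan is to proceed in two steps: first show that the $*$-subalgebra $\mathcal{A}_0$ algebraically generated by $\{U_n\}_{n \in \mathbb{N}}$ inside $\mathcal{A}$ consists of operators that are $\mathcal{L}_\pi$-definable (indeed, approximable to arbitrary operator-norm precision by $\mathcal{L}_\pi$-terms), and then extend to the full $C^*$-algebra $\mathcal{A} = \overline{\mathcal{A}_0}$ by invoking closure of the class of definable functions in continuous logic under uniform limits on bounded sets.

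For the first step, note that each $U_n = \pi(g_n)$ is unitary, so $U_n^* = \pi(g_n^{-1})$, and both appear as function symbols in $\mathcal{L}_\pi$. The homomorphism property axiomatized in $IHS_\pi$ collapses any monomial in the $U_n$ and $U_n^*$ to a single $\pi(g)$, so a generic element of $\mathcal{A}_0$ has the form $\sum_{i=1}^{N} \alpha_i \pi(h_i)$ with $\alpha_i \in \mathbb{C}$ and $h_i \in G$. The scalars expressible from $\mathcal{L}$ via $-$, $\dot{2}$, $\frac{x+y}{2}$, and $e^{i\theta}$ for $\theta \in 2\pi\mathbb{Q}$ range over the set $\{q e^{i\theta} : q \in \mathbb{Q},\, \theta \in 2\pi\mathbb{Q}\}$, which is dense in $\mathbb{C}$. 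Hence, approximating each $\alpha_i$ by such an expressible scalar, every element of $\mathcal{A}_0$ is an operator-norm limit of $\mathcal{L}_\pi$-terms.

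For the second step, take an arbitrary $T \in \mathcal{A}$ and a sequence $T_k \in \mathcal{A}_0$ with $\|T - T_k\|_{\mathrm{op}} \to 0$; a diagonalization combined with step one produces a sequence of $\mathcal{L}_\pi$-terms $t_k$ with $\|T - t_k\|_{\mathrm{op}} \to 0$, so in particular $\sup_{\|v\| \le r} \|T(v) - t_k(v)\| \to 0$ for every $r \ge 0$. This is precisely the uniform-approximation-on-balls condition that characterizes definability in continuous logic, so $T$ is $\mathcal{L}_\pi$-definable.

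The main technical point to handle carefully is the uniform modulus of continuity when passing to the limit: if the operator norms $\|t_k\|_{\mathrm{op}}$ were unbounded, the moduli $\Delta_k(\epsilon)=\epsilon/\|t_k\|_{\mathrm{op}}$ would degenerate and one could not conclude definability. This is avoided here because $\|T-t_k\|_{\mathrm{op}} \to 0$ forces $\|t_k\|_{\mathrm{op}} \to \|T\|$, so the approximating terms share a common modulus of uniform continuity on any fixed ball, and the density-plus-closure argument goes through.
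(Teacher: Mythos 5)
Your proof is correct and follows essentially the same route as the paper's: collapse monomials in the $U_n$ and their adjoints to finite linear combinations of the $\pi(g)$'s, note these are definable, and pass to operator-norm limits via the uniform convergence of $\|T_kx-y\|$ to $\|Tx-y\|$ on the unit ball. Your additional care with the expressible scalars and the shared modulus of uniform continuity tightens details the paper's proof glosses over, but the argument is the same.
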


\begin{proof}

We denote by $\mathcal{A}_0$ the $*$-algebra generated by $\parl{U_n}_{n \in \Na}$. Since the product of $U_k$ with $U_m$ is in $\parl{U_n}_{n \in \Na}$, any element $T \in \mathcal{A}_0$ can be expressed as $T = \sum_{i=1}^n \lambda_iU_{i}$ and thus it is definable in $\mathcal{L}_{\pi}$. Observe that if we take the topological closure of $\mathcal{A}_0$ in the operator topology, we obtain $\mathcal{A}$. Since the topology in $B(H)$ is the generated by the norm:
\[
\norm{T} = \sup_{\norm{x}\leq 1} \|Tx\| \text{ where } T \in B(H),
\]
if $T \in \mathcal{A}$, then $T$ is the limit of sums of the form $T_m = \sum_{i=1}^m \lambda_iU_{i}$. Thus, we can write $T = \sum_{i=1}^\infty \lambda_iU_{i}$ which satisfy for every $x, y$ in the unit ball of $H$ the following
\[
|\|T_mx - y\| -  \|Tx - y\||\leq  \|Tx - T_mx\| \leq \|T - T_m\|.
\]
Hence the sequence $\parl{\norm{T_mx - y}}_{m \in \Na}$ converges uniformly to $\norm{Tx - y}$ in the unit ball of $H$. Thus, the function $T : H \longrightarrow H$ is definable.\\ 

\end{proof}

For the next results, recall Definition 
\ref{defn:pertubations} and Remark \ref{rem:satimpliesstab}.

\begin{thm}\label{AocategoriciadsalvopGtimesG}

The theory $IHS_\pi$ is $\Ao$-categorical up to perturbations.

\end{thm}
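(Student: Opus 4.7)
The plan is to deduce the result from Voiculescu's approximate unitary equivalence theorem (Fact \ref{fact:unitarilyapproximatelyequivalentwithcompacts}). Let $(H_1,\pi_1)$ and $(H_2,\pi_2)$ be two separable models of $IHS_\pi$. First I would produce a common $C^*$-algebra: let $\mathcal{A}_i\subseteq B(H_i)$ be the $C^*$-algebra generated by $\{\pi_i(g_n)\}_{n\in\mathbb{N}}$. For any polynomial $A=\sum_{j} \lambda_j g_{n_j}\in \mathbb{C}[G]$ the operator norm $\|\pi_i(A)\|=\sup_{\|v\|\leq 1}\|\sum_{j} \lambda_j \pi_i(g_{n_j})v\|$ is the value of an $\mathcal{L}_\pi$-sentence, hence agrees in both models by elementary equivalence. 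So $\pi_1$ and $\pi_2$ induce the same $C^*$-norm on $\mathbb{C}[G]$, and the completions are canonically isomorphic to a common separable $C^*$-algebra $\mathcal{A}$; both representations are unital, hence non-degenerate.

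Next I would verify the rank criterion $\operatorname{rank}(\pi_1(A))=\operatorname{rank}(\pi_2(A))$ for every $A\in\mathcal{A}$. By Lemma \ref{lem:definabilityofoperators}, each $\pi_i(A)$ is definable in $\mathcal{L}_\pi$ uniformly across models, since the approximating series has coefficients determined by $A$ as an abstract element of $\mathcal{A}$. For each $n$, the assertion ``$\operatorname{rank}(\pi_i(A))\leq n$'' is equivalent to the vanishing of the $\mathcal{L}_\pi$-formula
\[
\inf_{e_1,\ldots,e_n\in B_H}\max\Bigl(\sum_{\ell}\bigl|\,\|e_\ell\|-1\,\bigr|+\sum_{\ell<j}|\langle e_\ell,e_j\rangle|,\; \sup_{\|v\|\leq 1}\bigl\|\pi_i(A)v-\sum_{j}\langle\pi_i(A)v,e_j\rangle e_j\bigr\|\Bigr),
\]
whose value is transferred by elementary equivalence. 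Hence the ranks agree whenever finite, and when infinite both must equal $\aleph_0$ by separability.

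Fact \ref{fact:unitarilyapproximatelyequivalentwithcompacts} then yields unitaries $\mathcal{O}_k\colon H_1\to H_2$ with $\pi_2(A)=\lim_k \mathcal{O}_k\pi_1(A)\mathcal{O}_k^*$ for every $A\in\mathcal{A}$. Given $\varepsilon>0$, since $\|\mathcal{O}_k\pi_1(g_n)\mathcal{O}_k^*-\pi_2(g_n)\|\leq 2$ for all $n,k$, I would pick $N$ with $\sum_{n>N}2^{1-n}<\varepsilon/2$, and then choose $k$ large enough that $\|\mathcal{O}_k\pi_1(g_n)\mathcal{O}_k^*-\pi_2(g_n)\|<\varepsilon/2$ for each $n\leq N$. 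The unitary $U=\mathcal{O}_k$ then satisfies $\sum_{n}2^{-n}\|U\pi_1(g_n)U^{-1}-\pi_2(g_n)\|\leq \varepsilon$, producing the required $\varepsilon$-perturbation.

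The main obstacle is the uniform first-order encoding of rank in step two: one must ensure that the ``rank $\leq n$'' formula is a bona fide $\mathcal{L}_\pi$-formula for \emph{every} $A\in\mathcal{A}$, not merely for polynomial expressions in the generators. This rests on the uniformity in Lemma \ref{lem:definabilityofoperators}, which guarantees that the series expansion expressing $\pi_i(A)$ in terms of the $\pi_i(g_n)$ has coefficients depending only on $A$ and not on the ambient model of $IHS_\pi$.
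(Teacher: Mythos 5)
Your proposal is correct and follows essentially the same route as the paper: definability of the elements of the generated $C^*$-algebra (Lemma \ref{lem:definabilityofoperators}), a first-order encoding of the rank of each such operator so that elementary equivalence forces $\operatorname{rank}(\pi_1(A))=\operatorname{rank}(\pi_2(A))$, and then Fact \ref{fact:unitarilyapproximatelyequivalentwithcompacts} to obtain approximate unitary equivalence and hence $\varepsilon$-perturbations. You are in fact slightly more explicit than the paper on two points it leaves implicit --- the identification of the two generated $C^*$-algebras via equality of the induced norms on $\mathbb{C}[G]$, and the conversion of AUE into the weighted-sum bound of Definition \ref{defn:pertubations} (where your constants give $3\varepsilon/2$ rather than $\varepsilon$, a harmless slip fixed by taking $\varepsilon/4$).
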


\begin{proof}

Let $(H_1,\pi_1)$ and $(H_2,\pi_2)$ be separable models of $IHS_\pi$. For each $n \in \Na$ define $U_n := \pi_1(g_n)$ and $V_n := \pi_2(g_n)$, also we denote by $\phi$ the $*$-morphism induced by the assignment $\phi(U_n) = V_n$. Let $\mathcal{A}$ and $\mathcal{B}$ be the C$^*$-algebras generated by $\parl{U_n}_{n \in \Na}$ and $\parl{V_n}_{n \in \Na}$ respectively, then the extension $\Phi$ of $\phi$ to $\mathcal{A}$ is a representation of $\mathcal{A}$ in $B(H_2)$ with image equal to $\mathcal{B}$. We are dealing with two representations of $\mathcal{A}$, first $id_{\mathcal{A}}$ the representation that sends each $T\in \mathcal{A}$ to itself, and $\Phi$, the representation induced by $\phi$.  If we prove that for all $T \in \mathcal{A}$ we have that rank$(T) = $ rank$(\Phi(T))$, then by Fact \ref{fact:unitarilyapproximatelyequivalentwithcompacts} the representaions $id_{\mathcal{A}}$ and $\Phi$ are approximately unitarily equivalent. 

Recall that in Lemma \ref{lem:definabilityofoperators} we proved that any $T \in \mathcal{A}$ is definable in the language $\mathcal{L}_\pi$. We will now prove that the rank of the operator $T$ is coded in the theory $IHS_\pi$.\\

\hspace{-0,4cm}\textbf{Case 1:} Suppose rank$(T) = m$. Then, the sentence
\begin{equation}\label{eq:finiterank}
\inf_{v_1\dots v_m}\sup_{v}\max\{\max_{i\leq m}|\|Tv_i\|-1|,\max_{i<j\leq m}|\gen{Tv_i,Tv_j}|\},\|Tv-(\text{Pr}_{Tv_1}v+\ldots +\text{Pr}_{Tv_m}v)\|\} = 0
\end{equation}
is part of the theory $IHS_\pi$, and it axiomatizes rank$(T)=m$.\\

\hspace{-0,4cm}\textbf{Case 2:} Suppose rank$(T)=\Ao$. Consider the following scheme indexed by $m\in \Na^{>0}$
\begin{equation}\label{eq:infiniterank}
\inf_{v_1\dots v_m}\max\{\max_{i\leq m }|\|Tv_i\|-1| , \max_{i<j\leq m}|\gen{Tv_i,Tv_j}|\} = 0
\end{equation}
Then, the scheme \ref{eq:infiniterank} is part of the theory $IHS_\pi$, and axiomatizes rank$(T)=\infty$.\\

\hspace{-0.4cm}The sentence \ref{eq:infiniterank}
and the scheme \ref{eq:finiterank} imply that if $T \in \mathcal{A}$ and $m \in \parl{0,1, \ldots, \Ao}$ is such that rank$(T) = m$,  then rank$(\Phi(T)) = m$, because $\Phi(T)$ is the interpretation of $T$ in $(H_2,\pi_2)$ which models $IHS_\pi$. Hence $id_{\mathcal{A}}$ and $\Phi$ are AUE, implying that $(H_1,\pi_1)$ and $(H_2,\pi_2)$ are approximately isomorphic, thus the theory $IHS_\pi$ is $\Ao$-categorical up to perturbations.

\end{proof}

\begin{thm}\label{Ao-stabilitypertubations}
Assume the theory $IHS_\pi$ is model-complete. Then the theory $IHS_\pi$ is $\Ao$-stable up to perturbations.

\end{thm}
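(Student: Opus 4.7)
By Remark \ref{rem:satimpliesstab}, it suffices to produce, for each separable $(H_0,\pi_0)\models IHS_\pi$, a separable elementary extension $(H_1,\pi_1)$ that is $\aleph_0$-saturated over $(H_0,\pi_0)$ up to perturbations. Let $\mathcal{A}$ denote the separable $C^*$-algebra generated by $\{\pi_0(g_n) : n \in \Na\}$. The plan is to set $(H_1,\pi_1):=(H_0,\pi_0)\oplus (\tilde H,\tilde\pi)$ for a separable non-degenerate representation $(\tilde H,\tilde\pi)$ of $\mathcal{A}$ whose rank function is prescribed as follows: $\mathrm{rank}(\tilde\pi(T))=0$ when $\mathrm{rank}_{H_0}(T)<\infty$, and $\mathrm{rank}(\tilde\pi(T))=\aleph_0$ when $\mathrm{rank}_{H_0}(T)=\aleph_0$ (and $T$ acts nontrivially on the orthogonal complement of $H_0$ in some elementary extension). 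The key point is model completeness: it forces every $T\in\mathcal{A}$ with finite rank in $H_0$ to vanish on the orthogonal complement of $H_0$ in any elementary extension, so one may build $(\tilde H,\tilde\pi)$ as a countable direct sum of such orthogonal complements coming from sufficiently many separable elementary extensions of $(H_0,\pi_0)$. With this choice the rank functions of $\pi_0$ and $\pi_1$ on $\mathcal{A}$ agree, so by Fact \ref{fact:unitarilyapproximatelyequivalentwithcompacts} and Remark \ref{JiontlyAproximatelyEquivalentFamilies} we have $(H_0,\pi_0)\equiv(H_1,\pi_1)$; model completeness then upgrades the natural inclusion to an elementary embedding.

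To verify the saturation property, take $(H_2,\pi_2)\succeq (H_0,\pi_0)$ with $\dim(H_2\cap H_1^\perp)\leq \aleph_0$, so that $H_2$ is separable and splits as $H_2=H_0\oplus H_2'$ where $H_2':=H_2\cap H_0^\perp$. Model completeness again forces $\mathrm{rank}(\pi_2'(T))=0$ whenever $\mathrm{rank}_{H_0}(T)<\infty$, so the rank function of $(\tilde H,\tilde\pi)\oplus (H_2',\pi_2')$ coincides with that of $(\tilde H,\tilde\pi)$. Fact \ref{fact:unitarilyapproximatelyequivalentwithcompacts} then supplies unitaries $O_k:\tilde H\oplus H_2'\to \tilde H$ with $\|O_k(\tilde\pi(T)\oplus \pi_2'(T))O_k^{-1}-\tilde\pi(T)\|\to 0$ for every $T\in \mathcal{A}$. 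Given $\varepsilon>0$, each summand in $\sum_n 2^{-n}\|O_k(\tilde\pi\oplus \pi_2')(g_n)O_k^{-1}-\tilde\pi(g_n)\|$ is bounded by $2$ and tends to zero, so dominated convergence yields some $k_0$ for which this sum is at most $\varepsilon$. Setting $W:H_2'\to\tilde H$ by $W(v):=O_{k_0}(0,v)$ and pairing with the identity on $H_0$ gives an isometric Hilbert space isomorphism $V:H_2\to H_1':=H_0\oplus W(H_2')$ that fixes $H_0$ pointwise and satisfies the desired $\varepsilon$-perturbation bound $\sum_n 2^{-n}\|V\pi_2(g_n)V^{-1}-\pi_1(g_n)|_{H_1'}\|\leq \varepsilon$.

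The principal technical obstacle is that $W(H_2')$ is only approximately $\tilde\pi$-invariant, whereas being an elementary substructure of $(H_1,\pi_1)$ requires $H_1'$ to be exactly invariant under $\pi_1$. To bridge this gap I plan to invoke a perturbation-of-invariant-subspaces argument: at the cost of slightly relaxing $\varepsilon$, replace $W(H_2')$ by a nearby exactly $\tilde\pi$-invariant subspace (for instance the closed invariant hull, whose gap to $W(H_2')$ is controlled by the AUE approximation error from the previous paragraph) and adjust $W$ accordingly; the rank function of $H_0$ plus this invariant subspace still matches that of $(H_0,\pi_0)$, so by model completeness the resulting pair is genuinely an elementary substructure of $(H_1,\pi_1)$ containing $H_0$. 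Once this technical step is in place, Remark \ref{rem:satimpliesstab} concludes that $IHS_\pi$ is $\aleph_0$-stable up to perturbations.
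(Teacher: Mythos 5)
Your proposal follows the same overall architecture as the paper: enlarge $(H_0,\pi_0)$ by a countable direct sum carrying only infinite-rank operators, match rank functions, apply Fact \ref{fact:unitarilyapproximatelyequivalentwithcompacts}, use model completeness to turn inclusions of models into elementary embeddings, and conclude via Remark \ref{rem:satimpliesstab}. But there is a genuine gap in the construction of $(\tilde H,\tilde\pi)$. You prescribe its rank function and then assert its existence as ``a countable direct sum of \ldots orthogonal complements coming from sufficiently many separable elementary extensions.'' The verification in your second paragraph requires $\tilde\pi(T)\neq 0$ for \emph{every} $T\in\mathcal{A}$ that acts nontrivially on the orthocomplement $H_2'$ of \emph{some} elementary extension; if a single such $T$ is missed, then $\operatorname{rank}\bigl(\tilde\pi(T)\oplus\pi_2'(T)\bigr)>0=\operatorname{rank}\bigl(\tilde\pi(T)\bigr)$ and Fact \ref{fact:unitarilyapproximatelyequivalentwithcompacts} no longer applies. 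Since $\mathcal{A}$ is uncountable, it is not clear that countably many extensions can witness all such $T$ simultaneously, and passing to a countable dense subset of $\mathcal{A}$ does not help, because non-vanishing of $\tilde\pi(T)$ is not preserved under the norm limits you would need. The missing idea is exactly Lemma \ref{lem:compactnoncompact}: the operators that can act nontrivially on an orthocomplement of $H_0$ are precisely those with $\pi_0(T)$ non-compact (finite-rank spectral projections of $T^*T$ lie in $\mathcal{A}$, have their rank fixed by the theory, and force every compact $\pi_0(T)$ to vanish on $H_0^{\perp}$ in any elementary extension), and all of these are witnessed simultaneously by the single summand $H_c^{\perp}\subseteq H_0$. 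The paper therefore takes $\tilde H=\bigoplus_{\omega}H_c^{\perp}$, which makes the rank bookkeeping immediate; your construction should be replaced by, or reduced to, this one.

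The obstacle you flag in your last paragraph is real, but your proposed repair is doubtful: the closed $\tilde\pi$-invariant hull of the almost-invariant subspace $W(H_2')$ need not be close to $W(H_2')$ at all (it can easily be all of $\tilde H$), so the claim that its ``gap'' is controlled by the AUE error would require a genuine argument that you do not supply. The paper sidesteps the issue by never asking for the image of $H_2$ to be exactly invariant: the $\varepsilon$-perturbation it produces is the global unitary $\mathrm{id}\oplus\mathcal{O}_\varepsilon^{*}$ between the two enlarged models $(H_0\oplus\tilde H\oplus H_2',\,\cdot\,)$ and $(H_0\oplus\tilde H,\,\cdot\,)$, both genuine models of $IHS_\pi$ containing the relevant structures elementarily, and the $d_{pert}$-density of types over $H_0$ needed for Remark \ref{rem:satimpliesstab} is read off from that diagram. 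Reorganizing your second and third paragraphs along these lines removes the need for any perturbation-of-invariant-subspaces machinery.
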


\begin{proof}
Let $(H,\pi)$ be a separable model of $IHS_\pi$ and let $\mathcal{A}$ be the C$^*$-algebra generated by $\parl{\pi(g_n)}_{n\in \Na}$. By Lemma \ref{lem:compactnoncompact} we can write \[
(H,\mathcal{A})\cong(H_c\oplus H^\perp_c, \mathcal{A}_c\oplus\mathcal{A}_c^\perp), 
\]
where the subalgebra $\mathcal{A}_c$ are composed only by compact operators,
and the subalgebra $\mathcal{A}_c^\perp$ has no compact operators. Also, $\mathcal{A}_c$ and $\mathcal{A}_c^\perp$ act over $H_c$ and $H^\perp_c$ respectively. The algebra $\mathcal{A}_c$ is the topological closure of the $*\text{-algebra}$ generated by the family
\[
\{\text{Pr}_{H_c}\pi(g_n)\!\!\restriction\!\!_{H_c}\}_{n \in \Na}
\]
In the same way, the algebra $\mathcal{A}^\perp_c$ is the topological closure of the $*\text{-algebra}$ generated by the family 
\[
\{ \text{Pr}_{H^\perp_c}\pi(g_n)\!\!\restriction\!\!_{H^\perp_c}\}_{n \in \Na}.
\]
Now, if we write $\pi_c^\perp$ for the restriction of $\pi$ to $H^\perp_c$, we can define the representation $(H_1,\tau)=\bigoplus_{i\in \omega} (H^\perp_c, \pi_c^\perp)$. This representation is a Hilbert space with an action where all operators have rank $\aleph_0$. Finally let $(H\oplus H_1, \pi\oplus \tau)$ be the representation coming from the direct sum.\\

\hspace{-0,5cm} \textbf{Claim  1:} $(H\oplus H_1, \pi\oplus \tau)\models IHS_\pi$.

\hspace{-0,5cm} Let $\mathcal{B}$ the C$^*$-algebra generated by the operators $\parl{\pi\oplus\tau(g_n)}_{n\in \Na}$. We denote by $\phi$ the $*$-morphism induced by the assignment $\phi(\pi(g_n)) = \pi\oplus \tau(g_n)$. The extension $\Phi$ of $\phi$ to $\mathcal{A}$ obtained by linearity and continuity is a representation of $\mathcal{A}$ in $B(H\oplus H_1)$ whose image is $\mathcal{B}$. In this setting we have again two representations of $\mathcal{A}$, first $id_{\mathcal{A}}$ the representation that sends each $T\in \mathcal{A}$ to itself, and $\Phi$.  The subalgebra of compact operators of both algebras $\mathcal{A}$ and $\mathcal{B}$, appear in the copy of $(H_c,\pi_c)$ inside each sum, and by Fact \ref{fact:unitarilyequivalentcompact} they are isomorphic; the non-compact operators, which appear in $(H^\perp_c, \pi_c^\perp)$ and in $(H_1,\tau)$ all have rank $\aleph_0$. It follows by Fact \ref{fact:unitarilyapproximatelyequivalentwithcompacts} that the representations $(H,\pi)$ and $(H\oplus H_1,\pi\oplus \tau)$ are AUE and thus satisfy the same theory $IHS_\pi$.

Since $IHS_\pi$ is model complete, we have 
\[
(H, \pi)\preccurlyeq (H\oplus H_1, \pi\oplus \tau).
\]
We will prove that the elementary superstructure 
$(H\oplus H_1, \pi\oplus \tau)$ is $\aleph_0$-saturated up to perturbations over $(H,\pi)$ and thus, since $(H,\pi)$ was any separable model of $IHS_\pi$, this shows that $IHS_\pi$ is $\aleph_0$-stable up to perturbations.\

\hspace{-0.4cm}Let $(K,\rho) \succcurlyeq (H, \pi)$ be an elementary separable superstructure. As in Claim 1, we also have that $(K\oplus H_1, \rho\oplus \tau)\models IHS_\pi$ and since $IHS_\pi$ is model-complete, $(K, \rho)\preccurlyeq(K\oplus H_1, \rho\oplus \tau)$. By construction of $(K\oplus H_1, \rho\oplus \tau)$, we can write $K\oplus H_1 \cong H \oplus H^\perp$ and $\rho\oplus \tau \cong \pi \oplus \rho'$, where $\rho' = \rho\!\!\upharpoonright_{H^\perp}$. By construction, the C$^*$-algebras induced by the representations $\tau$ and $\rho'$ over $H_1$ and $H^\perp$ respectively, are free of compact operators.
We get again using Fact \ref{fact:unitarilyapproximatelyequivalentwithcompacts} that these two representations are approximately unitarily equivalent and so for every $\varepsilon > 0$ there is a unitary map $\mathcal{O}_\varepsilon:H_1 \longrightarrow H^\perp$ such that 
$\lim_{\varepsilon\to 0}
\Norm{\pi_2(g) - \mathcal{O}_\varepsilon\pi_1(g)\mathcal{O}^*_\varepsilon } =0$ for each $g\in G$. Then for all $\varepsilon > 0$ we have the following diagram

\begin{center}
\begin{tikzcd}
  (H\oplus H_1,\pi\oplus \tau)  & (H\oplus H^\perp,\pi\oplus \rho') \ar[l, dashed, swap, "id\oplus \mathcal{O}^*_\varepsilon"] \\
  (H,\pi) \ar{u}{\rotatebox[origin=c]{90}{$\preceq$}} \ar{r}[swap]{\preceq} & (K,\rho)  \ar{u}[swap]{\rotatebox[origin=c]{90}{$\preceq$}}
\end{tikzcd}
\end{center}
where the map $id$ is the identity map over $H$. 

\end{proof}

\begin{obse}
    Let $G$ be a finite group, and let $\pi$ be a representation of $G$ on an infinite dimensional Hilbert space $H$. Following the notation of section \ref{rep:finite} we can write $H=H^{\text{fin}}\oplus H^{\text{inf}}$. Using the notation from Theorem \ref{Ao-stabilitypertubations}, we have $H_c=H^{\text{fin}}$ and $H_c^\perp=H^{\text{inf}}$.
\end{obse}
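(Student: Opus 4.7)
The plan is to show the two equalities by identifying the subalgebra $\mathcal{A}\cap K(H)$ of compact operators inside the generated C$^*$-algebra $\mathcal{A}$ and then observing that the construction of $H_c$ in Lemma \ref{lem:compactnoncompact} (namely, from the proof of Corollary II.5.9 in \cite{ken}) gives exactly $H_c=\overline{(\mathcal{A}\cap K(H))H}$.

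First, since $G$ is finite, $\mathcal{A}$ is just the $*$-algebra linearly spanned by $\{\pi(g):g\in G\}$, a finite dimensional $*$-algebra. By Fact \ref{fact:projectionsPierre} and Proposition \ref{ProyeccionesPierreSonDefinibles} the isotypic projections $P_1,\dots,P_k$ are central elements of $\mathcal{A}$, so $\mathcal{A}$ splits as $\mathcal{A}=P_1\mathcal{A}\oplus\dots\oplus P_k\mathcal{A}$, and each summand acts nontrivially only on its corresponding isotypic component ($H_i$ for $1\leq i\leq m$, and $V_i$ for $m+1\leq i\leq k$).

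Next I would identify the compact part componentwise. For $m+1\leq i\leq k$ the component $V_i$ is finite dimensional, so every operator in $P_i\mathcal{A}$ is automatically compact. For $1\leq i\leq m$, write $H_i\cong W_i\otimes K_i$ where $K_i$ is a separable infinite dimensional multiplicity space; by Schur's lemma the elements of $P_i\mathcal{A}$ act as $T'\otimes I_{K_i}$ for $T'\in \mathrm{End}(W_i)$, and such an operator is compact precisely when $T'=0$. Hence $\mathcal{A}\cap K(H)=P_{m+1}\mathcal{A}\oplus\dots\oplus P_k\mathcal{A}$, whose image in $H$ is exactly $V_{m+1}\oplus\dots\oplus V_k=H^{\text{fin}}$ (the projections $P_i$ themselves lie in this ideal and project onto $V_i$).

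Finally, the essential subspace of the restriction of $\pi$ to $\mathcal{A}\cap K(H)$, which is $\overline{(\mathcal{A}\cap K(H))H}$, equals $H^{\text{fin}}$. Since this is precisely the subspace $H_c$ produced in the proof of Lemma \ref{lem:compactnoncompact}, we conclude $H_c=H^{\text{fin}}$ and $H_c^\perp=H^{\text{inf}}$. The only subtle step is the direction that nothing on $H^{\text{inf}}$ contributes to $H_c$; this is where the Schur-lemma tensor decomposition $T'\otimes I_{K_i}$ is essential, since it rules out any nontrivial compact operator in $P_i\mathcal{A}$ for $1\leq i\leq m$.
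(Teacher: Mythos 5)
Your argument is correct. The paper states this observation without proof, so there is no official argument to compare against; your write-up supplies precisely the justification the authors leave implicit. The key steps are all sound: $\mathcal{A}$ is the finite dimensional $*$-algebra spanned by $\{\pi(g):g\in G\}$ and splits along the central projections $P_i$; on an infinite isotypic component $H_i\cong W_i\otimes K_i$ every element of $P_i\mathcal{A}$ acts as $T'\otimes I_{K_i}$ and so is compact only when it vanishes, while on the finite dimensional components everything is compact; hence $\mathcal{A}\cap K(H)=\bigoplus_{i=m+1}^{k}P_i\mathcal{A}$, and since the finite-rank projections $P_{m+1},\dots,P_k$ lie in this ideal, its essential subspace $\overline{(\mathcal{A}\cap K(H))H}$ is exactly $V_{m+1}\oplus\cdots\oplus V_k=H^{\text{fin}}$. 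Identifying $H_c$ with this essential subspace is indeed how the decomposition of Lemma \ref{lem:compactnoncompact} is produced in the proof of Corollary II.5.9 of \cite{ken}, so the conclusion $H_c=H^{\text{fin}}$, $H_c^{\perp}=H^{\text{inf}}$ follows. One minor remark: Schur's lemma is slightly more than you need for the containment of $P_i\mathcal{A}$ restricted to $H_i$ inside $\mathrm{End}(W_i)\otimes I_{K_i}$ --- this already follows from the diagonal form of the $G$-action on an isotypic component --- but invoking it does no harm.
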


\begin{example}
Let $G$ be a countable amenable group, and consider
$Th(\infty \ell_2(G),\infty \lambda_G)$. By Theorem 2.8 in \cite{AlexHGGA} this theory is the model companion of the theory of $G$-representations and thus it is model complete. By Theorem \ref{Ao-stabilitypertubations} we get that $Th(\infty \ell_2(G),\infty \lambda_G)$ is $\Ao$-stable up to perturbations; it was already known by \cite[Section 3]{AlexHGGA} that it is superstable.

The especial case where $G=\mathbb{Z}$ was
considered in \cite{zadka} and the model companion was characterized as the collection of expansions $(H,\tau(n):n\in \mathbb{Z})$ where the spectrum of $\tau(1)$ is $S^1$. In \cite{itai} it was proved that this expansion is $\aleph_0$-stable up to perturbations, a especial case
of Theorem \ref{Ao-stabilitypertubations}.
\end{example}



\begin{thebibliography}{99}


\bibitem{Argoty}{\sc C. Argoty, A. Berenstein}, {\rm Hilbert spaces with Unitary operators}, Mathematical Logic Quarterly, Volume 55, 2009, pp 37--50.

\bibitem{itai2}{\sc I. Ben Yaacov}, {\rm Modular functionals and perturbations of Nakano spaces}, Journal of Logic and Analysis, Volume 1 (1), 2009, pp 1--42.

\bibitem{BY-perturbations}{\sc I. Ben Yaacov}, {\rm On perturbations of continuous structures},
Journal of Mathematical Logic, Volume 8 (2), 2008, pp 225--249.

\bibitem{itai}{\sc I. Ben Yaacov, A. Berenstein}, {\rm On perturbations of Hilbert spaces and probability algebras with a generic automorphism},
Journal of Logic and Analysis, Volume 1 (7), 2009, pp 1--18.

\bibitem{BYBH}{\sc I. Ben Yaacov, A. Berenstein, C. W. Henson}, {\rm Almost indiscernible sequences and convergence of canonical bases},
Journal of Logic and Analysis, Volume 79, 2013, pp 460--484.

\bibitem{AlexMT}{\sc I. Ben Yaacov, A. Berenstein, C. W. Henson, A. Usvyatsov}, {\rm Model theory for metric structures},
London Mathematical Society Lecture Notes Series, Volume 2, 2008, pp 315--427.

\bibitem{ItaiIsaac}{\sc I. Ben Yaacov, I. Goldbring}, {\rm Unitary Representations of Locally Compact Groups as Metric Structures},
Notre Dame J. Formal Logic, Volume 64(2), 2023, pp 159--172.

\bibitem{ItaiStability}{\sc I. Ben Yaacov, A. Usvyatsov}, {\rm Continuous first order logic and local stability},
Transactions of the American Mathematical Society, Volume 362, 2010, pp 5213--5259.


\bibitem{zadka}{\sc I. Ben Yaacov, A. Usvyatsov, M. Zadka}, {\rm Generic automorphism of a Hilbert space},
 2008.





\bibitem{AlexHGGA}{\sc A. Berenstein}, {\rm Hilbert spaces with generic groups of automorphisms},
Archive for Mathematical Logic, Volume 46, 2007, pp 289--299.

\bibitem{BHV}{\sc A. Berenstein, T. Hyttinen, A. Villaveces}, {\rm Hilbert spaces with generic predicates},
Revista Colombiana de Matem\'aticas, Volume 52 nro 1, 2018, pp 107--130.

\bibitem{ken}{\sc K. Davidson}, {\rm C*-Algebras by Example},
American Mathematical Society, 1996.

\bibitem{tom}{\sc J. Hanson, T. Ibarluc\'ia}, {\rm Approximate Isomorphism Of Randomization Pairs},
Confluentes Math, Volume 14 (2), 2022, pp 29--44.

\bibitem{PoiBelle}{\sc B. Poizat}, {\rm On perturbations of continuous structures},
Journal of Symbolic  Logic, Volume 48, 1983, pp 239--249.

\bibitem{pierre}{\sc J.P. Serre}, {\rm Linear Representations of Finite Groups},
Springer, 1971.



\bibitem{naylor}{\sc A. Taylor, G. Sell}, {\rm Linear Operator Theory in Engineering and Sciences},
Springer, 1982.

\end{thebibliography}
\end{document}